\newcommand{\pref}[1]{(\ref{#1})}
\newcommand{\be}{\begin{equation}}
\newcommand{\ee}{\end{equation}}
\newcommand{\qed}{{\unskip\nobreak\hfil\penalty50\quad\null\nobreak\hfil
	$\square$\parfillskip0pt\finalhyphendemerits0\par\medskip}}
\newcommand{\zume}{\!\!\!}
\newcommand{\veca}{\mbox{\boldmath $ a $}}
\newcommand{\vecb}{\mbox{\boldmath $ b $}}
\newcommand{\vecc}{\mbox{\boldmath $ c $}}
\newcommand{\vecd}{\mbox{\boldmath $ d $}}
\newcommand{\vecf}{\mbox{\boldmath $ f $}}
\newcommand{\vecu}{\mbox{\boldmath $ u $}}
\newcommand{\vectau}{\mbox{\boldmath $ \tau $}}
\renewcommand{\epsilon}{\varepsilon}
\newtheorem{defi}{Definition}[section]
\newtheorem{thm}{Theorem}[section]
\newtheorem{prop}{Proposition}[section]
\newtheorem{lem}{Lemma}[section]
\newtheorem{cor}{Corollary}[section]
\newtheorem{fact}{Fact}[section]
\newtheorem{proof}{\normalfont\itshape Proof.}
\title{A M\"{o}bius invariant discretization
and decomposition of the M\"{o}bius energy}
\author{Simon Blatt\thanks{Paris Lodron Universit\"{a}t Salzburg,
Austria,
e-mail: Simon.Blatt@sbg.ac.at},
Aya Ishizeki\thanks{Chiba University,
Japan,
e-mail:
a.ishizeki@chiba-u.jp,
supported by KAKENHI (17J01429)}
\ \&
Takeyuki Nagasawa\thanks{Saitama University,
Japan,
e-mai:
tnagasaw@rimath.saitama-u.ac.jp,
supported by KAKENHI (17K05310).}}
\date{}
\begin{document}
\maketitle
\begin{abstract}
The M\"{o}bius energy,
defined by O'Hara,
is one of the knot energies,
and named after the M\"{o}bius invariant property which was shown by 
Freedman-He-Wang.
The energy can be decomposed into three parts,
each of which is M\"{o}bius invariant,
proved by Ishizeki-Nagasawa.
Several discrete versions of M\"{o}bius energy,
that is,
corresponding energies for polygons,
are known,
and it showed that they converge to the continuum version as the number of vertices to infinity.
However already-known discrete energies lost the property of M\"{o}bius invariance,
nor the M\"{o}bius invariant decomposition.
Here a new discretization of the M\"{o}bius energy is proposed.
It has the M\"{o}bius invariant property,
and can be decomposed into the M\"{o}bius invariant components which converge to the original components of decomposition in the continuum limit.
Though the decomposed energies are M\"{o}bius invariant,
their densities are not.
As a by-product,
it is shown that the decomposed energies have alternative representation with the M\"{o}bius invariant densities.
\\
{\bf Keywords}:
M\"{o}bius energy,
M\"{o}bius invariance,
discrete energy
\\
{\bf Mathematics Subject Classification (2010)}:
53A04,  57M25, 57M27, 58J70, 49Q10
\end{abstract}
\section{Introduction}
Let $ \vecf $ be a closed curve in $ \mathbb{R}^n $,
parametrized by the arc-length,
with the total length $ \mathcal L $.
The M\"{o}bius energy
\[
	\mathcal{E} ( \vecf ) =
	\iint_{ ( \mathbb{R} / \mathcal{L} \mathbb{Z} )^2 }
	\left(
	\frac 1 { \| \vecf ( s_1 ) - \vecf ( s_2 ) \|_{ \mathbb{R}^n }^2 }
	-
	\frac 1 { \mathscr{D} ( \vecf ( s_1 ) , \vecf ( s_2 ) )^2 }
	\right)
	d s_1 d s_2
\]
was proposed by O'Hara \cite{OH} to determine the canonical configuration of knots,
that is,
closed curves embedded in $ \mathbb{R}^3 $.
The energy is also defined for curves in $ \mathbb{R}^n $.
In the above formula,
$ \| \vecf ( s_1 ) - \vecf ( s_2 ) \|_{ \mathbb{R}^n } $ and $ \mathscr{D} ( \vecf ( s_1 ) , \vecf ( s_2 ) $ are the extrinsic and intrinsic distance between two points $ \vecf ( s_1 ) $ and $ \vecf ( s_2 ) $ on the curve respectively.
The energy is invariant under the M\"{o}bius transformation,
and this fact played an important role to show the existence of minimizers in each prime knot class \cite{FHW}.
We consider a discrete energy,
that is,
the energy defined for polygons.
There are at least two discrete versions of the energy,
one was defined by Kim-Kusner \cite{KimKusner},
and another was by Simon \cite{Simon}.
These energies converge to the M\"{o}bius energy under the continuum limit (\cite{Rawdon-Simon,Scholtes}),
however they lost the M\"{o}bius invariance (for the definition of M\"{o}bius invariant of discrete energy,
see below,
and it is not hard work to see that Kim-Kusner's energy and Simon's energy lost the M\"{o}bius invariant properties).
Recently the authors proposed a new discrete energy and showed its convergence \cite{BIN}.
This has the M\"{o}bius invariant property.
\par
In this paper,
another discrete M\"{o}bius energy is proposed,
which keeps the M\"{o}bius invariant property and also has the M\"{o}bius invariant decomposition.
The M\"{o}bius energy can be decomposed into
\begin{align*}
	\mathcal{E} ( \vecf )
	= & \
	\mathcal{E}_1 ( \vecf )
	+
	\mathcal{E}_2 ( \vecf )
	+
	4
	,
	\\
	\mathcal{E}_1 ( \vecf )
	= & \
	\frac 12 \iint_{ ( \mathbb{R} / \mathcal{L} \mathbb{Z} )^2 }
	\frac
	{ \| \Delta \vectau \|_{ \mathbb{R}^n }^2 }
	{ \| \Delta \vecf \|_{ \mathbb{R}^n }^2 }
	\,
	d s_1 d s_2
	,
	\\
	\mathcal{E}_2 ( \vecf )
	= & \
	\iint_{ ( \mathbb{R} / \mathcal{L} \mathbb{Z} )^2 }
	\frac 2
	{ \| \Delta \vecf \|_{ \mathbb{R}^n }^2 }
	\left\langle
	\vectau ( s_1 ) \wedge \frac { \Delta \vecf } { \| \Delta \vecf \|_{ \mathbb{R}^n } }
	,
	\vectau ( s_2 ) \wedge \frac { \Delta \vecf } { \| \Delta \vecf \|_{ \mathbb{R}^n } }
	\right\rangle_{ {\bigwedge}^2 \mathbb{R}^n }
	d s_1 d s_2 ,
\end{align*}
where $ \vectau $ is the unit tangent vector,
and the notation $ \Delta $ means
\[
	\Delta v = v ( s_1 ) - v ( s_2 )
\]
for a function on $ \mathbb{R} / \mathcal{L} \mathbb{Z} $.
This was shown by the second and third authors \cite{IshizekiNagasawaI,IshizekiNagasawaIII}.
Each decomposed part is also M\"{o}bius invariant.
The newly proposed discrete energy in this paper can be decomposed into M\"{o}bius invariant components,
and each component converges to the corresponding part of the original decomposition.
\par
Let $ \mathcal{F} $ be a discrete energy for polygons in $ \mathbb{R}^n $,
and let $ T \, : \, \mathbb{R}^n \to \overline { \mathbb{R}^n } $ be a M\"{o}bius transformation.
Since the image $ T(P) $ of a polygon $ P $ is not a polygon,
we cannot define $ \mathcal{F} (T(P)) $.
Hence we define the M\"{o}bius invariance of discrete energies as follows.
\begin{defi}
Let $ \vecf \, : \, \mathbb{R}/\mathbb{Z} \to \mathbb{R}^n $ be a closed curve.
The parameter may not be necessarily arc-length one. 
For $ 0 \leqq \theta_1 < \theta_2 < \cdots < \theta_m < 1 $,
$ P_m ( \vecf , \{ \theta_i \}_{ i=1 }^m ) $ is an $ m $-polygon with the $ i $-th vertex $ \vecf ( \theta_i ) $,
simply denoted by $ P_m ( \vecf ) $ hereafter.
We say that a discrete energy $ \mathcal{F} $ is M\"{o}bius invariant if
\[
	\mathcal{F} ( P_m ( \vecf ) ) = \mathcal{F} ( P_m (T \vecf) )
\]
holds for and all $ \vecf $,
all $ \{ \theta_i \} $,
and all M\"{o}bius transformations $ T $ which maps $ \mathrm{Im} ( \vecf ) $ into $ \mathbb{R}^n $.
\label{def_Moebius_inv}
\end{defi}
\par
The main result of this paper is as follows.
\begin{thm}
There exist energies $ \mathcal{E}^m $,
$ \mathcal{E}_1^m $,
and
$ \mathcal{E}_2^m $ for $ m $-polygons such that
\begin{itemize}
\item
	$ \mathcal{E}^m = \mathcal{E}_1^m + \mathcal{E}_2^m + 4 $,
\item
	they are M\"{o}bius invariant in the sense of Definition \ref{def_Moebius_inv}.
\item
	Assume that $ f \in W^{2,\infty} $ satisfies the bi-Lipschtiz estimate.
	Let $ P_m ( \vecf ) $ be an $ m $-polygon with the $ i $-th vertex $ \vecf ( \theta_i ) $ satisfying the equi-lateral condition.
	Then
	$ \mathcal{E}^m ( P_m ( \vecf ) ) $,
	$ \mathcal{E}_1^m ( P_m ( \vecf ) ) $,
	and
	$ \mathcal{E}_2^m ( P_m ( \vecf ) ) $ converges to
	$ \mathcal{E} ( \vecf ) $,
	$ \mathcal{E}_1 ( \vecf ) $,
	and
	$ \mathcal{E}_2 ( \vecf ) $ as $ m \to \infty $ espectively.
\end{itemize}
\end{thm}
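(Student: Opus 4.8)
\quad The whole construction is organised around the classical fact that M\"obius transformations distort chord lengths only by conformal factors: if $ T $ is a M\"obius transformation of $ \overline{\mathbb{R}^n} $ and $ x, y $ are not sent to $ \infty $, then $ \| Tx - Ty \| = \| x - y \| \, \lambda_T(x)^{-1/2} \lambda_T(y)^{-1/2} $ for a positive function $ \lambda_T $. Hence any ratio of products of chord lengths of a finite point set in which every point occurs with the same total multiplicity upstairs and downstairs is M\"obius invariant; the basic example is the cross-ratio $ \| a-c \| \, \| b-d \| / ( \| a-b \| \, \| c-d \| ) $ of four points and its companions. The plan is to \emph{define} $ \mathcal{E}^m $, $ \mathcal{E}_1^m $, and $ \mathcal{E}_2^m $ as double sums over ordered pairs of non-adjacent edges $ [ \vecf(\theta_i), \vecf(\theta_{i+1}) ] $, $ [ \vecf(\theta_j), \vecf(\theta_{j+1}) ] $ of $ P_m(\vecf) $ --- $ \mathcal{E}^m $ discretising $ \| \Delta \vecf \|^{-2} - \mathscr{D}^{-2} $, $ \mathcal{E}_1^m $ discretising $ \tfrac12 \| \Delta \vectau \|^2 \| \Delta \vecf \|^{-2} $ with a discrete unit tangent attached to each vertex, and $ \mathcal{E}_2^m $ discretising the wedge density --- and then to reorganise each of these sums, by elementary algebra and discrete summation-by-parts, into an equivalent form whose summands are pure cross-ratios of the four vertices $ \vecf(\theta_i), \vecf(\theta_{i+1}), \vecf(\theta_j), \vecf(\theta_{j+1}) $. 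That reorganisation is exactly the alternative representation with M\"obius invariant densities promised in the abstract, and once it is in place the second bullet is immediate: each summand is invariant termwise, and the hypothesis that $ T $ maps $ \mathrm{Im}(\vecf) $ into $ \mathbb{R}^n $ guarantees that no vertex is sent to $ \infty $, so no term degenerates.

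For the decomposition identity $ \mathcal{E}^m = \mathcal{E}_1^m + \mathcal{E}_2^m + 4 $ I would first prove a \emph{termwise} identity: for every admissible pair $ (i,j) $ the $ \mathcal{E}^m $-summand equals the sum of the $ \mathcal{E}_1^m $- and $ \mathcal{E}_2^m $-summands plus a remainder $ R_{ij} $ that is an elementary expression --- a discrete mixed second difference, in the two indices, of $ \log \| \vecf(\theta_i) - \vecf(\theta_j) \| $ minus a discrete intrinsic term. This mirrors the continuum decomposition, where the gap between $ \| \Delta \vecf \|^{-2} - \mathscr{D}^{-2} $ and the densities of $ \mathcal{E}_1 $ and $ \mathcal{E}_2 $ equals $ \partial_{s_1} \partial_{s_2} \log \| \Delta \vecf \| - \mathscr{D}^{-2} $, whose conditionally convergent double integral over $ ( \mathbb{R} / \mathcal{L} \mathbb{Z} )^2 $ equals $ 4 $ because of the diagonal singularity of $ \log \| \Delta \vecf \| $. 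The discrete counterpart is cleaner: a cyclic double sum of a mixed second difference telescopes, so $ \sum_{i,j} R_{ij} $ collapses to a handful of boundary terms supported at the indices adjacent to the excluded near-diagonal block, and summing those over $ i $ in turn collapses to a short product of chord lengths around the diagonal. I expect this leftover to be itself a balanced ratio of chords, hence M\"obius invariant, so it may be evaluated on any convenient polygon --- a regular planar one, or a degenerating family --- giving exactly $ 4 $, independently of $ m $ and of $ \vecf $. Designing $ R_{ij} $ so that this telescoping works and lands precisely on $ 4 $ is the combinatorial core of this step.

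The convergence statement is a Riemann-sum argument. Under the equilateral condition the common edge length $ h_m $ satisfies $ h_m = \mathcal{L}/m + o(1/m) $ and the parameters are quasi-uniform; the bi-Lipschitz estimate gives, off the diagonal, a uniform lower bound $ \| \vecf(\theta_i) - \vecf(\theta_j) \| \geqq c \, \mathrm{dist}_{ \mathbb{R}/\mathbb{Z} }( \theta_i, \theta_j ) $, and $ \vecf \in W^{2,\infty} $ gives the expansions $ \vecf(\theta_{i+1}) - \vecf(\theta_i) = h_m \, \vectau(\theta_i) + O(h_m^2) $ and $ \vecf(\theta_i) - \vecf(\theta_j) = \Delta \vecf + O(h_m) $ with controlled errors. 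Inserting these into the cross-ratio summands of $ \mathcal{E}^m $, $ \mathcal{E}_1^m $, $ \mathcal{E}_2^m $ recovers the continuum densities $ \| \Delta \vecf \|^{-2} - \mathscr{D}^{-2} $, $ \tfrac12 \| \Delta \vectau \|^2 \| \Delta \vecf \|^{-2} $, and the wedge density, each multiplied by $ h_m^2 \approx ds_1 \, ds_2 $; dominated convergence for Riemann sums then yields the three integrals, while the constant $ 4 $ is exact at every level and needs no limiting argument.

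\textbf{Main obstacle.} The delicate point throughout is the behaviour near the diagonal $ \theta_i \approx \theta_j $. There the individual discrete densities blow up, and only the subtraction defining $ \mathcal{E}^m $ (and the matched structure of $ \mathcal{E}_1^m $, $ \mathcal{E}_2^m $) keeps the summands bounded; one must carry the Taylor expansions to sufficiently high order --- this is where the full strength of $ W^{2,\infty} $ and the equilateral condition enter --- to show that the discrete cancellation reproduces the continuum one uniformly, that the near-diagonal contribution stays bounded and converges, and, in the decomposition identity, that the telescoped leftover really equals the constant $ 4 $ rather than some $ m $-dependent quantity. The far-from-diagonal terms, by contrast, are routine: the bi-Lipschitz bound makes every chord length comparable to the arc-length separation, and the sums are uniformly dominated Riemann sums.
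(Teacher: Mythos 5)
Your overall strategy shares the paper's central idea — work with cross-ratios of the four vertices $\vecf(\theta_i),\vecf(\theta_{i+1}),\vecf(\theta_j),\vecf(\theta_{j+1})$ so that Möbius invariance is termwise and automatic, then verify convergence by Riemann-sum estimates under bi-Lipschitz and equilateral hypotheses. But you diverge from the paper in two places, and one of them is a genuine gap.

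First, the decomposition. In the paper the identity $\mathcal{E}^m = \mathcal{E}_1^m + \mathcal{E}_2^m + 4$ is \emph{definitional}: they first build $\mathcal{E}_1^m$ and $\mathcal{E}_2^m$ by replacing the differential operators $\partial_{\theta_1}\partial_{\theta_2}$ by difference operators $\Delta_i\Delta_j$ in the Möbius-invariant-density forms $\widehat{\mathcal{E}}_1,\widehat{\mathcal{E}}_2$ (which use the cross ratio $g_{ij}$), and then simply \emph{declare} $\mathcal{E}^m := \mathcal{E}_1^m + \mathcal{E}_2^m + 4$. So the first bullet carries no content beyond the definitions, and the actual work is to show $\mathcal{E}_1^m\to\mathcal{E}_1$ and $\mathcal{E}^m - 4\to\mathcal{E}-4$. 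You instead propose defining $\mathcal{E}^m$ independently as a discretisation of $\|\Delta\vecf\|^{-2} - \mathscr{D}^{-2}$ and proving the identity by a termwise relation whose remainder $R_{ij}$ telescopes to the constant $4$. That is a much stronger statement. It is not at all clear that such a clean telescoping exists: the intrinsic distance $\mathscr{D}$ is not Möbius invariant, so a discretisation containing $\mathscr{D}$ would not obviously be a balanced chord ratio, and in the continuum the constant $4$ arises as a limit through a near-diagonal regularisation, not from any algebraic telescope. Your suggestion to evaluate the leftover on a regular polygon by invoking its Möbius invariance presupposes exactly what you have not established — that the leftover is a ratio of chords with balanced multiplicities. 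This step, as written, is speculative and would need to be replaced either by the paper's definitional route or by a genuine proof that the combinatorics work out.

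Second, and more importantly, you do not anticipate the obstruction the paper hits head-on: the naive discretisation of the second component, $-\sum_{i\ne j}\frac{1}{2g_{ij}}\det(\cdots)$, \emph{diverges} — already for the regular $m$-gon it tends to $\infty$, whereas $\mathcal{E}(\text{circle}) = 4$. The paper is forced to add the correction term $+1$ inside the braces, i.e. to subtract $\sum_{i\ne j}\frac{1}{2g_{ij}}$, precisely to repair this. Your Riemann-sum plan (Taylor expand, insert into the summands, dominated convergence) would run into this failure: the near-diagonal cancellation you count on in $\mathcal{E}_2^m$ is not present in the obvious discretisation, and the equilateral condition plus $W^{2,\infty}$ regularity are not enough to rescue it without the correction. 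Since the paper's entire Lemma 4.2–4.10 machinery (the $K(\dot\vecf,\epsilon)$ seminorms, the $\mathscr{P}$/$\mathscr{R}$ splitting of the densities, and the vanishing of terms $\mathrm{II},\mathrm{III},\mathrm{IV},\mathrm{J}_2,\dots,\mathrm{J}_5$ under the equilateral condition) exists to control exactly this near-diagonal blow-up, your one-paragraph sketch of the convergence substantially understates the difficulty and would, as stated, yield a divergent $\mathcal{E}_2^m$.

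What your approach buys, if it could be completed, is a more meaningful first bullet: an \emph{a priori} discrete energy $\mathcal{E}^m$ that independently discretises the Möbius energy and is then shown to decompose. The paper opts for the cheaper route of building the decomposition into the definition, putting all the work into the convergence statement. As a plan, yours is more ambitious but currently missing both the mechanism for the constant $4$ and the correction needed to make $\mathcal{E}_2^m$ finite.
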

\par
Before defining discrete energies we study the M\"{o}bius invariant properties of the conformal angle in next section.
This is necessary for defining M\"{o}bius invariant discrete energies.
As a by-product,
we present an alternative proof of the M\"{o}bius invariant property of $ \mathcal{E}_1 $ and
$ \mathcal{E}_2 $.
We explain how to define the discrete energies and show their M\"{o}bius invariance in section 3.
In the final section we shall prove their convergence as $ m \to \infty $.
\section{The M\"{o}bius invariant property}
\par
\begin{defi}
Let $ C_{ij} $ be the circle through $ s_j $ which is tangent to $ \mathrm{Im} \ \vecf $ at $ \vecf( s_i ) $.
The angle between $ C_{ij} $ and $ C_{ji} $ at $ \vecf( s_i ) $ or $ \vecf ( s_j ) $ is called the {\rm conformal angle}, and we denote it $ \varphi = \varphi_f ( s_i , s_j ) $.
\end{defi}
\par
By elementary calculation we find the relation
\begin{align*}
	\cos \varphi ( s_1 , s_2 )
	= & \
	\frac { 2 ( \Delta \vecf \cdot \vectau ( s_1 ) ) ( \Delta \vecf \cdot \vectau ( s_2 ) ) }
	{ \| \Delta \vecf \|_{ \mathbb{R}^n }^2 }
	-
	\vectau ( s_1 ) \cdot \vectau ( s_2 )
	\\
	= & \
	- \| \vecf ( s_1 ) - \vecf ( s_2 ) \|_{ \mathbb{R}^n }^2
	\left( \mathscr{M}_1 ( \vecf ) + \mathscr{M}_2 ( \vecf ) \right)
	+
	1 ,
\end{align*}
where $ \mathscr{M}_i ( \vecf ) $ is the energy density of $ \mathcal{E}_i ( \vecf ) $.
From this,
we easily obtain an expression of the M\"{o}bius energy by using the conformal angle,
which is known as the cosine formula.
\begin{prop}[the cosine formula]
It holds that
\[
	\iint_{ ( \mathbb{R} / \mathcal{L} \mathbb{Z} )^2 }
	\frac { 1 - \cos \varphi ( s_1 , s_2 ) }
	{
	\| \vecf ( \theta_1 ) - \vecf ( \theta_2 ) \|_{ \mathbb{R}^n }^2
	}
	\, d s_1 d s_2
	=
	\mathcal{E} ( \vecf ) - 4
	.
\]
\end{prop}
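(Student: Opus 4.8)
The plan is to obtain the cosine formula directly from the pointwise identity for $\cos\varphi$ displayed just above the statement, combined with the M\"obius invariant decomposition $\mathcal{E}(\vecf) = \mathcal{E}_1(\vecf) + \mathcal{E}_2(\vecf) + 4$ established in \cite{IshizekiNagasawaI,IshizekiNagasawaIII}. The second line of that identity reads
\[
	\cos\varphi(s_1,s_2)
	= 1 - \|\vecf(s_1) - \vecf(s_2)\|_{\mathbb{R}^n}^2
	\bigl( \mathscr{M}_1(\vecf) + \mathscr{M}_2(\vecf) \bigr) ,
\]
so dividing by $\|\vecf(s_1) - \vecf(s_2)\|_{\mathbb{R}^n}^2$ gives, for every pair $(s_1,s_2)$ off the diagonal,
\[
	\frac{1 - \cos\varphi(s_1,s_2)}{\|\vecf(s_1) - \vecf(s_2)\|_{\mathbb{R}^n}^2}
	= \mathscr{M}_1(\vecf) + \mathscr{M}_2(\vecf) ,
\]
where $\mathscr{M}_i(\vecf)$ is by definition the energy density of $\mathcal{E}_i(\vecf)$.

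Next I would integrate this equality over $(\mathbb{R}/\mathcal{L}\mathbb{Z})^2$. The right-hand side integrates to $\mathcal{E}_1(\vecf) + \mathcal{E}_2(\vecf)$ by the definition of the two decomposed energies, and the decomposition $\mathcal{E}(\vecf) = \mathcal{E}_1(\vecf) + \mathcal{E}_2(\vecf) + 4$ then converts the left-hand side into $\mathcal{E}(\vecf) - 4$, which is the assertion. The only point that deserves a line is that the left-hand integral converges, i.e.\ that its integrand is integrable near the diagonal $s_1 = s_2$; but this holds whenever $\mathcal{E}(\vecf) < \infty$, since then $\mathscr{M}_1(\vecf)$ and $\mathscr{M}_2(\vecf)$ are integrable (for a sufficiently regular curve they even remain bounded as $s_1 \to s_2$, because $\|\Delta\vectau\|_{\mathbb{R}^n}$ and $\vectau(s_i)\wedge\Delta\vecf$ are $O(|s_1-s_2|)$ and $O(|s_1-s_2|^2)$ respectively, while $\|\Delta\vecf\|_{\mathbb{R}^n}$ is comparable to $|s_1-s_2|$), so the equality of densities passes to equality of integrals.

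I do not anticipate a real obstacle: the two substantive ingredients --- the elementary identity for $\cos\varphi$ recalled above and the decomposition theorem --- are already available, and the Proposition is essentially their combination after dividing by $\|\Delta\vecf\|_{\mathbb{R}^n}^2$ and integrating. Should one prefer a proof that does not cite the decomposition theorem, an alternative is to start from the first line of the $\cos\varphi$ identity, rewrite $\|\Delta\vectau\|_{\mathbb{R}^n}^2 = 2 - 2\,\vectau(s_1)\cdot\vectau(s_2)$ and use the Lagrange identity $\langle a\wedge b, c\wedge d\rangle = (a\cdot c)(b\cdot d) - (a\cdot d)(b\cdot c)$ to re-express $1 - \cos\varphi$ through $\|\Delta\vecf\|_{\mathbb{R}^n}^2(\mathscr{M}_1 + \mathscr{M}_2)$ once more, and then recover the constant $-4$ by the classical computation behind the ``$+4$'' in the decomposition; the route via the cited decomposition is, however, the shortest one that is self-contained given this excerpt.
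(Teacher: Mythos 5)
Your proof is correct and follows essentially the same route as the paper's own (commented-out) proof: divide the pointwise identity $\cos\varphi = 1 - \|\Delta\vecf\|^2(\mathscr{M}_1 + \mathscr{M}_2)$ by $\|\Delta\vecf\|^2$, integrate, and invoke the decomposition $\mathcal{E} = \mathcal{E}_1 + \mathcal{E}_2 + 4$. The extra remarks on integrability near the diagonal are sound but not part of the paper's argument.
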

\par
The M\"{o}bius invariance of $ \varphi $ follows from that of cross ratio.
\begin{lem}
It holds that
\[
	\cos \varphi ( \theta_1 , \theta_2 )
	=
	\frac
	{ \| \Delta \vecf \|_{ \mathbb{R}^n }^2 }
	{
	2
	\| \dot { \vecf } ( \theta_1 ) \|_{ \mathbb{R}^n }
	\| \dot { \vecf } ( \theta_2 ) \|_{ \mathbb{R}^n }
	}
	\frac \partial { \partial \theta_1 \partial \theta_2 }
	\frac
	{ \| \Delta \vecf \|_{ \mathbb{R}^n }^2 }
	{
	\| \dot { \vecf } ( \theta_1 ) \|_{ \mathbb{R}^n }
	\| \dot { \vecf } ( \theta_2 ) \|_{ \mathbb{R}^n }
	}
	.
\]
In particular,
$ \cos \varphi ( \theta_1 , \theta_2 ) $ is invariant under M\"{o}bius transformations.
\end{lem}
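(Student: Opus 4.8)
The plan is to reduce everything to the squared chord length $g(\theta_1,\theta_2):=\|\Delta\vecf\|^2$, which is one and the same function for every parametrization. First I would work in the arc-length parametrization, where $\partial_s\vecf=\vectau$, so that
\[
\partial_{s_1}g=2\,\Delta\vecf\cdot\vectau(s_1),\qquad
\partial_{s_2}g=-2\,\Delta\vecf\cdot\vectau(s_2),\qquad
\partial_{s_1}\partial_{s_2}g=-2\,\vectau(s_1)\cdot\vectau(s_2),
\]
the mixed second derivative producing \emph{no} derivative of $\vectau$. Substituting $\Delta\vecf\cdot\vectau(s_1)=\tfrac12\partial_{s_1}g$, $\Delta\vecf\cdot\vectau(s_2)=-\tfrac12\partial_{s_2}g$ and $\vectau(s_1)\cdot\vectau(s_2)=-\tfrac12\partial_{s_1}\partial_{s_2}g$ into the formula for $\cos\varphi$ in terms of $\vectau$ displayed just before the Lemma turns it into
\[
\cos\varphi(s_1,s_2)=\frac12\,\partial_{s_1}\partial_{s_2}g-\frac{(\partial_{s_1}g)(\partial_{s_2}g)}{2g}=\frac{g}{2}\,\partial_{s_1}\partial_{s_2}\log g .
\]

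Next I would pass to an arbitrary regular parameter $\theta$ with $ds_i/d\theta_i=\|\dot\vecf(\theta_i)\|=:v_i$. Since each Jacobian factor depends on its own variable only, the mixed second derivative transforms with no extra terms, $\partial_{s_1}\partial_{s_2}(\,\cdot\,)=v_1^{-1}v_2^{-1}\,\partial_{\theta_1}\partial_{\theta_2}(\,\cdot\,)$, while $g=\|\Delta\vecf\|^2$ is unchanged; hence $\cos\varphi=\dfrac{\|\Delta\vecf\|^2}{2v_1v_2}\,\partial_{\theta_1}\partial_{\theta_2}\log\|\Delta\vecf\|^2$. Finally $\partial_{\theta_1}\partial_{\theta_2}\log v_1=\partial_{\theta_1}\partial_{\theta_2}\log v_2=0$ (each $v_i$ is independent of the other variable), so inside the logarithm one may divide by $v_1v_2$ at no cost, which is exactly
\[
\cos\varphi(\theta_1,\theta_2)=\frac{\|\Delta\vecf\|^2}{2\,\|\dot\vecf(\theta_1)\|\,\|\dot\vecf(\theta_2)\|}\;\partial_{\theta_1}\partial_{\theta_2}\log\frac{\|\Delta\vecf\|^2}{\|\dot\vecf(\theta_1)\|\,\|\dot\vecf(\theta_2)\|}\,,
\]
the identity of the Lemma.

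For the M\"{o}bius invariance I would set $G(\theta_1,\theta_2)=\|\Delta\vecf\|^2/(v_1v_2)$, so that the identity just proved reads $\cos\varphi=\tfrac12\,G\,\partial_{\theta_1}\partial_{\theta_2}\log G$, an expression built from $G$ and its derivatives alone; thus it is enough to see that $G$ does not change when $\vecf$ is replaced by $T\circ\vecf$. One way is to note that $G$ is recovered from the cross ratio of $\vecf(\theta_1),\vecf(\theta_1+h),\vecf(\theta_2),\vecf(\theta_2+k)$ by letting $h,k\to0$, and the cross ratio is M\"{o}bius invariant; more concretely, it suffices to check invariance on generators of the M\"{o}bius group. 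For translations, orthogonal maps and dilations this is immediate from the matching homogeneity of numerator and denominator. For an inversion centred at $\vecp$ one uses $\|T\vecx-T\vecy\|^2=\|\vecx-\vecy\|^2\big/(\|\vecx-\vecp\|^2\|\vecy-\vecp\|^2)$ and $\|DT(\vecx)\vecv\|=\|\vecv\|/\|\vecx-\vecp\|^2$ (the differential of an inversion is a positive multiple of an orthogonal matrix); the factors $\|\vecf(\theta_i)-\vecp\|^{-2}$ then cancel between the numerator of $G_{T\circ\vecf}$ and its two speed factors, so $G_{T\circ\vecf}=G_\vecf$, and therefore $\cos\varphi$ is M\"{o}bius invariant.

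The only step carrying real content is the arc-length computation in the first paragraph --- recognizing that the two terms of the cosine formula assemble into $\tfrac{g}{2}\partial_{s_1}\partial_{s_2}\log g$. The point to be careful about is that $\partial_{s_1}\partial_{s_2}g=-2\,\vectau(s_1)\cdot\vectau(s_2)$ relies on $\partial_s\vecf=\vectau$, which is true only in the arc-length parametrization; differentiating directly in a general parameter $\theta$ would bring in $\ddot\vecf$ and obscure the structure, so it is essential to differentiate first in arc length and reparametrize only afterwards.
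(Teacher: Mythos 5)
Your proof is correct and follows essentially the same path as the paper's: relate $\cos\varphi$ to $\partial_{\theta_1}\partial_{\theta_2}\log\|\Delta\vecf\|^2$, observe that $\partial_{\theta_1}\partial_{\theta_2}\log(\|\dot\vecf(\theta_1)\|\|\dot\vecf(\theta_2)\|)=0$, and get M\"obius invariance of $g$ from the cross ratio of $\vecf(\theta_1),\vecf(\theta_1+\delta\theta_1),\vecf(\theta_2),\vecf(\theta_2+\delta\theta_2)$ in the limit. Your arc-length intermediate step is simply a transparent way to perform the ``direct calculation'' the paper suppresses, and the generator-by-generator check of invariance is an optional alternative; neither changes the substance of the argument.
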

\begin{proof}
The direct calculation derives
\begin{align*}
	\frac \partial { \partial \theta_1 \partial \theta_2 }
	\log \| \Delta \vecf \|_{ \mathbb{R}^n }^2
	= & \
	\frac {
	2
	\| \dot { \vecf } ( \theta_1 ) \|_{ \mathbb{R}^n }
	\| \dot { \vecf } ( \theta_2 ) \|_{ \mathbb{R}^n }
	}
	{ \| \Delta \vecf \|_{ \mathbb{R}^n }^2 }
	\cos \varphi ( \theta_1 , \theta_2 )
	.
\end{align*}
Since
\[
	\frac \partial { \partial \theta_1 \partial \theta_2 }
	\log
	\| \dot { \vecf } ( \theta_1 ) \|_{ \mathbb{R}^n }
	\| \dot { \vecf } ( \theta_2 ) \|_{ \mathbb{R}^n }
	=
	\frac \partial { \partial \theta_1 \partial \theta_2 }
	\left(
	\log
	\| \dot { \vecf } ( \theta_1 ) \|_{ \mathbb{R}^n }
	+
	\log
	\| \dot { \vecf } ( \theta_2 ) \|_{ \mathbb{R}^n }
	\right)
	= 0
	,
\]
we have
\[
	\frac \partial { \partial \theta_1 \partial \theta_2 }
	\log \| \Delta \vecf \|_{ \mathbb{R}^n }^2
	=
	\frac \partial { \partial \theta_1 \partial \theta_2 }
	\log
	\frac
	{ \| \Delta \vecf \|_{ \mathbb{R}^n }^2 }
	{ \| \dot { \vecf } ( \theta_1 ) \|_{ \mathbb{R}^n } \| \dot { \vecf } ( \theta_2 ) \|_{ \mathbb{R}^n } }
	.
\]
Therefore we get the first half of assertion.
Since a M\"{o}bius transformation keeps the cross ration of distinct four points in $ \mathbb{R}^d $,
it holds that
\[
	\frac{
	\| \vecf ( \theta_1 + \delta \theta_1 ) - \vecf ( \theta_1 ) \|_{ \mathbb{R}^n }
	\| \vecf ( \theta_2 + \delta \theta_2 ) - \vecf ( \theta_2 ) \|_{ \mathbb{R}^n }
	}
	{
	\| \vecf ( \theta_1 + \delta \theta_1 ) - \vecf ( \theta_2 + \delta \theta_2 ) \|_{ \mathbb{R}^n }
	\| \vecf ( \theta_1 ) - \vecf ( \theta_2 ) \|_{ \mathbb{R}^n }
	}
\]
is a M\"{o}bius invariance for sufficiently small $ | \delta \theta_1 | $ and $ | \delta \theta_2 | $ provided $ \vecf ( \theta_1 ) \ne \vecf ( \theta_2 ) $.
Dividing by $ \delta \theta_1 \delta \theta_2 $ and passing $ \delta \theta_1 \to 0 $,
$ \delta \theta_2 \to 0 $,
we find the quantity $ \displaystyle{
	\frac{
	\| \dot { \vecf } ( \theta_1 ) \|_{ \mathbb{R}^n }
	\| \dot { \vecf } ( \theta_2 ) \|_{ \mathbb{R}^n }
	}
	{ \| \Delta \vecf \|_{ \mathbb{R}^n }^2 }
} $ is also a M\"{o}bius invariance.
Consequently,
the second assertion has been shown.
\qed
\end{proof}
\par
Put
\[
	g = g ( \theta_1 , \theta_2 )
	=
	\frac{
	\| \dot { \vecf } ( \theta_1 ) \|_{ \mathbb{R}^n }
	\| \dot { \vecf } ( \theta_2 ) \|_{ \mathbb{R}^n }
	}
	{ \| \Delta \vecf \|_{ \mathbb{R}^n }^2 } .
\]
Note that we can define $ \displaystyle{ \frac { \partial^2 g } { \partial \theta_1 \partial \theta_2 } } $ in the classical sense for $ \vecf \in C^2 ( \mathbb{R} / \mathbb{Z} ) $,
in the weak sense for $ \vecf \in W^{2,1}_{\scriptsize\rm loc} ( \mathbb{R} / \mathbb{Z} \setminus \{ \theta_1 = \theta_2 \} ) $. 
The density of the M\"{o}bius energy can be written by the M\"{o}bius invariance $ g $ and its derivatives:
\[
	\mathcal{E} ( \vecf ) - 4
	=
	\iint_{ ( \mathbb{R}/ \mathbb{Z} )^2 }
	\left\{
	\frac 1g
	-
	\frac 12
	\frac \partial { \partial \theta_1 }
	\left(
	\frac 1g
	\frac { \partial g } { \partial \theta_2 }
	\right)
	\right\}
	d \theta_1 d \theta_2
	.
\]
This gives an alternative proof of the M\"{o}bius invariance of decomposed energies.
It is not difficult to see
\[
	\frac 1g
	-
	\frac 12
	\frac \partial { \partial \theta_1 }
	\left(
	\frac 1g
	\frac { \partial g } { \partial \theta_2 }
	\right)
	=
	\frac 1g
	\left(
	1
	+
	\frac 12
	\frac { \partial^2 g } { \partial \theta_1 \partial \theta_2 }
	\right)
	-
	\frac 1 { 2 g^2 }
	\det
	\left(
	\begin{array}{cc}
	\partial_1 \partial_2 g & \partial_1 g \\
	\partial_2 g & 2 g
	\end{array}
	\right)
	.
\]
We define the energies $ \widehat { \mathcal{E}_1 } $ and $ \widehat { \mathcal{E}_2 } $ by
\begin{align*}
	\widehat { \mathcal{E} }_1 ( \vecf )
	= & \
	\iint_{ ( \mathbb{R}/ \mathbb{Z} )^2 }
	\frac 1g
	\left(
	1
	+
	\frac 12
	\frac { \partial^2 g } { \partial \theta_1 \partial \theta_2 }
	\right)
	d \theta_1 d \theta_2
	,
	\\
	\widehat { \mathcal{E} }_2 ( \vecf )
	= & \
	-
	\iint_{ ( \mathbb{R}/ \mathbb{Z} )^2 }
	\frac 1 { 2 g^2 }
	\det
	\left(
	\begin{array}{cc}
	\partial_1 \partial_2 g & \partial_1 g \\
	\partial_2 g & 2 g
	\end{array}
	\right)
	d \theta_1 d \theta_2
	.
\end{align*}
These are M\"{o}bius invariant energies,
since so is $ g $.
\begin{thm}
We have
$ \mathcal{E}_1 ( \vecf ) = \widehat { \mathcal{E} }_1 ( \vecf ) $,
and
$ \mathcal{E}_2 ( \vecf ) = \widehat { \mathcal{E} }_2 ( \vecf ) $.
\label{E=hat E}
\end{thm}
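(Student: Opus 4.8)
The plan is to reduce both identities to a single one. By the representation of $\mathcal{E}(\vecf)-4$ through $g$ together with the algebraic identity displayed just above, $\mathcal{E}(\vecf)-4=\widehat{\mathcal{E}}_1(\vecf)+\widehat{\mathcal{E}}_2(\vecf)$; and $\mathcal{E}(\vecf)-4=\mathcal{E}_1(\vecf)+\mathcal{E}_2(\vecf)$ by \cite{IshizekiNagasawaI,IshizekiNagasawaIII}. Hence it suffices to prove $\mathcal{E}_2(\vecf)=\widehat{\mathcal{E}}_2(\vecf)$, and then $\mathcal{E}_1(\vecf)=\widehat{\mathcal{E}}_1(\vecf)$ follows by subtraction. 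To compare $\mathcal{E}_2$ and $\widehat{\mathcal{E}}_2$ I would take $\vecf$ parametrized by arc length, so that $g=\|\Delta\vecf\|^{-2}$, and express both integrands through the elementary scalars $r=\|\Delta\vecf\|$, $a_i=\Delta\vecf\cdot\vectau(s_i)$ and $c=\vectau(s_1)\cdot\vectau(s_2)$. The cosine-formula computation already records $\mathscr{M}_2=2c/r^2-2a_1a_2/r^4$, while differentiating $g=r^{-2}$ with the help of $\partial_{s_1}r^2=2a_1$, $\partial_{s_2}r^2=-2a_2$ and $\partial_{s_1}\partial_{s_2}r^2=-2c$ turns the density $-\tfrac{1}{2g^{2}}\det(\cdots)$ of $\widehat{\mathcal{E}}_2$ into $\mathscr{M}_2-2\,\partial_{\theta_1}\partial_{\theta_2}\log g$. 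The excess $-2\,\partial_{\theta_1}\partial_{\theta_2}\log g=-2\,\partial_{\theta_1}\!\bigl(\partial_{\theta_2}\log g\bigr)$ is a total $\theta_1$-derivative, and by the cross-ratio Lemma above — more precisely by the identity $\partial_{\theta_1}\partial_{\theta_2}\log\|\Delta\vecf\|^{2}=2g\cos\varphi$ established in its proof — it is moreover M\"obius invariant.

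It then remains to show that $\iint_{(\mathbb{R}/\mathbb{Z})^{2}}\partial_{\theta_1}\partial_{\theta_2}\log g\,d\theta_1\,d\theta_2=0$ in the renormalized sense used to define $\widehat{\mathcal{E}}_2$. Formally this is immediate from periodicity in $\theta_1$, but $\partial_{\theta_2}\log g$ is singular on the diagonal: in arc length $\partial_{s_2}\log g=2a_2/r^{2}=2/(s_1-s_2)+O(s_1-s_2)$, an \emph{odd} simple pole. Excising $\{|s_1-s_2|<\epsilon\}$ and integrating in $s_1$ by the fundamental theorem of calculus, the two boundary contributions at $s_1=s_2\pm\epsilon$ add up — precisely because the pole is odd — to a term of order $\epsilon^{-1}$ with no finite remainder; and this $\epsilon^{-1}$ term is exactly the renormalization constant already present in the definition of $\widehat{\mathcal{E}}_2$, whose density, unlike that of $\mathcal{E}_2$, is not Lebesgue integrable near $\{\theta_1=\theta_2\}$. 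Therefore the renormalized integral of the excess vanishes, so $\mathcal{E}_2(\vecf)=\widehat{\mathcal{E}}_2(\vecf)$ and, by the decomposition, $\mathcal{E}_1(\vecf)=\widehat{\mathcal{E}}_1(\vecf)$.

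The hard part is the bookkeeping of this last step: one must produce the singular expansion of $-\tfrac{1}{2g^{2}}\det(\cdots)$ near $\{\theta_1=\theta_2\}$ accurately enough to see that its whole non-integrable part is supplied by the total-derivative term $-2\,\partial_{\theta_1}\partial_{\theta_2}\log g$, and one must check that the regularization that makes $\widehat{\mathcal{E}}_2$ finite is precisely the one — excise $\{|s_1-s_2|<\epsilon\}$, discard the $\epsilon^{-1}$-term, let $\epsilon\to0$ — under which that total-derivative term integrates to zero. After this the remaining manipulations are routine; as a sanity check one can run the argument on the round circle, where $\mathscr{M}_2$ is constant, the excess is a constant multiple of $\sin^{-2}$ of the parameter difference, and $\mathcal{E}_2$ and $\widehat{\mathcal{E}}_2$ both come out equal to $-2\pi^{2}$.
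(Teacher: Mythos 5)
There is a sign/reciprocal error in your choice of $g$ that derails everything downstream. You take $g=\|\Delta\vecf\|^{-2}$ in arc length, following the displayed definition $g=\|\dot\vecf(\theta_1)\|\,\|\dot\vecf(\theta_2)\|/\|\Delta\vecf\|^{2}$. But the formula $\mathcal{E}-4=\iint\{g^{-1}-\tfrac12\partial_1(g^{-1}\partial_2 g)\}\,d\theta_1\,d\theta_2$ and the proof of this theorem in the paper both use the reciprocal, $g=g_n/g_d=\|\Delta\vecf\|^{2}/(\|\dot\vecf(\theta_1)\|\,\|\dot\vecf(\theta_2)\|)$; the earlier display is a typo, as one checks against the cosine formula. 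With the correct $g=r^{2}$ in arc length one has $\partial_1 g=2a_1$, $\partial_2 g=-2a_2$, $\partial_1\partial_2 g=-2c$, so that $-\tfrac{1}{2g^{2}}\det(\cdots)=\tfrac{2c}{r^{2}}-\tfrac{2a_1a_2}{r^{4}}=\mathscr{M}_2$ identically, and likewise $g^{-1}(1+\tfrac12\partial_1\partial_2 g)=\tfrac{1-c}{r^{2}}=\mathscr{M}_1$. There is no excess $-2\partial_1\partial_2\log g$, no non-integrable singularity, and no renormalization to perform; the paper's $\widehat{\mathcal{E}}_2$ is an ordinary Lebesgue integral, not a regularized one. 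Your circle sanity check betrays the error: with $g=r^{-2}$ the density you actually get is $\sin^{-2}((s_1-s_2)/2)-\tfrac12$, whose integral is $+\infty$, not $-2\pi^{2}$; you obtain $-2\pi^{2}$ only by silently reverting to the correct $g$.

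Once the convention is fixed, your reduction to arc length makes both claims tautologies (the densities agree pointwise), which shows you have not located the theorem's actual content. The statement is made for a general parametrization $\theta$, where $g_d=\|\dot\vecf(\theta_1)\|\,\|\dot\vecf(\theta_2)\|$ is not constant. There the density of $\widehat{\mathcal{E}}_1$ is $\tfrac{g_d}{g_n}\bigl(1+\tfrac12\partial_1\partial_2\tfrac{g_n}{g_d}\bigr)$, which differs pointwise from the density $\tfrac{g_d}{g_n}\bigl(1+\tfrac{1}{2g_d}\partial_1\partial_2 g_n\bigr)$ of $\mathcal{E}_1$, and the theorem asserts the two integrate to the same value. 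The paper proves this by expanding the difference into pieces built from $\partial_i\log g_d$ and $\partial_i\log g_n$ and showing that each piece integrates to zero: the two cross terms $(\partial_1\log g_d)(\partial_2\log g_n)+(\partial_1\log g_n)(\partial_2\log g_d)$ by a cancellation lemma from \cite{IshizekiNagasawaI}, the $(\partial_1\log g_d)(\partial_2\log g_d)$ term by periodicity (since $\log g_d$ has no diagonal singularity), and the $g_d^{-1}\partial_1\partial_2 g_d$ term by integration by parts. None of these occur in arc length, which is precisely why your argument has nothing to prove there. To salvage the arc-length route you would need to establish separately that $\widehat{\mathcal{E}}_i$ is invariant under reparametrization — an assertion that is not obvious (since $g$ itself is not reparametrization-invariant) and is essentially equivalent to the theorem you set out to prove.
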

\begin{proof}
Since $ \mathcal{E}_1 + \mathcal{E}_2 = \widehat { \mathcal{E} }_1 + \widehat { \mathcal{E}_2 } = \mathcal{E} - 4 $,
it is enough to show the relation for $ \mathcal{E}_1 ( \vecf ) $.
Put
\[
	g_n = \| \Delta \vecf \|_{ \mathbb{R}^n }^2
	,
	\quad
	g_d =
	\| \dot { \vecf } ( \theta_1 ) \|_{ \mathbb{R}^n }
	\| \dot { \vecf } ( \theta_2 ) \|_{ \mathbb{R}^n }
	.
\]
Then we have
$ \displaystyle{
	g = \frac { g_n } { g_d } ,
} $
and
\begin{align*}
	\mathcal{E}_1 ( \vecf )
	= & \
	\frac 12
	\iint_{ ( \mathbb{R} / \mathcal{L} \mathbb{Z} )^2 }
	\frac { \| \vectau ( s_1 ) - \vectau ( s_2 ) \|_{ \mathbb{R}^n }^2 }
	{ \| \Delta \vecf \|_{ \mathbb{R}^n }^2 }
	\, d s_1 d s_2
	\\
	= & \
	\iint_{ ( \mathbb{R} / \mathcal{L} \mathbb{Z} )^2 }
	\frac { 1 - \vectau ( s_1 ) \cdot \vectau ( s_2 ) }
	{ \| \Delta \vecf \|_{ \mathbb{R}^n }^2 }
	\, d s_1 d s_2
	\\
	= & \
	\iint_{ ( \mathbb{R} / \mathbb{Z} )^2 }
	\frac 1g
	\left( 1 - 
	\frac { \dot { \vecf } ( \theta_1 ) \cdot \dot { \vecf } ( \theta_2 ) }
	{
	\| \dot { \vecf } ( \theta_1 ) \|_{ \mathbb{R}^n }
	\| \dot { \vecf } ( \theta_2 ) \|_{ \mathbb{R}^n }
	}
	\right)
	d \theta_1 d \theta_2
	\\
	= & \
	\iint_{ ( \mathbb{R} / \mathbb{Z} )^2 }
	\frac 1g
	\left(
	1 +
	\frac 1
	{
	2
	\| \dot { \vecf } ( \theta_1 ) \|_{ \mathbb{R}^n }
	\| \dot { \vecf } ( \theta_2 ) \|_{ \mathbb{R}^n }
	}
	\frac { \partial^2 } { \partial \theta_1 \partial \theta_2 }
	\| \Delta \vecf \|_{ \mathbb{R}^n }^2
	\right)
	d \theta_1 d \theta_2
	\\
	= & \
	\iint_{ ( \mathbb{R} / \mathbb{Z} )^2 }
	\frac { g_d } { g_n }
	\left(
	1 +
	\frac 1 { 2 g_d }
	\frac { \partial^2 g_n } { \partial \theta_1 \partial \theta_2 }
	\right)
	d \theta_1 d \theta_2
	.
\end{align*}
On the other hand,
we have
\[
	\widehat { \mathcal{E} }_1 ( \vecf )
	=
	\iint_{ ( \mathbb{R} / \mathbb{Z} )^2 }
	\frac { g_d } { g_n }
	\left(
	1 +
	\frac 12
	\frac { \partial^2 } { \partial \theta_1 \partial \theta_2 }
	\frac { g_n } { g_d }
	\right)
	d \theta_1 d \theta_2 .
\]
Therefore the difference between $ \mathcal{E}_1 ( \vecf ) $ and $ \widehat { \mathcal{E} }_1 ( \vecf ) $ is
\begin{align*}
	&
	\mathcal{E}_1 ( \vecf )
	-
	\widehat { \mathcal{E} }_1 ( \vecf )
	\\
	& \quad
	=
	\frac 12
	\iint_{ ( \mathbb{R} / \mathbb{Z} )^2 }
	\frac { g_d } { g_n }
	\left(
	\frac 1 { g_d }
	\frac { \partial^2 g_n } { \partial \theta_1 \partial \theta_2 }
	-
	\frac { \partial^2 } { \partial \theta_1 \partial \theta_2 }
	\frac { g_n } { g_d }
	\right)
	d \theta_1 d \theta_2
	\\
	& \quad
	=
	\frac 12
	\iint_{ ( \mathbb{R} / \mathbb{Z} )^2 }
	\left\{
	\left(
	\frac \partial { \partial \theta_1 } \log g_d
	\right)
	\left(
	\frac \partial { \partial \theta_2 } \log g_n
	\right)
	+
	\left(
	\frac \partial { \partial \theta_1 } \log g_n
	\right)
	\left(
	\frac \partial { \partial \theta_2 } \log g_d
	\right)
	\right.
	\\
	& \quad \qquad \qquad \qquad \qquad
	\left.
	- \,
	2
	\left(
	\frac \partial { \partial \theta_1 } \log g_d
	\right)
	\left(
	\frac \partial { \partial \theta_2 } \log g_d
	\right)
	+
	\frac 1 { g_d }
	\frac { \partial^2 g_d } { \partial \theta_1 \partial \theta_2 }
	\right\}
	d \theta_1 d \theta_2
	.
\end{align*}
We can show
\[
	\iint_{ ( \mathbb{R} / \mathbb{Z} )^2 }
	\left\{
	\left(
	\frac \partial { \partial \theta_1 } \log g_d
	\right)
	\left(
	\frac \partial { \partial \theta_2 } \log g_n
	\right)
	+
	\left(
	\frac \partial { \partial \theta_1 } \log g_n
	\right)
	\left(
	\frac \partial { \partial \theta_2 } \log g_d
	\right)
	\right\}
	d \theta_1 d \theta_2
	=
	0
\]
in a manner similar to the proof of \cite[Theorem 3.2]{IshizekiNagasawaI}.
Since $ \log g_d $ does not have singularity anywhere,
\begin{align*}
	\iint_{ ( \mathbb{R} / \mathbb{Z} )^2 }
	\left(
	\frac \partial { \partial \theta_1 } \log g_d
	\right)
	\left(
	\frac \partial { \partial \theta_2 } \log g_d
	\right)
	d \theta_1 d \theta_2
	=
	0 .
\end{align*}
We can perform the integration by parts to get
\[
	\iint_{ ( \mathbb{R} / \mathbb{Z} )^2 }
	\frac 1 { g_d }
	\frac { \partial^2 g_d } { \partial \theta_1 \partial \theta_2 }
	d \theta_1 d \theta_2
	=
	\iint_{ ( \mathbb{R} / \mathbb{Z} )^2 }
	\left(
	\frac \partial { \partial \theta_1 } \log g_d
	\right)
	\left(
	\frac \partial { \partial \theta_2 } \log g_d
	\right)
	d \theta_1 d \theta_2
	=
	0 .
\]
\qed
\end{proof}
\par
Though Theorem \ref{E=hat E} is not our main purpose of this paper,
it seems to be interesting.
The decomposed energies $ \mathcal{E}_i $ are M\"{o}bius invariant,
but their densities are not.
Theorem \ref{E=hat E} says that $ \mathcal{E}_i $ can be rewritten as the energies with M\"{o}bius invariant densities.
The domain of $ \widehat { \mathcal{E} }_i $ is narrower that that of $ \mathcal{E}_i $,
hence we can regard $ \mathcal{E}_i $ as a relaxation of $ \widehat{ \mathcal{E} }_i $.
\section{A M\"{o}bius invariant discritization}
\par
For the discretization we use notation $ \Delta_i^j $ and $ \Delta_i $ to mean
\[
	\Delta_i^j \vecf = \vecf ( \theta_j ) - \vecf ( \theta_i ) ,
	\quad
	\Delta_i \vecf = \Delta_i^{ i+1 } \vecf = \vecf ( \theta_{ i+1 } ) - \vecf ( \theta_i ) .
\]
We discritize the M\"{o}bius invariance $ g $ as
\[
	g_{ij}
	=
	\frac
	{
	\| \Delta_i^j \vecf \|_{ \mathbb{R}^n }
	\| \Delta_{ i+1 }^{ j+1 } \vecf \|_{ \mathbb{R}^n }
	}
	{
	\| \Delta_i \vecf \|_{ \mathbb{R}^n }
	\| \Delta_j \vecf \|_{ \mathbb{R}^n } .
	}
\]
Since it is the cross ratio of for points $ \vecf ( \theta_i ) $,
$ \vecf ( \theta_{ i+1 } ) $,
$ \vecf ( \theta_j ) $,
and $ \vecf ( \theta_{ j+1 } ) $,
it is also a M\"{o}bius invariance.
\par
Using $ g_{ij} $ and difference operation instead of $ g $ and differential operation,
we consider discrete energies
\begin{align*}
	\mathcal{E}_1^m ( \vecf )
	= & \
	\sum_{ i \ne j }
	\frac 1 { g_{ij} }
	\left( 1 +\frac 12 \Delta_i \Delta_j g_{ij} \right)
	,
	\\
	\mathcal{E}_2^m ( \vecf )
	= & \
	-
	\sum_{ i \ne j }
	\frac 1 { 2 g_{ij} }
	\det
	\left(
	\begin{array}{cc}
	\Delta_i \Delta_j g_{ij} & \Delta_i g_{ij} \\
	\Delta_j g_{ij} & 2 g_{ij}
	\end{array}
	\right)
	,
	\\
	\mathcal{E}^m ( \vecf )
	= & \
	\mathcal{E}_1^m ( \vecf )
	+
	\mathcal{E}_2^m ( \vecf )
	+
	4
	.
\end{align*}
Since $ \mathcal{E}_1^m $ and $ \mathcal{E}_2^m $ are a discrete version of $ \mathcal{E}_1 $ and $ \mathcal{E}_2 $,
they are expected to convergent as $ m \to \infty $ under suitable assumption.
Indeed $ \mathcal{E}_1^m $ converges to $ \mathcal{E}_1 $,
but $ \mathcal{E}_2^m $,
and therefore $ \mathcal{E}^m $ do not.
A regular $ m $-polygon converges to a right circle,
but
\[
	\mathcal{E}^m ( \mbox{a regular $m$-polygon} ) \to \infty
	\ne
	4 = \mathcal{E} ( \mbox{a right circle} )
\]
as $ m \to \infty $.
To recover this situation,
we modify the definition as follows.
\begin{defi}
We define
$ \mathcal{E}_1^m $,
$ \mathcal{E}_2^m $,
and $ \mathcal{E}^m $ by
\begin{align*}
	\mathcal{E}_1^m ( \vecf )
	= & \
	\sum_{ i \ne j }
	\frac 1 { g_{ij} }
	\left( 1 +\frac 12 \Delta_i \Delta_j g_{ij} \right)
	,
	\\
	\mathcal{E}_2^m ( \vecf )
	= & \
	-
	\sum_{ i \ne j }
	\frac 1 { 2 g_{ij} }
	\left\{
	\det
	\left(
	\begin{array}{cc}
	\Delta_i \Delta_j g_{ij} & \Delta_i g_{ij} \\
	\Delta_j g_{ij} & 2 g_{ij}
	\end{array}
	\right)
	+
	1
	\right\}
	,
	\\
	\mathcal{E}^m ( \vecf )
	= & \
	\mathcal{E}_1^m ( \vecf )
	+
	\mathcal{E}_2^m ( \vecf )
	+
	4
	.
\end{align*}
\end{defi}
\par
\begin{prop}
The energies $ \mathcal{E}_1^m ( \vecf ) $,
$ \mathcal{E}_2^m ( \vecf ) $ and $ \mathcal{E}^m ( \vecf ) $ are M\"{o}bius invariant.
\end{prop}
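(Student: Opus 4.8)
The plan is to reduce the statement to a single observation already recorded above, namely that each cross ratio $g_{ij}$ is M\"obius invariant; granting that, the Proposition is essentially formal.

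First I would make explicit that all three discrete energies depend on the vertices $\vecf(\theta_k)$ only through the finite family $\{g_{kl}\}_{k\ne l}$. The difference operators act on the cyclic indices alone, so that $\Delta_i g_{ij}=g_{i+1,j}-g_{ij}$, $\Delta_j g_{ij}=g_{i,j+1}-g_{ij}$, and $\Delta_i\Delta_j g_{ij}=g_{i+1,j+1}-g_{i+1,j}-g_{i,j+1}+g_{ij}$. Consequently each summand appearing in $\mathcal{E}_1^m$ and $\mathcal{E}_2^m$ — that is, $\frac{1}{g_{ij}}(1+\frac12\Delta_i\Delta_j g_{ij})$ and $-\frac{1}{2g_{ij}}\{\det(\cdots)+1\}$ — is a fixed rational function of the $g_{kl}$'s, with no residual dependence on the positions of the vertices. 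Hence $\mathcal{E}_1^m(\vecf)$, $\mathcal{E}_2^m(\vecf)$ and $\mathcal{E}^m(\vecf)=\mathcal{E}_1^m(\vecf)+\mathcal{E}_2^m(\vecf)+4$ are all functions of $\{g_{kl}\}_{k\ne l}$ alone.

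Next I would confirm that each $g_{ij}$ is unchanged when $\vecf$ is replaced by $T\vecf$ for a M\"obius transformation $T$ with $T(\mathrm{Im}\,\vecf)\subset\mathbb{R}^n$; this is the content of the remark just preceding the Proposition, but it is worth re-examining the degenerate indices. Since such a $T$ sends no vertex to $\infty$ and (as throughout) the vertices are distinct, for $j\notin\{i-1,i,i+1\}$ the four points $\vecf(\theta_i),\vecf(\theta_{i+1}),\vecf(\theta_j),\vecf(\theta_{j+1})$ are pairwise distinct and $g_{ij}$ is exactly their cross ratio, so the invariance of the cross ratio of four distinct finite points in $\mathbb{R}^n$ — the fact already used in the proof of the preceding lemma on the conformal angle — applies. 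In the adjacent cases $j=i\pm1$ the numerator and denominator of $g_{ij}$ cancel and $g_{ij}\equiv1$, which is trivially $T$-invariant. Combining the two steps, $\{g_{kl}\}_{k\ne l}$ is invariant under $\vecf\mapsto T\vecf$, hence so are the three energies, which is M\"obius invariance in the sense of Definition \ref{def_Moebius_inv}.

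I expect the only point needing genuine care — and so the main (mild) obstacle — to be the cross-ratio identification of $g_{ij}$: one must treat the degenerate adjacent indices separately and use that $T$ keeps every vertex finite so that no factor in $g_{ij}$ vanishes or blows up. Everything else is bookkeeping, since the energies were deliberately built out of the M\"obius invariants $g_{ij}$.
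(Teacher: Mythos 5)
Your proof is correct and follows the same route as the paper, whose entire proof of this proposition is the single sentence that Möbius invariance follows from that of $g_{ij}$. You have merely made explicit the two implicit steps — that the discrete energies are functions of the family $\{g_{kl}\}$ alone, and that each $g_{kl}$ is a cross ratio — and added the check of the adjacent indices $j=i\pm 1$ (and implicitly $g_{kk}=0$), which the paper leaves tacit.
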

\begin{proof}
Their M\"{o}bius invariance follows from that of $ g_{ij} $.
\qed
\end{proof}
\section{Convergence of discrete energies under the equi-lateral condition}
\par
In this section we would like to show the convergence of $ \mathcal{E}_1^m $ and $ \mathcal{E}^m - 4 $.
As a corollary we get the convergence of $ \mathcal{E}_2^m $ also.
\subsection{Preparations}
\par
We put for a positive sequence $ u = \{ u_i \} $
\[
	\begin{array}{rll}
	A_i ( u ) = & \zume
	\displaystyle{ \frac { u_i + u_{ i+1 } } 2 }
	\quad
	& \mbox{(the arithmetic mean)} ,
	\\
	G_i ( u ) = & \zume
	\displaystyle{ \sqrt{ u_i u_{ i+1 } } }
	& \mbox{(the geomeric mean)},
	\\
	H_i (u) = & \zume
	\displaystyle{ 2 \left( \frac 1 { u_i } + \frac 1 { u_{ i+1 } } \right)^{-1} }
	& \mbox{(the harmonic mean)} .
	\end{array}
\]
We use similar notations for a positive sequence with two subscripts:
\[
	\begin{array}{rlrlrl}
	A_i (u)
	= & \zume
	\displaystyle{
	\frac { u_{ij} + u_{(i+1),j} } 2
	}
	,
	\
	&
	G_i (u)
	= & \zume
	\displaystyle{
	\sqrt{ u_{ij} u_{(i+1)j} }
	}
	,
	\
	&
	H_i (u)
	= & \zume
	\displaystyle{
	2 \left( \frac 1 { u_{ij} } + \frac 1 { u_{(i+1)j} } \right)^{-1}
	}
	,
	\\
	A_j (u)
	= & \zume
	\displaystyle{
	\frac { u_{ij} + u_{i(j+1)} } 2
	}
	,
	\
	&
	G_j (u)
	= & \zume
	\displaystyle{
	\sqrt{ u_{ij} u_{i(j+1)} }
	}
	,
	\
	&
	H_j (u)
	= & \zume
	\displaystyle{
	2 \left( \frac 1 { u_{ij} } + \frac 1 { u_{i(j+1)} } \right)^{-1}
	,
	}
	\end{array}
\]
and
\begin{align*}
	A_{ij} (u)
	= & \
	\frac { u_{ij} + u_{ (i+1)j } + u_{ i(j+1) } + u_{ (i+1)(j+1) } } 4
	,
	\\
	G_{ij} (u)
	= & \
	\sqrt[4]{ u_{ij} u_{ (i+1)j } u_{ i(j+1) } u_{ (i+1)(j+1) } }
	,
	\\
	H_{ij} (u)
	= & \
	4 \left(
	\frac 1 { u_{ij} } + \frac 1 { u_{ (i+1)j } }
	+
	\frac 1 { u_{ i(j+1) } } + \frac 1 { u_{ (i+1)(j+1) } }
	\right)^{-1}
	.
\end{align*}
In most cases we use,
for example,
$ A_i ( u_{ij} ) $ or $ A_i u_{ij} $ instead of $ A_i ( \{ u_{ij} \} ) $.
Furtheremore $ A_i G_j ( u_{ij} ) = A_i ( G_j ( u_{ij} ) ) $,
and so on.
It is easy to see the next identities.
\begin{fact}
There hold
\begin{align*}
	\Delta_i ( u_i v_i ) = & \
	( \Delta_i u_i ) A_i (v) + A_i (u) ( \Delta_i v_i ) ,
	\\
	\Delta_i \frac { u_i } { v_i } = & \
	\frac { \Delta_i u_i } { H_i (v) } + A_i (u) \left( \Delta_i \frac 1 v_i \right)
	,
	\\
	A_i \left( A_j ( \{ u_{ij} \} ) v_{ij} \right)
	= & \
	\frac { \Delta_i \Delta_j u_{ij} }
	{ H_{ij} v }
	+
	\Delta_j ( A_i u ) \Delta_i \left( \frac 1 { H_j v } \right)
	\\
	& \quad
	+ \,
	\Delta_i ( A_j u ) \Delta_j \left( \frac 1 { H_i v } \right)
	+
	( A_{ij} u ) \Delta_i \Delta_j \frac 1 { v_{ij} }
	.
\end{align*}
\label{fact}
\end{fact}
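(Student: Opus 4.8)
\emph{Proof proposal.} All three identities are elementary, and the plan is to establish them by direct algebraic manipulation, building the two-subscript identity out of the one-subscript one.

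First I would prove the product rule $\Delta_i(u_iv_i) = (\Delta_i u_i)A_i(v) + A_i(u)(\Delta_i v_i)$ by simply expanding the right-hand side: writing $\Delta_i u_i = u_{i+1}-u_i$ and $A_i(v)=\tfrac12(v_i+v_{i+1})$, etc., the four cross terms $\pm u_{i+1}v_i$ and $\pm u_iv_{i+1}$ cancel and one is left with $u_{i+1}v_{i+1}-u_iv_i = \Delta_i(u_iv_i)$. Next, for the quotient rule I would observe that
\[
	\frac1{H_i(v)} = \frac12\left(\frac1{v_i}+\frac1{v_{i+1}}\right) = A_i\!\left(\frac1v\right),
\]
so that applying the already-proven product rule with $v_i$ replaced by $1/v_i$ gives exactly
\[
	\Delta_i\frac{u_i}{v_i} = (\Delta_i u_i)\,A_i\!\left(\frac1v\right) + A_i(u)\left(\Delta_i\frac1{v_i}\right) = \frac{\Delta_i u_i}{H_i(v)} + A_i(u)\left(\Delta_i\frac1{v_i}\right).
\]

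For the third identity (reading its left-hand side as $\Delta_i\Delta_j\frac{u_{ij}}{v_{ij}}$), the plan is to iterate the product rule. First I would record two commutation facts, each a one-line check from the definitions since the operators act on distinct indices: $A_i\Delta_j = \Delta_j A_i$ and $A_iA_j = A_{ij}$, and likewise with $i$ and $j$ interchanged. Then, writing $w_{ij} = 1/v_{ij}$, I would apply the product rule in the index $j$ to get $\Delta_j(u_{ij}w_{ij}) = (\Delta_j u)(A_j w) + (A_j u)(\Delta_j w)$, then apply $\Delta_i$ to this and use the product rule once more in the index $i$ on each of the two resulting products. Collecting the four terms and using the commutation facts (so that $A_i\Delta_j u = \Delta_j A_i u$, $A_iA_j w = A_{ij}w$, and so on) yields
\[
	\Delta_i\Delta_j(u_{ij}w_{ij}) = (\Delta_i\Delta_j u)A_{ij}(w) + \Delta_j(A_iu)\Delta_i(A_jw) + \Delta_i(A_ju)\Delta_j(A_iw) + A_{ij}(u)\Delta_i\Delta_j w_{ij}.
\]
Substituting back $w_{ij} = 1/v_{ij}$, hence $A_{ij}(w) = 1/H_{ij}(v)$, $A_j(w) = 1/H_j(v)$ and $A_i(w) = 1/H_i(v)$, gives precisely the asserted identity.

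The computations are all routine; the only place asking for any care is the bookkeeping in the last step — keeping straight which averaging operator acts on which index, and checking that the two ``mixed'' middle terms genuinely remain distinct rather than collapsing into one. I do not anticipate any real obstacle. \qed
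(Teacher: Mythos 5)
Your proof is correct, and it matches the intended (unstated) argument: the paper does not actually prove Fact~\ref{fact}, it simply remarks ``It is easy to see the next identities,'' so your direct verification is precisely what is implicit there. You also rightly identified and repaired the misprint in the left-hand side of the third identity: as typeset, $A_i\bigl(A_j(\{u_{ij}\})\,v_{ij}\bigr)$ cannot be what is meant, since the right-hand side is visibly the expansion of $\Delta_i\Delta_j\frac{u_{ij}}{v_{ij}}$ obtained by iterating the discrete product rule with $w_{ij}=1/v_{ij}$; this is also consistent with how the fact is actually used later (to expand $\Delta_i\Delta_j\frac{g_{ij,n}}{g_{ij,d}}$ into $\mathrm{I}+\mathrm{II}+\mathrm{III}+\mathrm{IV}$). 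The key auxiliary observations you isolate — that $A_i(1/v)=1/H_i(v)$, $A_{ij}(1/v)=1/H_{ij}(v)$, and that $A_i$, $\Delta_i$ commute with $A_j$, $\Delta_j$ while $A_iA_j=A_{ij}$ — are exactly the ingredients needed, and the two mixed terms indeed remain distinct in general, as you note. No gap.
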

\subsection{Decomposition of densities of discrete energies}
\par
Put
\[
	g_{ij,d}
	=
	\| \Delta_i \vecf \|_{ \mathbb{R}^n }
	\| \Delta_j \vecf \|_{ \mathbb{R}^n }
	,
	\quad
	g_{ij,n}
	=
	\| \Delta_i^j \vecf \|_{ \mathbb{R}^n }
	\| \Delta_{ i+1 }^{ j+1 } \vecf \|_{ \mathbb{R}^n } ,
	\quad
	\bar g_{ij,n}
	= \| \Delta_i^j \vecf \|_{ \mathbb{R}^n }^2 .
\]
Let $ \mathscr{M}_{1,ij}^m $ and $ \mathscr{M}_{ij}^m $ be the densities of $ \mathcal{E}_1^m $ and $ \mathcal{E}^m - 4 $ respectively:
\[
	\mathcal{E}_1^m ( \vecf ) = \sum_{ i \ne j } \mathscr{M}_{1,ij}^m ( \vecf ) ,
	\quad
	\mathcal{E}^m ( \vecf ) - 4 = \sum_{ i \ne j } \mathscr{M}_{ij}^m ( \vecf ) ,
\]
Then
\begin{align*}
	\mathscr{M}_{1,ij}^m ( \vecf )
	= & \
	\frac { g_{ij,d} } { g_{ij,n} }
	\left( 1 + \frac 12 \Delta_i \Delta_j \frac { g_{ij,n} } { g_{ij,d} }\right)
	,
	\\
	\mathscr{M}_{ij}^m ( \vecf )
	= & \
	\frac { g_{ij,d} } { g_{ij,n} }
	\left[
	1
	-
	\frac 12 \Delta_i \Delta_j \frac { g_{ij,n} } { g_{ij,d} }
	+
	\frac { g_{ij,d} } { 2 g_{ij,n} }
	\left\{
	\left( \Delta_i \frac { g_{ij,n} } { g_{ij,d} } \right)
	\left( \Delta_j \frac { g_{ij,n} } { g_{ij,d} } \right)
	- 1
	\right\}
	\right]
	.
\end{align*}
We decompose them into
\[
	\mathscr{M}_{1,ij} ( \vecf )
	=
	\mathscr{P}_{1,ij} ( \vecf )
	+
	\mathscr{R}_{1,ij} ( \vecf ) ,
	\quad
	\mathscr{M}_{ij} ( \vecf )
	=
	\mathscr{P}_{ij} ( \vecf )
	+
	\mathscr{R}_{ij} ( \vecf )
	,
\]
where
\begin{align*}
	\mathscr{P}_{1,ij} ( \vecf )
	= & \
	\frac 12
	\frac { g_{ij,d} } { g_{ij,n} }
	\left\{
	\left(
	1 -
	\frac { \Delta_i \vecf } { \| \Delta_i \vecf \|_{ \mathbb{R}^n } }
	\cdot
	\frac { \Delta_j \vecf } { \| \Delta_j \vecf \|_{ \mathbb{R}^n } }
	\right)
	+
	\left(
	1 -
	\frac { \Delta_{ i+1 } \vecf } { \| \Delta_{ i+1 } \vecf \|_{ \mathbb{R}^n } }
	\cdot
	\frac { \Delta_{ j+1 } \vecf } { \| \Delta_{ j+1 } \vecf \|_{ \mathbb{R}^n } }
	\right)
	\right\}
	,
	\\
	\mathscr{P}_{ij} ( \vecf )
	= & \
	\frac { g_{ij,d} } { \bar g_{ij,n} }
	\left[
	1
	-
	\frac 
	1
	{ 2 g_{ij,d} }
	\left\{
	\Delta_i \Delta_j \bar g_{ij,n}
	+
	\frac { 4 \left( \Delta_i^j \vecf \cdot \Delta_i \vecf \right) \left( \Delta_i^j \vecf \cdot \Delta_j \vecf \right) }
	{ \bar g_{ij,n} }
	\right\}
	\right]
	.
\end{align*}
Then we will show
\be
	\sum_{ i \ne j }
	\mathscr{P}_{1,ij} ( \vecf )
	\to
	\mathcal{E}_1 ( \vecf )
	,
	\quad
	\sum_{ i \ne j }
	\mathscr{R}_{1,ij} ( \vecf )
	\to 0
	,
	\label{convergence_of_the_discrete_1st_energy}
\ee
\be
	\sum_{ i \ne j }
	\mathscr{P}_{ij} ( \vecf )
	\to
	\mathcal{E} ( \vecf ) - 4
	,
	\quad
	\sum_{ i \ne j }
	\mathscr{R}_{ij} ( \vecf )
	\to 0
	\label{convergence_of_the_discrete_energy}
\ee
as $ m \to \infty $.
\par
First of all,
we clarify how $ \mathscr{P}_{1,ij} $ comes from $ \mathscr{M}_{1,ij} $.
Using Fact \ref{fact},
we decompose $ \mathscr{M}_{1,ij}^m ( \vecf ) $ into four parts as
\begin{align*}
	\mathscr{M}_{1,ij}^m ( \vecf )
	= & \
	\mathrm{I} + \mathrm{II} + \mathrm{III} + \mathrm{IV}
	,
	\\
	\mathrm{I}
	= & \
	\frac { g_{ij,d} } { g_{ij,n} }
	\left\{
	1 +
	\frac { \Delta_i \Delta_j g_{ij,n} }
	{ 2 ( H_i \| \Delta_i \vecf \|_{ \mathbb{R}^n } ) ( H_j \| \Delta_j \vecf \|_{ \mathbb{R}^n } ) }
	\right\}
	,
	\\
	\mathrm{II}
	= & \
	\frac { g_{ij,d} } { g_{ij,n} }
	\frac { \Delta_j ( A_i g_{ij,n} ) } { 2 H_j \| \Delta_j \vecf \|_{ \mathbb{R}^n } }
	\Delta_i \left( \frac 1 { H_i \| \Delta_i \vecf \|_{ \mathbb{R}^n } } \right)
	,
	\\
	\mathrm{III}
	= & \
	\frac { g_{ij,d} } { g_{ij,n} }
	\frac { \Delta_i ( A_j g_{ij,n} ) } { 2 H_i \| \Delta_i \vecf \|_{ \mathbb{R}^n } }
	\Delta_j \left( \frac 1 { H_j \| \Delta_j \vecf \|_{ \mathbb{R}^n } } \right)
	,
	\\
	\mathrm{IV}
	= & \
	\frac { g_{ij,d} } { g_{ij,n} }
	\frac { A_{ij} g_{ij,n} } 2
	\left\{ \Delta_i \left( \frac 1 { \| \Delta_i \vecf \|_{ \mathbb{R}^n } } \right) \right\}
	\left\{ \Delta_j \left( \frac 1 { \| \Delta_j \vecf \|_{ \mathbb{R}^n } } \right) \right\}
	.
\end{align*}
Next we decompose $ \mathrm{I} $ into two parts.
Using Fact \ref{fact},
we have
\begin{align*}
	\Delta_j g_{ij,n}
	= & \
	\left( \Delta_j \| \Delta_i^j \vecf \|_{ \mathbb{R}^n } \right)
	\left( A_j \| \Delta_{ i+1 }^{ j+1 } \vecf \|_{ \mathbb{R}^n } \right)
	+
	\left( A_j \| \Delta_i^j \vecf \|_{ \mathbb{R}^n } \right)
	\left( \Delta_j \| \Delta_{ i+1 }^{ j+1 } \vecf \|_{ \mathbb{R}^n } \right)
	,
\end{align*}
\begin{align*}
	\Delta_i \Delta_j g_{ij,n}
	= & \
	\left( \Delta_i \Delta_j \| \Delta_i^j \vecf \|_{ \mathbb{R}^n } \right)
	\left( A_{ij} \| \Delta_{ i+1 }^{ j+1 } \vecf \|_{ \mathbb{R}^n } \right)
	\\
	& \quad
	+ \,
	\left( \Delta_j A_i \| \Delta_i^j \vecf \|_{ \mathbb{R}^n } \right)
	\left( \Delta_i A_j \| \Delta_{ i+1 }^{ j+1 } \vecf \|_{ \mathbb{R}^n } \right)
	\\
	& \quad
	+ \,
	\left( \Delta_i A_j \| \Delta_i^j \vecf \|_{ \mathbb{R}^n } \right)
	\left( \Delta_j A_i \| \Delta_{ i+1 }^{ j+1 } \vecf \|_{ \mathbb{R}^n } \right)
	\\
	& \quad
	+ \,
	\left( A_{ij} \| \Delta_i^j \vecf \|_{ \mathbb{R}^n } \right)
	\left( \Delta_i \Delta_j \| \Delta_{ i+1 }^{ j+1 } \vecf \|_{ \mathbb{R}^n } \right)
	.
\end{align*}
It holds that
\begin{align*}
	\Delta_i
	\| \Delta_i^j \vecf \|_{ \mathbb{R}^n }^2
	= & \
	- 2 A_i \Delta_i^j \vecf \cdot \Delta_i \vecf
	.
\end{align*}
On the other hand,
\begin{align*}
	\Delta_i
	\| \Delta_i^j \vecf \|_{ \mathbb{R}^n }^2
	= & \
	2 A_i \| \Delta_i^j \vecf \|_{ \mathbb{R}^n }
	\Delta_i \| \Delta_i^j \vecf \|_{ \mathbb{R}^n }
	.
\end{align*}
Therefore we obtain
\[
	\Delta_i
	\| \Delta_i^j \vecf \|_{ \mathbb{R}^n }
	=
	-
	\frac { A_i \Delta_i^j \vecf \cdot \Delta_i \vecf }
	{ A_i \| \Delta_i^j \vecf \|_{ \mathbb{R}^n } }
	.
\]
Similarly we have
\[
	\Delta_i
	\| \Delta_{ i+1 }^{ j+1 } \vecf \|_{ \mathbb{R}^n }
	=
	-
	\frac { A_i \Delta_{ i+1 }^{ j+1 } \vecf \cdot \Delta_{ i+1 } \vecf }
	{ A_i \| \Delta_{ i+1 }^{ j+1 } \vecf \|_{ \mathbb{R}^n } }
	,
\]
\[
	\Delta_j
	\| \Delta_i^j \vecf \|_{ \mathbb{R}^n }
	=
	\frac { A_j \Delta_i^j \vecf \cdot \Delta_j \vecf }
	{ A_j \| \Delta_i^j \vecf \|_{ \mathbb{R}^n } }
	,
	\quad
	\Delta_j
	\| \Delta_{ i+1 }^{ j+1 } \vecf \|_{ \mathbb{R}^n }
	=
	\frac { A_j \Delta_{ i+1 }^{ j+1 } \vecf \cdot \Delta_{ j+1 } \vecf }
	{ A_j \| \Delta_{ i+1 }^{ j+1 } \vecf \|_{ \mathbb{R}^n } }
	.
\]
It follows from
\[
	\Delta_j \| \Delta_i^j \vecf \|_{ \mathbb{R}^n }^2
	=
	2 A_j \Delta_i^j \vecf \cdot \Delta_j \vecf
\]
that
\[
	\Delta_i \Delta_j \| \Delta_i^j \vecf \|_{ \mathbb{R}^n }^2
	=
	2 A_j \Delta_i \Delta_i^j \vecf \cdot \Delta_j \vecf
	=
	- 2 \Delta_i \vecf \cdot \Delta_j \vecf .
\]
On the other hand,
\[
	\Delta_j \| \Delta_i^j \vecf \|_{ \mathbb{R}^n }^2
	=
	2 A_j \| \Delta_i^j \vecf \|_{ \mathbb{R}^n }
	\Delta_j \| \Delta_i^j \vecf \|_{ \mathbb{R}^n }
\]
derives
\begin{align*}
	\Delta_i \Delta_j \| \Delta_i^j \vecf \|_{ \mathbb{R}^n }^2
	= & \
	2
	\left( \Delta_i A_j \| \Delta_i^j \vecf \|_{ \mathbb{R}^n } \right)
	\left( \Delta_j A_i \| \Delta_i^j \vecf \|_{ \mathbb{R}^n } \right)
	\\
	& \quad
	+
	2
	\left( A_{ij} \| \Delta_i^j \vecf \|_{ \mathbb{R}^n } \right)
	\left( \Delta_i \Delta_i \| \Delta_i^j \vecf \|_{ \mathbb{R}^n } \right)
	.
\end{align*}
Hence
\[
	\Delta_i \Delta_j
	\| \Delta_i^j \vecf \|_{ \mathbb{R}^n }
	=
	-
	\frac
	{
	\Delta_i \vecf \cdot \Delta_j \vecf
	+
	\left( \Delta_i A_j \| \Delta_i^j \vecf \|_{ \mathbb{R}^n } \right)
	\left( \Delta_j A_i \| \Delta_i^j \vecf \|_{ \mathbb{R}^n } \right)
	}
	{ A_{ij} \| \Delta_i^j \vecf \|_{ \mathbb{R}^n } }
	.
\]
We obtain
\[
	\Delta_i \Delta_j
	\| \Delta_{ i+1 }^{ j+1 } \vecf \|_{ \mathbb{R}^n }
	=
	-
	\frac
	{
	\Delta_{ i+1 } \vecf \cdot \Delta_{ j+1 } \vecf
	+
	\left( \Delta_i A_j \| \Delta_{ i+1 }^{ j+1 } \vecf \|_{ \mathbb{R}^n } \right)
	\left( \Delta_j A_i \| \Delta_{ i+1 }^{ j+1 } \vecf \|_{ \mathbb{R}^n } \right)
	}
	{ A_{ij} \| \Delta_{ i+1 }^{ j+1 } \vecf \|_{ \mathbb{R}^n } }
\]
similarly.
Consequently,
\begin{align*}
	\Delta_i \Delta_j g_{ij,n}
	= & \
	-
	\frac
	{ A_{ij} \| \Delta_{ i+1 }^{ j+1 } \vecf \|_{ \mathbb{R}^n } }
	{ A_{ij} \| \Delta_i^j \vecf \|_{ \mathbb{R}^n } }
	\Delta_i \vecf \cdot \Delta_j \vecf
	-
	\frac
	{ A_{ij} \| \Delta_i^j \vecf \|_{ \mathbb{R}^n } }
	{ A_{ij} \| \Delta_{ i+1 }^{ j+1 } \vecf \|_{ \mathbb{R}^n } }
	\Delta_{i+1} \vecf \cdot \Delta_{j+1} \vecf
	\\
	& \quad
	- \,
	\frac
	{
	\left( A_{ij} \| \Delta_{ i+1 }^{ j+1 } \vecf \|_{ \mathbb{R}^n } \right)
	\left( \Delta_i A_j \| \Delta_i^j \vecf \|_{ \mathbb{R}^n } \right)
	\left( \Delta_j A_i \| \Delta_i^j \vecf \|_{ \mathbb{R}^n } \right)
	}
	{ A_{ij} \| \Delta_i^j \vecf \|_{ \mathbb{R}^n } }
	\\
	& \quad
	- \,
	\frac
	{
	\left( A_{ij} \| \Delta_i^j \vecf \|_{ \mathbb{R}^n } \right)
	\left( \Delta_i A_j \| \Delta_{ i+1 }^{ j+1 } \vecf \|_{ \mathbb{R}^n } \right)
	\left( \Delta_j A_i \| \Delta_{ i+1 }^{ j+1 } \vecf \|_{ \mathbb{R}^n } \right)
	}
	{ A_{ij} \| \Delta_{ i+1 }^{ j+1 } \vecf \|_{ \mathbb{R}^n } }
	\\
	& \quad
	+ \,
	\left( \Delta_j A_i \| \Delta_i^j \vecf \|_{ \mathbb{R}^n } \right)
	\left( \Delta_i A_j \| \Delta_{ i+1 }^{ j+1 } \vecf \|_{ \mathbb{R}^n } \right)
	\\
	& \quad
	+ \,
	\left( \Delta_i A_j \| \Delta_i^j \vecf \|_{ \mathbb{R}^n } \right)
	\left( \Delta_j A_i \| \Delta_{ i+1 }^{ j+1 } \vecf \|_{ \mathbb{R}^n } \right)
	.
\end{align*}
Using this,
we decomposed $ \mathrm{I} $ as
\begin{align*}
	\mathrm{I}
	= & \
	\mathrm{I}_1 + \mathrm{I}_2
	,
	\\
	\mathrm{I}_1
	= & \
	\frac { g_{ij,d} } { g_{ij,n} }
	\left[
	1
	-
	\frac 1
	{ 2
	( H_i \| \Delta_i \vecf \|_{ \mathbb{R}^n } ) ( H_j \| \Delta_j \vecf \|_{ \mathbb{R}^n } )
	}
	\right.
	\\
	& \quad \qquad \qquad
	\left.
	\times
	\left\{
	\frac
	{ A_{ij} \| \Delta_{ i+1 }^{ j+1 } \vecf \|_{ \mathbb{R}^n } }
	{ A_{ij} \| \Delta_i^j \vecf \|_{ \mathbb{R}^n } }
	\left( \Delta_i \vecf \cdot \Delta_j \vecf \right)
	+
	\frac
	{ A_{ij} \| \Delta_i^j \vecf \|_{ \mathbb{R}^n } }
	{ A_{ij} \| \Delta_{ i+1 }^{ j+1 } \vecf \|_{ \mathbb{R}^n } }
	\left( \Delta_{ i+1} \vecf \cdot \Delta_{ j+1 } \vecf \right)
	\right\}
	\right]
	,
	\\
	\mathrm{I}_2
	= & \
	\frac 12
	\frac { g_{ij,d} } { g_{ij,n} }
	\frac
	1
	{ ( H_i \| \Delta_i \vecf \|_{ \mathbb{R}^n } ) ( H_j \| \Delta_j \vecf \|_{ \mathbb{R}^n } ) }
	\\
	& \quad
	\times
	\left\{
	- \,
	\frac
	{
	\left( A_{ij} \| \Delta_{ i+1 }^{ j+1 } \vecf \|_{ \mathbb{R}^n } \right)
	\left( \Delta_i A_j \| \Delta_i^j \vecf \|_{ \mathbb{R}^n } \right)
	\left( \Delta_j A_i \| \Delta_i^j \vecf \|_{ \mathbb{R}^n } \right)
	}
	{ A_{ij} \| \Delta_i^j \vecf \|_{ \mathbb{R}^n } }
	\right.
	\\
	& \quad \qquad
	\left.
	- \,
	\frac
	{
	\left( A_{ij} \| \Delta_i^j \vecf \|_{ \mathbb{R}^n } \right)
	\left( \Delta_i A_j \| \Delta_{ i+1 }^{ j+1 } \vecf \|_{ \mathbb{R}^n } \right)
	\left( \Delta_j A_i \| \Delta_{ i+1 }^{ j+1 } \vecf \|_{ \mathbb{R}^n } \right)
	}
	{ A_{ij} \| \Delta_{ i+1 }^{ j+1 } \vecf \|_{ \mathbb{R}^n } }
	\right.
	\\
	& \quad \qquad
	\left.
	+ \,
	\left( \Delta_j A_i \| \Delta_i^j \vecf \|_{ \mathbb{R}^n } \right)
	\left( \Delta_i A_j \| \Delta_{ i+1 }^{ j+1 } \vecf \|_{ \mathbb{R}^n } \right)
	\right.
	\\
	& \quad \qquad
	\left.
	+ \,
	\left( \Delta_i A_j \| \Delta_i^j \vecf \|_{ \mathbb{R}^n } \right)
	\left( \Delta_j A_i \| \Delta_{ i+1 }^{ j+1 } \vecf \|_{ \mathbb{R}^n } \right)
	\vphantom{
	\frac
	{
	\left( A_{ij} \| \Delta_{ i+1 }^{ j+1 } \vecf \|_{ \mathbb{R}^n } \right)
	\left( A_j \Delta_i \| \Delta_i^j \vecf \|_{ \mathbb{R}^n } \right)
	\left( A_i \Delta_j \| \Delta_i^j \vecf \|_{ \mathbb{R}^n } \right)
	}
	{ A_{ij} \| \Delta_i^j \vecf \|_{ \mathbb{R}^n } }
	}
	\right\}
	.
\end{align*}
Furthermore,
$ \mathrm{I}_1 $ is decomposed into three parts:
\begin{align*}
	\mathrm{I}_1
	= & \
	\mathrm{I}_{11} + \mathrm{I}_{12} + \mathrm{I}_{13}
	,
	\\
	\mathrm{I}_{11}
	= & \
	\frac 12
	\frac { g_{ij,d} } { g_{ij,n} }
	\left\{
	\left(
	1 -
	\frac { \Delta_i \vecf } { \| \Delta_i \vecf \|_{ \mathbb{R}^n } }
	\cdot
	\frac { \Delta_j \vecf } { \| \Delta_j \vecf \|_{ \mathbb{R}^n } }
	\right)
	+
	\left(
	1 -
	\frac { \Delta_{ i+1 } \vecf } { \| \Delta_{ i+1 } \vecf \|_{ \mathbb{R}^n } }
	\cdot
	\frac { \Delta_{ j+1 } \vecf } { \| \Delta_{ j+1 } \vecf \|_{ \mathbb{R}^n } }
	\right)
	\right\}
	,
	\\
	\mathrm{I}_{12}
	= & \
	\frac 12
	\frac { g_{ij,d} } { g_{ij,n} }
	\left\{
	\left(
	1
	-
	\frac
	{ A_{ij} \| \Delta_{ i+1 }^{ j+1 } \vecf \|_{ \mathbb{R}^n } }
	{ A_{ij} \| \Delta_i^j \vecf \|_{ \mathbb{R}^n } }
	\right)
	\frac { \Delta_i \vecf } { \| \Delta_i \vecf \|_{ \mathbb{R}^n } }
	\cdot
	\frac { \Delta_j \vecf } { \| \Delta_j \vecf \|_{ \mathbb{R}^n } }
	\right.
	\\
	& \quad \qquad \qquad
	\left.
	+ \,
	\left(
	1
	-
	\frac
	{ A_{ij} \| \Delta_i^j \vecf \|_{ \mathbb{R}^n } }
	{ A_{ij} \| \Delta_{ i+1 }^{ j+1 } \vecf \|_{ \mathbb{R}^n } }
	\right)
	\frac { \Delta_{ i+1 } \vecf } { \| \Delta_{ i+1} \vecf \|_{ \mathbb{R}^n } }
	\cdot
	\frac { \Delta_{ j+1 } \vecf } { \| \Delta_{ j+1 } \vecf \|_{ \mathbb{R}^n } }
	\right\}
	,
	\\
	\mathrm{I}_{13}
	= & \
	\frac 12
	\frac { g_{ij,d} } { g_{ij,n} }
	\left\{
	\frac 1
	{ \| \Delta_i \vecf \|_{ \mathbb{R}^n } \| \Delta_j \vecf \|_{ \mathbb{R}^n } }
	-
	\frac 1
	{ ( H_i \| \Delta_i \vecf \|_{ \mathbb{R}^n } ) ( H_j \| \Delta_j \vecf \|_{ \mathbb{R}^n } ) }
	\right\}
	\\
	& \quad
	\times
	\left\{
	\frac
	{ A_{ij} \| \Delta_{ i+1 }^{ j+1 } \vecf \|_{ \mathbb{R}^n } }
	{ A_{ij} \| \Delta_i^j \vecf \|_{ \mathbb{R}^n } }
	\left( \Delta_i \vecf \cdot \Delta_j \vecf \right)
	+
	\frac
	{ A_{ij} \| \Delta_i^j \vecf \|_{ \mathbb{R}^n } }
	{ A_{ij} \| \Delta_{ i+1 }^{ j+1 } \vecf \|_{ \mathbb{R}^n } }
	\left( \Delta_{ i+1 } \vecf \cdot \Delta_{ j+1 } \vecf \right)
	\right\}
	.
\end{align*}
Now put
\[
	\mathscr{P}_{1,ij}^m ( \vecf )
	= \mathrm{I}_{11} ,
	\quad
	\mathscr{R}_{1,ij}^m ( \vecf )
	=
	\mathrm{I}_{12} + \mathrm{I}_{13} + \mathrm{I}_2 +
	\mathrm{II} + \mathrm{III} + \mathrm{IV}
	.
\]
Since
\[
	\mathscr{P}_{1,ij}^m ( \vecf )
	=
	\frac 14
	\frac { g_{ij,d} } { g_{ij,n} }
	\left\{
	\left\|
	\frac { \Delta_i \vecf } { \| \Delta_i \vecf \|_{ \mathbb{R}^n } }
	-
	\frac { \Delta_j \vecf } { \| \Delta_j \vecf \|_{ \mathbb{R}^n } }
	\right\|_{ \mathbb{R}^n }^2
	+
	\left\|
	\frac { \Delta_{ i+1 } \vecf } { \| \Delta_{ i+1 } \vecf \|_{ \mathbb{R}^n } }
	-
	\frac { \Delta_{ j+1 } \vecf } { \| \Delta_{ j+1 } \vecf \|_{ \mathbb{R}^n } }
	\right\|_{ \mathbb{R}^n }^2
	\right\}
	\geqq 0
	,
\]
we have
\[
	\mathscr{P}_{1,ij}^m ( \vecf )
	\to
	\frac 12
	\frac { \| \Delta_i^j \vectau \|_{ \mathbb{R}^n }^2 }
	{ \| \Delta_i^j \vecf \|_{ \mathbb{R}^n }^2 }
	\| \dot { \vecf } ( \theta_i ) \|_{ \mathbb{R}^n }
	\| \dot { \vecf } ( \theta_j ) \|_{ \mathbb{R}^n }
	d \theta_i d \theta_j
\]
as $ m \to \infty $ formally,
which is the energy density of $ \mathcal{E}_1 ( \vecf ) $ at $ ( \theta_i , \theta_j ) $.
Similarly $ \mathscr{R}_{1,ij}^m ( \vecf ) \to 0 $ as $ m \to \infty $ formally.
Hence we expect \pref{convergence_of_the_discrete_1st_energy}.
\par
Similarly $ \mathscr{M}_{ij}^m $ is decomposed as
\[
	\mathscr{M}_{ij}^m ( \vecf )
	=
	\mathrm{J}_1
	+
	\mathrm{J}_2
	+
	\mathrm{J}_3
	+
	\mathrm{J}_4
	+
	\mathrm{J}_5
	,
\]
where
\begin{align*}
	\mathrm{J}_1
	= & \
	\frac { g_{ij,d} } { g_{ij,n} }
	\left[
	1
	-
	\frac 1
	{ 2 g_{ij,d} }
	\left\{
	\Delta_i \Delta_j g_{ij,n}
	-
	\frac {
	\left( \Delta_i g_{ij,n} \right)
	\left( \Delta_j g_{ij,n} \right)
	}
	{ g_{ij,n} }
	\right\}
	- \frac { g_{ij,d} } { 2 g_{ij,n} }
	\right]
	,
	\\
	\\
	\mathrm{J}_2
	= & \
	\frac { g_{ij,d} } { 2 g_{ij,n} }
	\left\{
	\frac 1 { g_{ij,d} }
	-
	\frac 1
	{ ( H_i \| \Delta_i \vecf \|_{ \mathbb{R}^n } ) ( H_j \| \Delta_j \vecf \|_{ \mathbb{R}^n } ) }
	\right\}
	\left\{
	\Delta_i \Delta_j g_{ij,n}
	-
	\frac {
	\left( \Delta_i g_{ij,n} \right)
	\left( \Delta_j g_{ij,n} \right)
	}
	{ g_{ij,n} }
	\right\}
	,
	\\
	\mathrm{J}_3
	= & \
	-
	\frac { g_{ij,d} } { 2 g_{ij,n} H_j \| \Delta_i \vecf \|_{ \mathbb{R}^n } }
	\left\{
	\left( \Delta_j A_i g_{ij,n} \right)
	\left( \Delta_i \frac 1 { H_i \| \Delta_i \vecf \|_{ \mathbb{R}^n } } \right)
	-
	\frac { A_i g_{ij,n} } { g_{ij,n} }
	\left( \Delta_j g_{ij,n} \right)
	\left( \Delta_i \frac 1 { \| \Delta_i \vecf \|_{ \mathbb{R}^n } } \right)
	\right\}
	,
	\\
	\mathrm{J}_4
	= & \
	-
	\frac { g_{ij,d} } { 2 g_{ij,n} H_i \| \Delta_i \vecf \|_{ \mathbb{R}^n } }
	\left\{
	\left( \Delta_i A_j g_{ij,n} \right)
	\left( \Delta_j \frac 1 { H_j \| \Delta_i \vecf \|_{ \mathbb{R}^n } } \right)
	-
	\frac { A_j g_{ij,n} } { g_{ij,n} }
	\left( \Delta_i g_{ij,n} \right)
	\left( \Delta_j \frac 1 { \| \Delta_i \vecf \|_{ \mathbb{R}^n } } \right)
	\right\}
	,
	\\
	\mathrm{J}_5
	= & \
	-
	\frac { g_{ij,d} } { 2 g_{ij,n} }
	\left\{
	A_{ij} g_{ij,n}
	-
	\frac {
	\left( A_i g_{ij,n} \right)
	\left( A_j g_{ij,n} \right)
	}
	{ g_{ij,n} }
	\right\}
	\left( \Delta_i \frac 1 { \| \Delta_i \vecf \|_{ \mathbb{R}^n } } \right)
	\left( \Delta_j \frac 1 { \| \Delta_j \vecf \|_{ \mathbb{R}^n } } \right)
	.
\end{align*}
Futhermore we decomposed $ \mathrm{J}_1 $ as
\begin{align*}
	\mathrm{J}_1 = & \
	\mathrm{J}_{11} + \mathrm{J}_{12} + \mathrm{J}_{13} + \mathrm{J}_{14}
	,
	\\
	\mathrm{J}_{11}
	= & \
	\frac { g_{ij,d} } { \bar g_{ij,n} }
	\left[
	1
	-
	\frac 
	1
	{ 2 g_{ij,d} }
	\left\{
	\Delta_i \Delta_j \bar g_{ij,n}
	+
	\frac { 4 \left( \Delta_i^j \vecf \cdot \Delta_i \vecf \right) \left( \Delta_i^j \vecf \cdot \Delta_j \vecf \right) }
	{ \bar g_{ij,n} }
	\right\}
	\right]
	,
	\\
	\mathrm{J}_{12}
	= & \
	g_{ij,d}
	\left(
	\frac 1 { g_{ij,n} } - \frac 1 { \bar g_{ij,n} }
	\right)
	,
	\\
	\mathrm{J}_{13}
	= & \
	-
	\frac 1 { 2 g_{ij,n} } \Delta_i \Delta_j g_{ij,n}
	+
	\frac 1 { 2 \bar g_{ij,n} } \Delta_i \Delta_j \bar g_{ij,n}
	,
	\\
	\mathrm{J}_{14}
	= & \
	\frac 12
	\left\{
	\frac { \left( \Delta_i g_{ij,n} \right) \left( \Delta_j g_{ij,n} \right) }
	{ g_{ij,n}^2 }
	+
	\frac { 4 \left( \Delta_i^j \vecf \cdot \Delta_i \vecf \right) \left( \Delta_i^j \vecf \cdot \Delta_j \vecf \right) }
	{ \bar g_{ij,n}^2 }
	- \left( \frac { g_{ij,d} } { g_{ij,n}^2 } \right)^2
	\right\}
	.
\end{align*}
We put
\[
	\mathscr{P}_{ij}^m ( \vecf ) = \mathrm{J}_{11} ,
	\quad
	\mathscr{R}_{ij}^m ( \vecf )
	=
	\mathrm{J}_{12} + \mathrm{J}_{13} + \mathrm{J}_{14} + \mathrm{J}_2 + \mathrm{J}_3 + \mathrm{J}_4 + \mathrm{J}_5 .
\]
Since $ J_{11} $ can be rewritten as
\begin{align}
	\mathrm{J}_{11}
	= & \
	\frac 12
	\frac { g_{ij,d} } { \bar g_{ij,n} }
	\left[
	\left\|
	\frac { \Delta_i^j \vecf } { \| \Delta_i^j \vecf \|_{ \mathbb{R}^n } }
	\bigwedge
	\left(
	\frac { \Delta_i \vecf } { \| \Delta_i \vecf \|_{ \mathbb{R}^n } }
	+
	\frac { \Delta_j \vecf } { \| \Delta_j \vecf \|_{ \mathbb{R}^n } }
	\right)
	\right\|_{ \wedge^2 \mathbb{R}^n }^2
	\right.
	\label{J_11}
	\\
	& \quad \qquad
	\left.
	+ \,
	\left\{
	\frac { \Delta_i^j \vecf } { \| \Delta_i^j \vecf \|_{ \mathbb{R}^n } }
	\cdot
	\left(
	\frac { \Delta_i \vecf } { \| \Delta_i \vecf \|_{ \mathbb{R}^n } }
	-
	\frac { \Delta_j \vecf } { \| \Delta_j \vecf \|_{ \mathbb{R}^n } }
	\right)
	\right\}^2
	\right]
	,
	\nonumber
\end{align}
it formally holds that
\begin{align*}
	\mathscr{P}_{ij}^m ( \vecf )
	\to & \
	\frac 12
	\left\{
	\left\|
	\frac { \Delta_i^j \vecf } { \| \Delta_i^j \vecf \|_{ \mathbb{R}^n } }
	\bigwedge
	( \vectau ( \theta_i ) + \vectau ( \theta_j ) )
	\right\|_{ \wedge^2 \mathbb{R}^n }^2
	+
	\left(
	\frac { \Delta_i^j \vecf } { \| \Delta_i^j \vecf \|_{ \mathbb{R}^n } }
	\cdot
	\Delta_i^j \vectau
	\right)^2
	\right\}
	\\
	& \quad \qquad
	\times
	\frac { \| \dot { \vecf } ( \theta_i ) \|_{ \mathbb{R}^n } \| \dot { \vecf }( \theta_j ) \|_{ \mathbb{R}^n } }
	{ \| \Delta_i^j \vecf \|_{ \mathbb{R}^n }^2 }
	\,
	d \theta_i d \theta_j
\end{align*}
as $ m \to \infty $,
which is the energy density of $ \mathcal{E} ( \cdot ) - 4 $ at $ ( \theta_i , \theta_j ) $.
On the other hand,
we can expect $ \mathscr{R}_{ij}^m ( \vecf ) \to 0 $ as $ m \to \infty $.
\subsection{Estimates}
\par
We would like to show the convergence of our discrete energies.
In this subsection,
we will derive several estimates for proving the convergence.
It is known that $ \vecf $ is bi-Lipschitz if $ \mathcal{E} ( \vecf ) < \infty $.
Therefore it is natural to assume the bi-Lipschitz continuity of $ \vecf $.
Then there exist positive constant $ L_1 $ and $ L_2 $ independent of
$ i $,
$ j $,
and
$ m $ such that
\be
	L_1 d_{\mathbb{R}/\mathbb{Z}} ( \theta_i , \theta_j )
	\leqq
	\| \Delta_i^j \vecf \|_{ \mathbb{R}^n }
	\leqq
	L_2 d_{\mathbb{R}/\mathbb{Z}} ( \theta_i , \theta_j )
	\label{bi-Lipschitz}
	.
\ee
Here
$ d_{\mathbb{R}/\mathbb{Z}} ( \cdot , \cdot ) $ is the distance of $ \mathbb{R}/\mathbb{Z} $.
\par
Furthermore we assume the equi-lateral condition,
{\it i.e.},
\be
	\| \Delta_i \vecf \|_{ \mathbb{R}^n } = \frac {L_m } m
	\quad ( i = 1 , \ \cdots , m )
	.
	\label{equilateral}
\ee
Here $ L_m $ is the total length of $ m $-polygon with the $ i $-th vertex $ \vecf ( \theta_i ) $.
\[
	L_m \leqq L
\]
is obvious.
We may assume that $ m $ is sufficiently large so that
\[
	d_{ \mathbb{R}/\mathbb{Z} } ( \theta_i , \theta_{ i+1 } ) = \Delta_i \theta .
\]
It follows from \pref{bi-Lipschitz} and \pref{equilateral} that
\[
	L_1 \Delta_i \theta
	\leqq
	\| \Delta_i \vecf \|_{ \mathbb{R}^n }
	=
	\frac { L_m } m
	\leqq
	L_2 \Delta_i \theta
	,
\]
and hence we have
\[
	\frac { L_m } { L_2 m } \leqq \Delta_i \theta \leqq \frac { L_m } { L_1 m }
	\leqq \frac L { L_1 m } .
\]
In particular,
\[
	\Delta_i \theta \to 0 \quad \mbox{as } m \to \infty
\]
holds.
we can estimate $ \Delta_i \theta $ from below by $ \frac C m $.
To show this,
it it sufficient to see $ L_m \to L $ as $ m \to \infty $.
\par
Let $ I $,
$ J \subset \mathbb{R} / \mathbb{Z} $ be measurable sets, not necessarily intervals, and we put
\[
	[u]_{ H^{ \frac 12 } ( I \times J ) }
	=
	\left(
	\iint_{ I \times J } \frac { | \Delta_i^j u |^2 } { | \Delta_i^j \theta |^2 } d \theta_i d \theta_j
	\right)^{ \frac 12 }
\]
for $ H^{ \frac 12 } ( I \times J ) $.
When we consider an $ \mathbb{R}^n $-valued function,
we define
\[
	[ \vecu ]_{ H^{ \frac 12 } ( I \times J ) }
	=
	\left(
	\iint_{ I \times J } \frac { \| \Delta_i^j \vecu \|_{ \mathbb{R}^n }^2 } { | \Delta_i^j \theta |^2 } d \theta_i d \theta_j
	\right)^{ \frac 12 } .
\]
Put
$ I_i = [ \theta_i , \theta_{ i+1 } ] $,
$ I_j = [ \theta_j , \theta_{ j+1 } ] $.
\begin{lem}
It holds for $ \vecf \in H^{ \frac 32 } ( \mathbb{R}/ \mathbb{Z} ) $ that
\[
	0 \leqq L - L_m
	\leqq
	\frac { 2 L } { \sqrt 6 \, m }
	[ \dot { \vecf } ]_{ H^{ \frac 12 } ( \mathbb{R}/ \mathbb{Z} ) }
	.
\]
In particular
\be
	\frac L2 \leqq L_m
	\label{lower_bound_of_Lm}
\ee
holds for sufficiently large $ m $.
\end{lem}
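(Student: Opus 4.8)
The plan is to estimate $ L - L_m $ edge by edge, comparing each arc $ \vecf|_{I_i} $ with its chord $ \Delta_i \vecf $, and then to sum. The non-negativity is immediate: since $ \Delta_i \vecf = \int_{I_i} \dot{\vecf} \, d\theta $ we have $ \| \Delta_i \vecf \|_{ \mathbb{R}^n } \leqq \int_{I_i} \| \dot{\vecf} \|_{ \mathbb{R}^n } \, d\theta $, and summing over $ i $ gives $ L_m \leqq L $, because $ \sum_i \int_{I_i} \| \dot{\vecf} \|_{ \mathbb{R}^n } \, d\theta = L $ and $ \sum_i \| \Delta_i \vecf \|_{ \mathbb{R}^n } = L_m $. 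Hence it remains to bound the non-negative quantity $ L - L_m = \sum_i \bigl( \int_{I_i} \| \dot{\vecf} \|_{ \mathbb{R}^n } \, d\theta - \| \Delta_i \vecf \|_{ \mathbb{R}^n } \bigr) $ from above.

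For the per-edge estimate, write $ \Delta_i \theta = \theta_{ i+1 } - \theta_i $ and let $ \vecw_i = ( \Delta_i \theta )^{-1} \Delta_i \vecf = ( \Delta_i \theta )^{-1} \int_{I_i} \dot{\vecf} \, d\theta $ be the mean velocity on $ I_i $, so $ \| \Delta_i \vecf \|_{ \mathbb{R}^n } = \Delta_i \theta \, \| \vecw_i \|_{ \mathbb{R}^n } $. From $ \| \dot{\vecf}(\theta) \|_{ \mathbb{R}^n } - \| \vecw_i \|_{ \mathbb{R}^n } \leqq \| \dot{\vecf}(\theta) - \vecw_i \|_{ \mathbb{R}^n } $ and $ \dot{\vecf}(\theta) - \vecw_i = ( \Delta_i \theta )^{-1} \int_{I_i} ( \dot{\vecf}(\theta) - \dot{\vecf}(\sigma) ) \, d\sigma $, integrating in $ \theta $ over $ I_i $ gives
\[
	\int_{I_i} \| \dot{\vecf} \|_{ \mathbb{R}^n } \, d\theta - \| \Delta_i \vecf \|_{ \mathbb{R}^n }
	\leqq
	\frac{1}{\Delta_i \theta}
	\iint_{ I_i \times I_i }
	\| \dot{\vecf}(\theta) - \dot{\vecf}(\sigma) \|_{ \mathbb{R}^n } \, d\theta \, d\sigma ,
\]
and then, writing the integrand as $ \| \dot{\vecf}(\theta) - \dot{\vecf}(\sigma) \|_{ \mathbb{R}^n } | \theta - \sigma |^{-1} \cdot | \theta - \sigma | $, the Cauchy--Schwarz inequality together with $ \iint_{ I_i \times I_i } | \theta - \sigma |^2 \, d\theta \, d\sigma = ( \Delta_i \theta )^4 / 6 $ yields the clean bound
\[
	\int_{I_i} \| \dot{\vecf} \|_{ \mathbb{R}^n } \, d\theta - \| \Delta_i \vecf \|_{ \mathbb{R}^n }
	\leqq
	\frac{\Delta_i \theta}{\sqrt{6}} \,
	[ \dot{\vecf} ]_{ H^{ \frac12 } ( I_i \times I_i ) } .
\]
A variant better adapted to the sharp rate starts instead from $ \ell_i - e_i \leqq ( \ell_i^2 - e_i^2 ) / ( 2 e_i ) $ with $ \ell_i = \int_{I_i} \| \dot{\vecf} \|_{ \mathbb{R}^n } \, d\theta $, $ e_i = \| \Delta_i \vecf \|_{ \mathbb{R}^n } $ (valid since $ \ell_i + e_i \geqq 2 e_i $), combined with the elementary identity $ 2 ( \| \veca \|_{ \mathbb{R}^n } \| \vecb \|_{ \mathbb{R}^n } - \veca \cdot \vecb ) = \| \veca - \vecb \|_{ \mathbb{R}^n }^2 - ( \| \veca \|_{ \mathbb{R}^n } - \| \vecb \|_{ \mathbb{R}^n } )^2 \leqq \| \veca - \vecb \|_{ \mathbb{R}^n }^2 $, which gives $ \ell_i^2 - e_i^2 \leqq \frac12 ( \Delta_i \theta )^2 [ \dot{\vecf} ]_{ H^{ \frac12 } ( I_i \times I_i ) }^2 $ and hence $ \ell_i - e_i \leqq ( \Delta_i \theta )^2 ( 4 e_i )^{-1} [ \dot{\vecf} ]_{ H^{ \frac12 } ( I_i \times I_i ) }^2 $.

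Finally I would sum over $ i $. The squares $ I_i \times I_i $ are pairwise disjoint subsets of $ ( \mathbb{R} / \mathbb{Z} )^2 $, so $ \sum_i [ \dot{\vecf} ]_{ H^{ \frac12 } ( I_i \times I_i ) }^2 \leqq [ \dot{\vecf} ]_{ H^{ \frac12 } ( \mathbb{R} / \mathbb{Z} ) }^2 $; using this, the Cauchy--Schwarz inequality in $ i $, and the mesh bound $ \Delta_i \theta \leqq \| \Delta_i \vecf \|_{ \mathbb{R}^n } / L_1 = L_m / ( L_1 m ) $ from \pref{bi-Lipschitz} and \pref{equilateral} (exactly as in the computation just before the statement, and together with $ e_i = L_m / m $ in the second variant), one controls $ \sum_i ( \Delta_i \theta )^2 $ and arrives at an estimate $ L - L_m \leqq C m^{-1} [ \dot{\vecf} ]_{ H^{ \frac12 } ( \mathbb{R} / \mathbb{Z} ) } $ of the asserted type. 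Once this is in hand, the right-hand side tends to $ 0 $ as $ m \to \infty $ since $ [ \dot{\vecf} ]_{ H^{ \frac12 } ( \mathbb{R} / \mathbb{Z} ) } < \infty $ for $ \vecf \in H^{ \frac32 } $, so $ L_m \to L $; in particular $ \frac{L}{2} \leqq L_m $ for all sufficiently large $ m $, which is \pref{lower_bound_of_Lm}. The only genuinely delicate point is quantitative: extracting the precise constant $ 2 L / \sqrt{6} $ — with linear, rather than quadratic, dependence on the seminorm — forces one to be careful about which per-edge inequality is used and about how the mesh sizes $ \Delta_i \theta $ are combined; the qualitative conclusion $ L_m \to L $, which is all the convergence proof actually needs, is robust and follows from either route above.
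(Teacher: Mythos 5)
Your proposal follows the paper's route exactly: compare each arc with its chord via the $H^{1/2}$-seminorm localized to $I_i\times I_i$, apply Cauchy--Schwarz, and sum over $i$. Your per-edge bound $\int_{I_i}\|\dot{\vecf}\|_{\mathbb{R}^n}\,d\theta - \|\Delta_i\vecf\|_{\mathbb{R}^n}\leqq \frac{\Delta_i\theta}{\sqrt 6}\,[\dot{\vecf}]_{H^{1/2}(I_i\times I_i)}$ is in fact a factor of $2$ sharper than the paper's: the paper fixes $\theta_\ast\in I_i$, applies the triangle inequality to both terms, and averages over $\theta_\ast$ only afterwards, whereas you compare against the mean velocity $\vecw_i$ from the start and thus save one application. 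Your caution about the summation step is well placed --- the paper's displayed chain passes from the per-edge estimate directly to $\frac{2L}{\sqrt 6\, m}[\dot{\vecf}]_{H^{1/2}(\mathbb{R}/\mathbb{Z})}$ without showing how the mesh sizes $\Delta_i\theta$ are combined, and a direct Cauchy--Schwarz in $i$ together with $\sum_i(\Delta_i\theta)^2\leqq\max_i\Delta_i\theta\cdot\sum_i\Delta_i\theta$ naturally yields only an $m^{-1/2}$ rate (your first route), while your second route trades that for $m^{-1}$ with the seminorm squared. As you observe, though, the qualitative conclusion $L_m\to L$, which is all that \pref{lower_bound_of_Lm} and the subsequent convergence argument actually use, follows on any of these variants.
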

\begin{proof}
Let $ \theta_\ast \in [ \theta_i , \theta_{ i+1 } ] $.
Then we have
\begin{align*}
	&
	\int_{ \theta_i }^{ \theta_{ i+1 } }
	\left\| \dot { \vecf } ( \theta ) \right\|_{ \mathbb{R}^n } d \theta
	-
	\left\|
	\int_{ \theta_i }^{ \theta_{ i+1 } } \dot { \vecf } ( \theta ) \, d \theta
	\right\|_{ \mathbb{R}^n }
	\\
	& \quad
	\leqq
	\int_{ \theta_i }^{ \theta_{ i+1 } }
	\left\| \dot { \vecf } ( \theta ) - \dot { \vecf } ( \theta_\ast ) \right\|_{ \mathbb{R}^n } d \theta
	+
	\left\| \int_{ \theta_i }^{ \theta_{ i+1 } }
	\left( \dot { \vecf } ( \theta ) - \dot { \vecf } ( \theta_\ast ) \right) d \theta
	\right\|_{ \mathbb{R}^n }
	\\
	& \quad
	\leqq
	2
	\int_{ \theta_i }^{ \theta_{ i+1 } }
	\left\| \dot { \vecf } ( \theta ) - \dot { \vecf } ( \theta_\ast ) \right\|_{ \mathbb{R}^n } d \theta
	.
\end{align*}
We integrate this with respect to $ \theta_\ast $ on $ I_i $,
and divide the result by $ \Delta_i \theta $ to get
\begin{align*}
	&
	\int_{ \theta_i }^{ \theta_{ i+1 } }
	\left\| \dot { \vecf } ( \theta ) \right\|_{ \mathbb{R}^n } d \theta
	-
	\left\|
	\int_{ \theta_i }^{ \theta_{ i+1 } } \dot { \vecf } ( \theta ) \, d \theta
	\right\|_{ \mathbb{R}^n }
	\\
	& \quad
	\leqq
	\frac 2 { \Delta_i \theta }
	\int_{ \theta_i }^{ \theta_{ i+1 } }
	\int_{ \theta_i }^{ \theta_{ i+1 } }
	\left\| \dot { \vecf } ( \theta ) - \dot { \vecf } ( \theta_\ast ) \right\|_{ \mathbb{R}^n }
	d \theta d \theta_\ast
	\\
	& \quad
	\leqq
	\frac 2 { \Delta_i \theta }
	\left(
	\int_{ \theta_i }^{ \theta_{ i+1 } }
	\int_{ \theta_i }^{ \theta_{ i+1 } }
	| \theta - \theta_\ast |^2
	d \theta d \theta_\ast
	\right)^{ \frac 12 }
	\left(
	\int_{ \theta_i }^{ \theta_{ i+1 } }
	\int_{ \theta_i }^{ \theta_{ i+1 } }
	\frac {
	\left\| \dot { \vecf } ( \theta ) - \dot { \vecf } ( \theta_\ast ) \right\|_{ \mathbb{R}^n }^2
	}
	{ | \theta - \theta_\ast |^2 }
	d \theta d \theta_\ast
	\right)^{ \frac 12 }
	\\
	& \quad
	\leqq
	\frac { 2 \Delta_i \theta } { \sqrt 6 }
	[ \dot { \vecf } ]_{ H^{ \frac 12 } ( I_i \times I_i ) }
	.
\end{align*}
Summing with respect to $ i $,
we obtain
\begin{align*}
	0 
	\leqq & \
	L - L_m
	\\
	\leqq & \
	\sum_{ i=1 }^m
	\left(
	\int_{ \theta_i }^{ \theta_{ i+1 } }
	\left\| \dot { \vecf } ( \theta ) \right\|_{ \mathbb{R}^n } d \theta
	-
	\left\|
	\int_{ \theta_i }^{ \theta_{ i+1 } } \dot { \vecf } ( \theta ) \, d \theta
	\right\|_{ \mathbb{R}^n }
	\right)
	\leqq 
	\frac { 2 L } { \sqrt 6 \, m }
	[ \dot { \vecf } ]_{ H^{ \frac 12 } ( \mathbb{R}/ \mathbb{Z} ) }
	.
\end{align*}
\qed
\end{proof}
\par
By this lemma,
we may assume
\be
	\frac L { 2 L_1 m } \leqq \Delta_i \theta \leqq \frac L { L_2 m }
	\label{estimete_Delta_theta}
\ee
for sufficiently large $ m $.
\begin{lem}
Under {\rm \pref{equilateral}},
\[
	\| \Delta_i \vecf - \Delta_{ i+1 } \vecf \|_{ \mathbb{R}^n }
	\leqq
	2 \Delta_i^{ i+2 } \theta [ \dot { \vecf } ]_{ H^{ \frac 12 } ( [ \theta_i , \theta_{ i+2 } ]^2 ) }
\]
holds for $ \vecf \in H^{ \frac 32 } ( \mathbb{R} / \mathbb{Z} ) $.
\label{estimate_Delta(i+1)-Deltai}
\end{lem}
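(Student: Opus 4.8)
The plan is to write the difference $\Delta_i \vecf - \Delta_{i+1}\vecf$ as an integral of $\dot{\vecf}$ and then exploit the equi-lateral condition, much as in the previous lemma. Concretely, since $\Delta_i\vecf = \int_{\theta_i}^{\theta_{i+1}}\dot{\vecf}(\theta)\,d\theta$ and $\Delta_{i+1}\vecf = \int_{\theta_{i+1}}^{\theta_{i+2}}\dot{\vecf}(\theta)\,d\theta$, a change of variables in the second integral (matching parameter positions within the two subintervals via the equi-lateral condition, which by \pref{equilateral} makes the two arcs comparable in parameter length) lets me write
\[
	\Delta_i\vecf - \Delta_{i+1}\vecf
	= \int_{\theta_i}^{\theta_{i+1}} \bigl( \dot{\vecf}(\theta) - \dot{\vecf}(\theta + \Delta_i\theta) \bigr)\, d\theta
\]
up to handling the (possibly) unequal lengths $\Delta_i\theta$ and $\Delta_{i+1}\theta$. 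Taking norms and applying the triangle inequality inside the integral, I bound $\|\Delta_i\vecf - \Delta_{i+1}\vecf\|_{\mathbb{R}^n}$ by $\int_{\theta_i}^{\theta_{i+1}} \| \dot{\vecf}(\theta) - \dot{\vecf}(\theta+\Delta_i\theta) \|_{\mathbb{R}^n}\, d\theta$.

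Next I would introduce the $H^{1/2}$-seminorm by the standard trick of dividing and multiplying by $|\theta - (\theta + \Delta_i\theta)| = \Delta_i\theta$ (or the relevant shift) and applying Cauchy--Schwarz over the two-dimensional domain $[\theta_i,\theta_{i+2}]^2$. This converts the $L^1$ bound on the first difference into a product of $[\dot{\vecf}]_{H^{1/2}([\theta_i,\theta_{i+2}]^2)}$ and an elementary geometric factor coming from $\bigl(\int\int |\theta-\theta'|\,d\theta\,d\theta'\bigr)^{1/2}$-type quantities over intervals of length comparable to $\Delta_i^{i+2}\theta = \Delta_i\theta + \Delta_{i+1}\theta$; the constant $2$ and the factor $\Delta_i^{i+2}\theta$ in the statement should emerge from tracking these explicitly, exactly as the $\tfrac{2}{\sqrt 6\,m}$ constant appeared in the previous lemma.

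The main obstacle I anticipate is the bookkeeping when $\Delta_i\theta \ne \Delta_{i+1}\theta$: the equi-lateral condition \pref{equilateral} constrains $\|\Delta_i\vecf\|_{\mathbb{R}^n}$, not $\Delta_i\theta$, so the two parameter intervals need not have equal length, and the change of variables above is only an approximate alignment. I would handle this either by splitting $\Delta_i\vecf - \Delta_{i+1}\vecf$ into a ``shift'' term (treated as above) plus a ``length-mismatch'' term $\bigl(1 - \tfrac{\Delta_{i+1}\theta}{\Delta_i\theta}\bigr)\Delta_{i+1}\vecf$ and controlling the latter using \pref{estimete_Delta_theta} together with the bi-Lipschitz bound \pref{bi-Lipschitz}, or more cleanly by parametrizing both arcs over a common reference interval $[0,1]$ and comparing $\dot{\vecf}$ at corresponding points, absorbing the Jacobian discrepancy into the $H^{1/2}$ estimate. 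Everything else is routine Cauchy--Schwarz and elementary integral evaluation, so the content of the lemma lies in setting up this alignment correctly and verifying the stated constant.
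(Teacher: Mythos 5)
Your plan differs substantially from the paper's, and the obstacle you flag — that \pref{equilateral} equalizes chord lengths, not parameter-interval lengths — is exactly where the proposal has a genuine gap. Your ``shift + length-mismatch'' decomposition leaves you with a term of the form $\bigl|1-\tfrac{\Delta_{i+1}\theta}{\Delta_i\theta}\bigr|\,\|\Delta_{i+1}\vecf\|_{\mathbb{R}^n}$, which is of size $\bigl|1-\tfrac{\Delta_{i+1}\theta}{\Delta_i\theta}\bigr|\cdot \tfrac{L_m}{m}$. The right-hand side of the lemma is of size $\tfrac{L}{m}[\dot{\vecf}]_{H^{1/2}}$, so to close the argument you would need $\bigl|1-\tfrac{\Delta_{i+1}\theta}{\Delta_i\theta}\bigr|\lesssim [\dot{\vecf}]_{H^{1/2}([\theta_i,\theta_{i+2}]^2)}$. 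But the estimates \pref{estimete_Delta_theta} and \pref{bi-Lipschitz} that you invoke only give a uniform two-sided bound $\tfrac{L_2}{2L_1}\lesssim \tfrac{\Delta_{i+1}\theta}{\Delta_i\theta}\lesssim \tfrac{2L_1}{L_2}$; they say nothing about the ratio being close to $1$. Deriving such a closeness estimate from the equi-lateral condition essentially requires redoing the whole argument, so this is not a minor bookkeeping point — it is the content of the lemma. The alternative ``parametrize over $[0,1]$'' route has the same problem, with the additional complication that the $H^{1/2}$-seminorm is not obviously controlled under the two different reparametrizations.

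The paper avoids the parameter mismatch entirely by using \pref{equilateral} at the very first step to factor out $\tfrac{L_m}{m}$ and reduce to comparing the two unit chord directions $\tfrac{\Delta_i\vecf}{\|\Delta_i\vecf\|_{\mathbb{R}^n}}$ and $\tfrac{\Delta_{i+1}\vecf}{\|\Delta_{i+1}\vecf\|_{\mathbb{R}^n}}$ to a common tangent $\vectau(\theta_\ast)$, $\theta_\ast\in[\theta_i,\theta_{i+2}]$. The key algebraic identity splits $\tfrac{\Delta_i\vecf}{\|\Delta_i\vecf\|_{\mathbb{R}^n}}-\vectau(\theta_\ast)$ into two pieces, each bounded by $\tfrac{m}{L_m}\int_{\theta_i}^{\theta_{i+1}}\|\dot{\vecf}(\theta)-\dot{\vecf}(\theta_\ast)\|_{\mathbb{R}^n}\,d\theta$, and then one averages over $\theta_\ast$ and applies Cauchy--Schwarz exactly once. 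The lengths $\Delta_i\theta$ and $\Delta_{i+1}\theta$ never need to be compared to each other. I would recommend adopting that normalization-to-unit-vectors strategy: it is both shorter and sidesteps the mismatch problem rather than trying to estimate it.
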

\begin{proof}
Using \pref{equilateral},
we have
\begin{align*}
	&
	\| \Delta_i \vecf - \Delta_{ i+1 } \vecf \|_{ \mathbb{R}^n }
	\\
	& \quad
	= \
	\frac { L_m } m
	\left\|
	\frac { \Delta_i \vecf } { \| \Delta_i \vecf \|_{ \mathbb{R}^n } }
	-
	\frac { \Delta_{ i+1 } \vecf } { \| \Delta_{ i+1 } \vecf \|_{ \mathbb{R}^n } }
	\right\|_{ \mathbb{R}^n }
	\\
	& \quad
	\leqq
	\frac { L_m } m
	\left\{
	\left\|
	\frac { \Delta_i \vecf } { \| \Delta_i \vecf \|_{ \mathbb{R}^n } }
	-
	\vectau ( \theta_\ast )
	\right\|_{ \mathbb{R}^n }
	+
	\left\|
	\vectau ( \theta_\ast )
	-
	\frac { \Delta_{ i+1 } \vecf } { \| \Delta_{ i+1 } \vecf \|_{ \mathbb{R}^n } }
	\right\|_{ \mathbb{R}^n }
	\right\}
	.
\end{align*}
Here we choose $ \theta_\ast $ from the interval $ [ \theta_i , \theta_{ i+2 } ] $.
We integrate this with respect to $ \theta_\ast $,
and divide the resulte by $ \Delta_i^{ i+2 } \theta $ to get
\begin{align*}
	&
	\| \Delta_i \vecf - \Delta_{ i+1 } \vecf \|_{ \mathbb{R}^n }
	\\
	& \quad
	\leqq
	\frac { L_m } m
	\frac 1 { \Delta_i^{ i+2 } \theta }
	\int_{ \theta_i }^{ \theta_{ i+2 } }
	\left(
	\left\|
	\frac { \Delta_i \vecf } { \| \Delta_i \vecf \|_{ \mathbb{R}^n } }
	-
	\vectau ( \theta_\ast )
	\right\|_{ \mathbb{R}^n }
	+
	\left\|
	\vectau ( \theta_\ast )
	-
	\frac { \Delta_{ i+1 } \vecf } { \| \Delta_{ i+1 } \vecf \|_{ \mathbb{R}^n } }
	\right\|_{ \mathbb{R}^n }
	\right)
	d \theta_\ast
	.
\end{align*}
It is clear that
\begin{align*}
	&
	\frac { \Delta_i \vecf } { \| \Delta_i \vecf \|_{ \mathbb{R}^n } }
	-
	\vectau ( \theta_\ast )
	\\
	& \quad
	=
	\frac 1  { \| \Delta_i \vecf \|_{ \mathbb{R}^n } }
	\int_{ \theta_i }^{ \theta_{ i+1 } }
	\left( \dot { \vecf } ( \theta ) - \dot { \vecf } ( \theta_\ast ) \right) d \theta
	+
	\frac { \Delta_i \theta \dot { \vecf } ( \theta_\ast ) }
	{ \| \Delta_i \vecf \|_{ \mathbb{R} } \| \dot { \vecf } ( \theta_\ast ) \|_{ \mathbb{R}^n } }
	\left( 
	\| \dot { \vecf } ( \theta_\ast ) \|_{ \mathbb{R}^n }
	- \frac { \| \Delta_i \vecf \|_{ \mathbb{R}^n } } { \Delta_i \theta }
	\right)
	.
\end{align*}
We use \pref{equilateral} again,
and get
\[
	\left\|
	\frac 1  { \| \Delta_i \vecf \|_{ \mathbb{R}^n } }
	\int_{ \theta_i }^{ \theta_{ i+1 } }
	\left( \dot { \vecf } ( \theta ) - \dot { \vecf } ( \theta_\ast ) \right) d \theta
	\right\|_{ \mathbb{R}^n }
	\leqq
	\frac m { L_m }
	\int_{ \theta_i }^{ \theta_{ i+1 } }
	\left\| \dot { \vecf } ( \theta ) - \dot { \vecf } ( \theta_\ast ) \right\|_{ \mathbb{R}^n }
	d \theta
	.
\]
On the other hand,
it holds that
\begin{align*}
	&
	\left\|
	\frac { \Delta_i \theta \dot { \vecf } ( \theta_\ast ) }
	{ \| \Delta_i \vecf \|_{ \mathbb{R} } \| \dot { \vecf } ( \theta_\ast ) \|_{ \mathbb{R}^n } }
	\left(
	\| \dot { \vecf } ( \theta_\ast ) \|_{ \mathbb{R}^n }
	- \frac { \| \Delta_i \vecf \|_{ \mathbb{R}^n } } { \Delta_i \theta }
	\right)
	\right\|_{ \mathbb{R}^n }
	\\
	& \quad
	\leqq
	\frac m { L_m }
	\left\|
	\Delta \theta_i \dot { \vecf } ( \theta_\ast )
	- \Delta_i \vecf
	\right\|_{ \mathbb{R}^n }
	\\
	& \quad
	\leqq
	\frac m { L_m }
	\int_{ \theta_i }^{ \theta_{ i+1 } }
	\left\| \dot { \vecf } ( \theta ) - \dot { \vecf } ( \theta_\ast ) \right\|_{ \mathbb{R}^n }
	d \theta
	.
\end{align*}
Consequently we obtain
\begin{align*}
	&
	\frac { L_m } m
	\frac 1 { \Delta_i^{ i+2 } \theta }
	\int_{ \theta_i }^{ \theta_{ i+2 } }
	\left(
	\left\|
	\frac { \Delta_i \vecf } { \| \Delta_i \vecf \|_{ \mathbb{R}^n } }
	-
	\vectau ( \theta_\ast )
	\right\|_{ \mathbb{R}^n }
	+
	\left\|
	\vectau ( \theta_\ast )
	-
	\frac { \Delta_{ i+1 } \vecf } { \| \Delta_{ i+1 } \vecf \|_{ \mathbb{R}^n } }
	\right\|_{ \mathbb{R}^n }
	\right)
	d \theta_\dagger
	\\
	& \quad
	\leqq
	\frac 2 { \Delta_i^{ i+2 } \theta }
	\left\{
	\int_{ \theta_i }^{ \theta_{ i+1 } }
	\int_{ \theta_i }^{ \theta_{ i+2 } }
	\left\| \dot { \vecf } ( \theta ) - \dot { \vecf } ( \theta_\ast ) \right\|_{ \mathbb{R}^n }
	d \theta d \theta_\ast
	+
	\int_{ \theta_{ i+1 } }^{ \theta_{ i+2 } }
	\int_{ \theta_i }^{ \theta_{ i+2 } }
	\left\| \dot { \vecf } ( \theta ) - \dot { \vecf } ( \theta_\ast ) \right\|_{ \mathbb{R}^n }
	d \theta d \theta_\ast
	\right\}
	\\
	& \quad
	\leqq
	2
	\int_{ \theta_i }^{ \theta_{ i+2 } }
	\int_{ \theta_i }^{ \theta_{ i+2 } }
	\frac {
	\left\| \dot { \vecf } ( \theta ) - \dot { \vecf } ( \theta_\ast ) \right\|_{ \mathbb{R}^n }
	}
	{ | \theta - \theta_\ast | }
	d \theta d \theta_\ast
	\\
	& \quad
	\leqq
	2
	\left(
	\int_{ \theta_i }^{ \theta_{ i+2 } }
	\int_{ \theta_i }^{ \theta_{ i+2 } }
	d \theta d \theta_\ast
	\right)^{ \frac 12 }
	\left(
	\int_{ \theta_i }^{ \theta_{ i+2 } }
	\int_{ \theta_i }^{ \theta_{ i+2 } }
	\frac {
	\left\| \dot { \vecf } ( \theta ) - \dot { \vecf } ( \theta_\ast ) \right\|_{ \mathbb{R}^n }^2
	}
	{ | \theta - \theta_\ast |^2 }
	d \theta d \theta_\ast
	\right)^{ \frac 12 }
	\\
	& \quad
	\leqq
	2 \Delta_i^{ i+2 } \theta [ \dot { \vecf } ]_{ H^{ \frac 12 } ( [ \theta_i , \theta_{ i+2 } ] ) }
	.
\end{align*}
\qed
\end{proof}
\begin{defi}
For $ u \in H^{ \frac 12 } ( \mathbb{R} / \mathbb{Z} ) $ and positive constants $ \epsilon_1 $,
$ \epsilon_2 $,
we define $ K ( \vecu , \epsilon_1 , \epsilon_2 ) $ by
\[
	K ( \vecu , \epsilon_1 , \epsilon_2 )
	=
	\sup \left\{
	\left.
	[ \vecu ]_{ H^{ \frac 12 } ( I \times J ) } \, \right| \,
	I \subset \mathbb{R} / \mathbb{Z},
	\,
	J \subset \mathbb{R} / \mathbb{Z},
	\,
	| I | \leqq \epsilon_1 , \, | J | \leqq \epsilon_2
	\right\}
	.
\]
When $ \epsilon_1 = \epsilon_2 $,
we will write the quantity simply by $ K( \vecu , \epsilon_1 ) $.
\end{defi}
\begin{cor}
Let $ \vecf \in H^{ \frac 32 } ( \mathbb{R} / \mathbb{Z} ) $ satisfy {\rm \pref{bi-Lipschitz}} and {\rm \pref{equilateral}}.
Then we have
\begin{align*}
	\| \Delta_i \vecf - \Delta_j \vecf \|_{ \mathbb{R}^n }
	\leqq & \
	\frac { CL | i - j | } m K \left( \dot { \vecf } , \frac { 2L } { L_2 m } \right)
	,
	\\
	\left\|
	\frac { \Delta_i \vecf } { \| \Delta_i \vecf \|_{ \mathbb{R}^n } }
	-
	\frac { \Delta_j \vecf } { \| \Delta_j \vecf \|_{ \mathbb{R}^n } }
	\right\|_{ \mathbb{R}^n }
	\leqq & \
	C | i - j | K \left( \dot { \vecf } , \frac { 2L } { L_2 m } \right) .
\end{align*}
\label{cor1}
\end{cor}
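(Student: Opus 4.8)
The plan is to deduce both estimates from Lemma~\ref{estimate_Delta(i+1)-Deltai} by telescoping along the edges of the polygon. Assume without loss of generality $ i < j $, and write
\[
	\Delta_i \vecf - \Delta_j \vecf
	=
	\sum_{ k = i }^{ j - 1 }
	\left( \Delta_k \vecf - \Delta_{ k+1 } \vecf \right) .
\]
By the triangle inequality followed by Lemma~\ref{estimate_Delta(i+1)-Deltai},
\[
	\| \Delta_i \vecf - \Delta_j \vecf \|_{ \mathbb{R}^n }
	\leqq
	\sum_{ k = i }^{ j - 1 }
	\| \Delta_k \vecf - \Delta_{ k+1 } \vecf \|_{ \mathbb{R}^n }
	\leqq
	2 \sum_{ k = i }^{ j - 1 }
	\Delta_k^{ k+2 } \theta \,
	[ \dot { \vecf } ]_{ H^{ \frac 12 } ( [ \theta_k , \theta_{ k+2 } ]^2 ) } .
\]
Next I use the uniform edge control \pref{estimete_Delta_theta}: since $ \Delta_k^{ k+2 } \theta = \Delta_k \theta + \Delta_{ k+1 } \theta \leqq \frac { 2L } { L_2 m } $, each window $ [ \theta_k , \theta_{ k+2 } ] $ has length at most $ \frac { 2L } { L_2 m } $, so by the definition of $ K $ and its monotonicity in the window size we get $ [ \dot { \vecf } ]_{ H^{ \frac 12 } ( [ \theta_k , \theta_{ k+2 } ]^2 ) } \leqq K ( \dot { \vecf } , \frac { 2L } { L_2 m } ) $ for every $ k $. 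Since the sum has exactly $ | i - j | $ terms, this yields $ \| \Delta_i \vecf - \Delta_j \vecf \|_{ \mathbb{R}^n } \leqq \frac { 4 L | i - j | } { L_2 m } K ( \dot { \vecf } , \frac { 2L } { L_2 m } ) $, which is the first estimate with $ C = 4 / L_2 $.

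For the second estimate I would invoke the equi-lateral condition \pref{equilateral}. Since $ \| \Delta_i \vecf \|_{ \mathbb{R}^n } = \| \Delta_j \vecf \|_{ \mathbb{R}^n } = L_m / m $, we have
\[
	\frac { \Delta_i \vecf } { \| \Delta_i \vecf \|_{ \mathbb{R}^n } }
	-
	\frac { \Delta_j \vecf } { \| \Delta_j \vecf \|_{ \mathbb{R}^n } }
	=
	\frac m { L_m } \left( \Delta_i \vecf - \Delta_j \vecf \right) ,
\]
so, taking norms and inserting the first estimate together with the lower bound \pref{lower_bound_of_Lm} (valid for large $ m $), i.e. $ m / L_m \leqq 2 m / L $,
\[
	\left\|
	\frac { \Delta_i \vecf } { \| \Delta_i \vecf \|_{ \mathbb{R}^n } }
	-
	\frac { \Delta_j \vecf } { \| \Delta_j \vecf \|_{ \mathbb{R}^n } }
	\right\|_{ \mathbb{R}^n }
	=
	\frac m { L_m } \| \Delta_i \vecf - \Delta_j \vecf \|_{ \mathbb{R}^n }
	\leqq
	\frac { 2m } L \cdot \frac { 4 L | i - j | } { L_2 m }
	K \left( \dot { \vecf } , \frac { 2L } { L_2 m } \right)
	=
	\frac { 8 | i - j | } { L_2 }
	K \left( \dot { \vecf } , \frac { 2L } { L_2 m } \right) ,
\]
which is the claim with $ C = 8 / L_2 $; renaming $ \max \{ 4 / L_2 , 8 / L_2 \} $ to a single constant $ C $ depending only on $ L_2 $ gives the statement as written.

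There is no substantial obstacle here: the corollary is essentially bookkeeping built on Lemma~\ref{estimate_Delta(i+1)-Deltai} and the uniform length bounds \pref{estimete_Delta_theta}, \pref{lower_bound_of_Lm}. The only two points deserving a moment's attention are: (i) the indices are to be read cyclically on $ \mathbb{R} / \mathbb{Z} $, so the telescoping path should use the shorter arc between $ \vecf ( \theta_i ) $ and $ \vecf ( \theta_j ) $, which is harmless since the literal count $ | i - j | $ is in any case an admissible upper bound for the number of summands; and (ii) although the windows $ [ \theta_k , \theta_{ k+2 } ] $ overlap, one only needs each individual seminorm to be bounded by $ K ( \dot { \vecf } , \frac { 2L } { L_2 m } ) $, not a bound on their sum, so the overlap causes no loss and the factor $ | i - j | $ arises purely from counting edges.
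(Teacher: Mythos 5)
Your proposal is correct and reproduces the paper's argument essentially verbatim: telescope $\Delta_i\vecf-\Delta_j\vecf$ along the edges, apply Lemma~\ref{estimate_Delta(i+1)-Deltai} termwise, bound each window size and seminorm using \pref{estimete_Delta_theta} and the definition of $K$, and then rescale by $m/L_m$ via the equi-lateral condition \pref{equilateral} and \pref{lower_bound_of_Lm} for the second inequality. The only difference is that you spell out the explicit constants, which the paper absorbs into $C$.
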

\begin{proof}
These estimates are shown as
\begin{align*}
	\| \Delta_i \vecf - \Delta_j \vecf \|_{ \mathbb{R}^n }
	\leqq & \
	\sum_{ k=i }^{ j-1 }
	\| \Delta_k \vecf - \Delta \vecf_{ k+1 } \|_{ \mathbb{R}^n }
	\\
	\leqq & \
	\sum_{ k=1 }^{ j-1 }
	\Delta_k^{ k+2 } \theta
	[ \dot { \vecf } ]_{ H^{ \frac 12 } ( [ \theta_k , \theta_{ k+2 } ]^2 ) }
	\\
	\leqq & \
	\frac { CL | i - j | } m K \left( \dot { \vecf } , \frac { 2L } { L_2 m } \right)
	,
	\\
	\left\|
	\frac { \Delta_i \vecf } { \| \Delta_i \vecf \|_{ \mathbb{R}^n } }
	-
	\frac { \Delta_j \vecf } { \| \Delta_j \vecf \|_{ \mathbb{R}^n } }
	\right\|_{ \mathbb{R}^n }
	= & \
	\frac m { L_m } \| \Delta_i \vecf - \Delta \vecf_j \|_{ \mathbb{R}^n }
	\\
	\leqq & \
	C | i - j | K \left( \dot { \vecf } , \frac { 2L } { L_2 m } \right)
	.
\end{align*}
\qed
\end{proof}
\par
It follows from \pref{bi-Lipschitz} and \pref{equilateral} that
\[
	L_1 \Delta_k \theta \leqq \| \Delta_k \vecf \|_{ \mathbb{R}^n }
	= \frac { L_m } m
	\leqq
	L_2 \Delta_k \theta \leqq \| \Delta_k \vecf \|_{ \mathbb{R}^n }
	.
\]
When $ d_{ \mathbb{R}/\mathbb{Z} } ( \theta_i , \theta_j ) = \theta_j - \theta_i $,
we sum the above estimate with respect to $ k $ from $ i $ to $ j - 1 $.
The result is
\[
	L_1 d_{ \mathbb{R}/\mathbb{Z} } ( \theta_i , \theta_j )
	\leqq
	\frac { j - i } m L_m
	\leqq
	L_2 d_{ \mathbb{R}/\mathbb{Z} } ( \theta_i , \theta_j )
	.
\]
From \pref{bi-Lipschitz} we have
\[
	\frac { L_m L_1 } { L_2 }
	\frac { j - i } m
	\leqq
	L_1 d_{ \mathbb{R}/\mathbb{Z} } ( \theta_i , \theta_j )
	\leqq
	\| \Delta_i^j \vecf \|_{ \mathbb{R}^n }
	\leqq
	L_2 d_{ \mathbb{R}/\mathbb{Z} } ( \theta_i , \theta_j )
	\leqq
	\frac { L_m L_2 } { L_1 }
	\frac { j - i } m
	.
\]
When $ d_{ \mathbb{R}/\mathbb{Z} } ( \theta_i , \theta_j ) = \theta_{ i+m } - \theta_j $,
we have similarly
\[
	L_1 d_{ \mathbb{R}/\mathbb{Z} } ( \theta_i , \theta_j )
	\leqq
	\frac { m + i - j } m L_m
	\leqq
	L_2 d_{ \mathbb{R}/\mathbb{Z} } ( \theta_i , \theta_j )
	,
\]
and
\[
	\frac { L_m L_1 } { L_2 }
	\frac { m + i - j } m
	\leqq
	L_1 d_{ \mathbb{R}/\mathbb{Z} } ( \theta_i , \theta_j )
	\leqq
	\| \Delta_i^j \vecf \|_{ \mathbb{R}^n }
	\leqq
	L_2 d_{ \mathbb{R}/\mathbb{Z} } ( \theta_i , \theta_j )
	\leqq
	\frac { L_m L_2 } { L_1 }
	\frac { m + i - j } m
	.
\]
Hence we find
\[
	\frac { L L_1 } { 2 L_2 }
	\frac { \min\{ | i - j | , m - | i - j | \} } m
	\leqq
	\| \Delta_i^j \vecf \|_{ \mathbb{R}^n }
	\leqq
	\frac { L L_2 } { L_1 }
	\frac { \max \{ | i - j | , m - | i - j | \} } m
\]
for sufficnetly large $ m $.
We consider the summation with respect to $ i $ and $ j $
\[
	\sum_{ i \ne j } ( \cdots )
	=
	\sum_{ i=1 }^m \left( \sum_{ j = i - [ \frac m2 ] }^{ i - 1 } + \sum_{ j = i + 1 }^{ i + [ \frac m2 ] } \right) ( \cdots )
\]
when $ m $ is odd;
and
\[
	\sum_{ i \ne j } ( \cdots )
	=
	\sum_{ i=1 }^m \left( \sum_{ j = i - [ \frac m2 ] +1 }^{ i - 1 } + \sum_{ j = i + 1 }^{ i + [ \frac m2 ] } \right) ( \cdots )
\]
when $ m $ is even.
Thus we may assume $ | i - j | \leqq [ \frac m2 ] $.
\begin{cor}
There exist constants $ C $ depending on
$ L_1 $ and 
$ L_2 $ such that
\begin{align*}
	\left| A_{ij} \left(
	\| \Delta_{ i+1 }^{ j+1 } \vecf \|_{ \mathbb{R}^n }
	-
	\| \Delta_i^j \vecf \|_{ \mathbb{R}^n }
	\right) \right|
	\leqq & \
	A_{ij}
	\| \Delta_{ i+1 }^{ j+1 } \vecf - \Delta_i^j \vecf \|_{ \mathbb{R}^n }
	\\
	\leqq & \
	\frac { C L | i - j | } m
	K \left( \dot { \vecf } , \frac { 2 L } { L_2 m } \right)
	.
\end{align*}
\label{cor2}
\end{cor}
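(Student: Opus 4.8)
The plan is to derive both inequalities as elementary consequences of Corollary \ref{cor1} together with two standard facts about the averaging operator $A_{ij}$. Recall that $A_{ij}(u)$ is the arithmetic mean of the four values $u_{ij}$, $u_{(i+1)j}$, $u_{i(j+1)}$, $u_{(i+1)(j+1)}$; in particular $\left| A_{ij}(u) \right| \leqq A_{ij}( |u| )$ for any real array $u$ by the triangle inequality, and $A_{ij}$ is monotone since its coefficients are positive and sum to $1$. For the left-hand inequality I would apply the first of these to the array $u_{kl} = \| \Delta_{k+1}^{l+1} \vecf \|_{ \mathbb{R}^n } - \| \Delta_k^l \vecf \|_{ \mathbb{R}^n }$ and then the reverse triangle inequality $\bigl| \, \| \vecx \|_{ \mathbb{R}^n } - \| \vecy \|_{ \mathbb{R}^n } \, \bigr| \leqq \| \vecx - \vecy \|_{ \mathbb{R}^n }$ to each of the four entries; monotonicity of $A_{ij}$ then yields exactly the middle term.

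For the right-hand inequality the key observation is the telescoping identity
\[
	\Delta_{ i+1 }^{ j+1 } \vecf - \Delta_i^j \vecf
	=
	\bigl( \vecf ( \theta_{ j+1 } ) - \vecf ( \theta_j ) \bigr)
	-
	\bigl( \vecf ( \theta_{ i+1 } ) - \vecf ( \theta_i ) \bigr)
	=
	\Delta_j \vecf - \Delta_i \vecf ,
\]
which rewrites the difference of two long chords as a difference of consecutive edge vectors. Consequently $A_{ij} \| \Delta_{ i+1 }^{ j+1 } \vecf - \Delta_i^j \vecf \|_{ \mathbb{R}^n }$ is the arithmetic mean of the four quantities $\| \Delta_p \vecf - \Delta_q \vecf \|_{ \mathbb{R}^n }$ with $(p,q) \in \{ i, i+1 \} \times \{ j, j+1 \}$, and each such pair satisfies $| p - q | \leqq | i - j | + 1$.

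It then remains to feed these into Corollary \ref{cor1}: each term $\| \Delta_p \vecf - \Delta_q \vecf \|_{ \mathbb{R}^n }$ is bounded by $\frac { C L | p - q | } m K \bigl( \dot { \vecf } , \frac { 2 L } { L_2 m } \bigr)$, the argument $\frac { 2L } { L_2 m }$ of $K$ being admissible because $\Delta_k^{ k+2 } \theta \leqq \frac { 2L } { L_2 m }$ by \pref{estimete_Delta_theta}. Since the summation is carried out in the range $1 \leqq | i - j | \leqq [ \frac m2 ]$, we have $| p - q | \leqq | i - j | + 1 \leqq 2 | i - j |$, so each of the four terms — hence their mean — is at most $\frac { 2 C L | i - j | } m K \bigl( \dot { \vecf } , \frac { 2 L } { L_2 m } \bigr)$, and absorbing the factor $2$ into the constant completes the argument. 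I do not expect a genuine obstacle here: the substantive estimate has already been established in Lemma \ref{estimate_Delta(i+1)-Deltai} and Corollary \ref{cor1}, and this statement is the routine bookkeeping step that packages those bounds for use in the density decomposition; the only point to watch is that the unit index shifts do not spoil the interval-length constraint on $K$, which is precisely why the hypothesis $| i - j | \geqq 1$ enters.
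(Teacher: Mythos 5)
Your proof is correct and follows essentially the same route as the paper's: both reduce the question via the identity $\Delta_{i+1}^{j+1}\vecf - \Delta_i^j\vecf = \Delta_j\vecf - \Delta_i\vecf$ to bounds on $\|\Delta_p\vecf - \Delta_q\vecf\|_{\mathbb{R}^n}$ from Corollary \ref{cor1}, applied to the four index pairs that enter the arithmetic mean $A_{ij}$. The only cosmetic difference is that you uniformly bound $|p-q|\leqq 2|i-j|$ and absorb the factor into $C$, whereas the paper keeps the exact shifts $|i-j|$, $|i-j\pm 1|$ and averages them; both give the claimed bound.
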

\begin{proof}
From Corollary \ref{cor1} we have
\begin{align*}
	\left|
	\| \Delta_{ i+1 }^{ j+1 } \vecf \|_{ \mathbb{R}^n }
	-
	\| \Delta_i^j \vecf \|_{ \mathbb{R}^n }
	\right|
	\leqq & \
	\| \Delta_{ i+1 }^{ j+1 } \vecf - \Delta_i^j \vecf \|_{ \mathbb{R}^n }
	=
	\| \Delta_j \vecf - \Delta_i \vecf \|_{ \mathbb{R}^n }
	\\
	\leqq & \
	\frac { CL | i - j |} m
	K \left( \dot { \vecf } , \frac { 2 L } { L_2 m } \right)
	.
\end{align*}
Similarly we obtain
\begin{align*}
	\left|
	\| \Delta_{ i+1 }^{ j+2 } \vecf \|_{ \mathbb{R}^n }
	-
	\| \Delta_i^{ j+1 } \vecf \|_{ \mathbb{R}^n }
	\right|
	\leqq & \
	\frac { CL | i - j - 1 |} m
	K \left( \dot { \vecf } , \frac { 2 L } { L_2 m } \right)
	,
	\\
	\left|
	\| \Delta_{ i+2 }^{ j+1 } \vecf \|_{ \mathbb{R}^n }
	-
	\| \Delta_{ i+1 }^j \vecf \|_{ \mathbb{R}^n }
	\right|
	\leqq & \
	\frac { CL | i - j + 1 |} m
	K \left( \dot { \vecf } , \frac { 2 L } { L_2 m } \right)
	,
	\\
	\left|
	\| \Delta_{ i+2 }^{ j+2 } \vecf \|_{ \mathbb{R}^n }
	-
	\| \Delta_{ i+1 }^{ j +1 } \vecf \|_{ \mathbb{R}^n }
	\right|
	\leqq & \
	\frac { CL | i - j |} m
	K \left( \dot { \vecf } , \frac { 2 L } { L_2 m } \right)
	.
\end{align*}
The assertion follows by calculating the arithmetic mean of these.
\qed
\end{proof}
\begin{lem}
If $ m $ is sufficiently larga,
and if $ | i - j | \leqq [ \frac m2 ] $,
then
\begin{align*}
	A_i \| \Delta_i^j \vecf \|_{ \mathbb{R}^n } \geqq & \
	\frac { L_m L_1 } { L_2 }
	\frac { | i - j | - \frac 12} m
	,
	\\
	A_i \| \Delta_i^{ j+1 } \vecf \|_{ \mathbb{R}^n } \geqq & \
	\frac { L_m L_1 } { L_2 }
	\frac { | i - j | - \frac 12} m
	,
	\\
	A_{ij} \| \Delta_i^j \vecf \|_{ \mathbb{R}^n } \geqq & \
	\frac { L_m L_1 } { L_2 }
	\frac { | i - j | - \frac 14} m
	,
	\\
	A_{ij} \| \Delta_i^{ j+1 } \vecf \|_{ \mathbb{R}^n } \geqq & \
	\frac { L_m L_1 } { 2 L_2 }
	\frac { | i - j | } m
	.
\end{align*}
\label{lem3}
\end{lem}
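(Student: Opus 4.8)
The two facts to feed in are already at hand: the equilateral identity \pref{equilateral}, which gives $\| \Delta_k \vecf \|_{ \mathbb{R}^n } = L_m/m$ for every consecutive difference, and the string of bi-Lipschitz estimates established just above, which after the reduction to $| i - j | \leqq [ \tfrac m2 ]$ takes the form $\| \Delta_i^j \vecf \|_{ \mathbb{R}^n } \geqq \frac { L_m L_1 } { L_2 m } \delta ( i , j )$, where $\delta ( i , j ) = \min \{ | i - j | , \, m - | i - j | \}$ is the modular index distance. The plan is to expand each of the arithmetic means $A_i$, $A_{ij}$ occurring on the left as an honest average of the norms $\| \Delta_{ i' }^{ j' } \vecf \|_{ \mathbb{R}^n }$ it is built from, bound every summand from below by one of these two facts, and add up; the consecutive-difference summands are handled on the same footing via $\| \Delta_k \vecf \|_{ \mathbb{R}^n } = L_m/m \geqq \tfrac{L_1}{L_2}\cdot\tfrac{L_m}m$.

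For the first two inequalities this is immediate. The quantity $A_i \| \Delta_i^j \vecf \|_{ \mathbb{R}^n }$ is the mean of $\| \Delta_i^j \vecf \|_{ \mathbb{R}^n }$ and $\| \Delta_{ i+1 }^j \vecf \|_{ \mathbb{R}^n }$; since a shift of one index changes the modular distance by at most one, a short check of the possible positions of the shifted pair relative to the antidiagonal gives $\delta ( i+1 , j ) \geqq | i - j | - 1$ in every case (including $|i-j|=[\tfrac m2]$), so substituting the bi-Lipschitz bound into each of the two terms yields exactly $\frac { L_m L_1 } { L_2 } \cdot \frac { | i - j | - \frac 12 } m$. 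The second inequality is the same computation, using $\delta ( i+1 , j+1 ) = | i - j |$ and $\delta ( i , j+1 ) \geqq | i - j | - 1$. For $A_{ij} \| \Delta_i^j \vecf \|_{ \mathbb{R}^n }$ one averages the four norms attached to $( i , j )$, $( i+1 , j )$, $( i , j+1 )$, $( i+1 , j+1 )$: two of the four pairs have $\delta = | i - j |$, while the remaining two have raw index gaps $| i - j | - 1$ and $| i - j | + 1$, so away from the antidiagonal the four distances sum to $4 | i - j |$ and one even obtains $\geqq \frac { L_m L_1 } { L_2 m } | i - j |$. The fourth inequality is the laxest of all because of the extra factor $\tfrac 12$: the same four-term expansion, retaining only that at least two of the summands survive with modular distance $\geqq | i - j | - 2$, leaves a wide margin once $m$ is large.

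The one place that needs genuine care is the boundary regime $| i - j | = [ \tfrac m2 ]$, where the index gap $| i - j | + 1$ wraps around to $m - | i - j | - 1$; this equals $|i-j|$ when $m$ is odd but only $| i - j | - 1$ when $m$ is even, so the generic count for the $A_{ij}$ sum degrades, and tracking this is precisely what produces the $-\tfrac14$ (and the weakened constants in the last estimate) recorded in the statement. It is dispatched by a short case split on the parity of $m$ — noting in particular that for $m$ even only one such pair occurs per $i$ — together with \pref{lower_bound_of_Lm}, which keeps $L_m \geqq \tfrac L2$ and thus makes all the lower bounds uniform in $m$; in the applications only the fact that these means are bounded below by a positive constant matters, so even the cruder boundary count suffices. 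I expect this bookkeeping near $[ \tfrac m2 ]$ to be the only real obstacle; everywhere else the proof is a direct substitution of \pref{bi-Lipschitz} and \pref{equilateral} into the definitions of $A_i$ and $A_{ij}$.
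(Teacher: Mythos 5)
Your approach is the same as the paper's: expand each mean $A_i$, $A_{ij}$ as an honest average of the chord lengths $\|\Delta_{i'}^{j'}\vecf\|_{\mathbb{R}^n}$ it is built from, feed the bi-Lipschitz/equilateral lower bound $\|\Delta_{i'}^{j'}\vecf\|_{\mathbb{R}^n}\geqq\frac{L_mL_1}{L_2m}\min\{|i'-j'|,\,m-|i'-j'|\}$ into each summand, and close by tracking the wrap-around at $|i-j|=[\frac m2]$. The paper organizes the identical count into the three explicit subcases $i<j<i+[\frac m2]-1$, $j=i+[\frac m2]-1$ and $j=i+[\frac m2]$ and just averages the resulting index factors, which is the computation you outline.

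One caution when you actually fill in the boundary bookkeeping: you already note that for $|i-j|=[\frac m2]$ and $m$ even the wrapped gap $m-|i-j|-1$ drops to $|i-j|-1$. In that subcase the four summands of $A_{ij}\|\Delta_i^j\vecf\|_{\mathbb{R}^n}$ carry modular index factors $|i-j|$, $|i-j|-1$, $|i-j|-1$, $|i-j|$ (for $(i,j)$, $(i+1,j)$, $(i,j+1)$, $(i+1,j+1)$ respectively), so the mean is $|i-j|-\tfrac12$, not $|i-j|-\tfrac14$; the $-\tfrac14$ in the statement is reached only for $m$ odd, where $\delta(i,j+1)=|i-j|$. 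The paper's proof of this subcase writes $\|\Delta_{i+1}^j\vecf\|_{\mathbb{R}^n}\geqq\frac{L_mL_1}{L_2}\frac{j-i}m$, which should read $\frac{j-i-1}m$, and this is what produces its $-\tfrac14$. So if you reproduce your plan faithfully you will land on $|i-j|-\tfrac12$ for $m$ even and should not be troubled by the mismatch with the stated constant: every downstream use of the lemma only needs $A_{ij}\|\Delta_i^j\vecf\|_{\mathbb{R}^n}\geqq c\,\frac{L_mL_1}{L_2}\frac{|i-j|}m$ with some fixed $c>0$, which both $-\tfrac14$ and $-\tfrac12$ give since $|i-j|\geqq 1$.
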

\begin{proof}
When $ i < j < i + [ \frac m2 ] - 1 $,
we have
\begin{align*}
	0 < j - i < \left[ \frac m2 \right] - 1 ,
	& \quad
	0 < j + 1 - i \leqq \left[ \frac m2 \right] - 1 ,
	\\
	0 \leqq j - i - 1 < \left[ \frac m2 \right] - 1 ,
	& \quad
	0 < ( j + 1 ) - ( i + 1) < \left[ \frac m2 \right] -1 
	\\
	0 < ( j + 2 ) - i \leqq \left[ \frac m2 \right] ,
	& \quad
	0 < ( j + 2 ) - ( i + 1 ) \leqq \left[ \frac m2 \right] - 1
	.
\end{align*}
Hence
\begin{align*}
	&
	\| \Delta_i^j \vecf \|_{ \mathbb{R}^n } \geqq
	\frac { L_m L_1 } { L_2 }
	\frac { j - i } m
	,
	\quad
	\| \Delta_i^{ j+1}  \vecf \|_{ \mathbb{R}^n } \geqq
	\frac { L_m L_1 } { L_2 }
	\frac { j + 1 - i } m
	,
	\\
	&
	\| \Delta_{ i+1 }^j \vecf \|_{ \mathbb{R}^n } \geqq
	\frac { L_m L_1 } { L_2 }
	\frac { j - i - 1 } m
	,
	\quad
	\| \Delta_{ i+1 }^{ j+1 } \vecf \|_{ \mathbb{R}^n } \geqq
	\frac { L_m L_1 } { L_2 }
	\frac { j - i } m
	\\
	&
	\| \Delta_i^{ j+2 } \vecf \|_{ \mathbb{R}^n } \geqq
	\frac { L_m L_1 } { L_2 }
	\frac { j + 2 - i } m
	,
	\quad
	\| \Delta_{ i+1 }^{ j+2 } \vecf \|_{ \mathbb{R}^n } \geqq
	\frac { L_m L_1 } { L_2 }
	\frac { j + 1 - i } m
\end{align*}
hold.
Calculating the arithmetic mean,
we obtain
\begin{align*}
	A_i \| \Delta_i^j \vecf \|_{ \mathbb{R}^n } \geqq & \
	\frac { L_m L_1 } { L_2 }
	\frac { j - i - \frac 12 } m
	,
	\\
	A_i \| \Delta_i^{ j+1 } \vecf \|_{ \mathbb{R}^n } \geqq & \
	\frac { L_m L_1 } { L_2 }
	\frac { j - i + \frac 12 } m
	,
	\\
	A_{ij} \| \Delta_i^j \vecf \|_{ \mathbb{R}^n } \geqq & \
	\frac { L_m L_1 } { L_2 }
	\frac { j - i } m
	,
	\\
	A_{ij} \| \Delta_i^{ j+1 } \vecf \|_{ \mathbb{R}^n } \geqq & \
	\frac { L_m L_1 } { L_2 }
	\frac { j + 1 - i } m
	.
\end{align*}
\par
In case of $ j = i + [ \frac m2 ] - 1 $,
it follows from
\begin{align*}
	j - i = \left[ \frac m2 \right] -1 ,
	& \quad
	j + 1 - i = \left[ \frac m2 \right] ,
	\\
	j -  i - 1 = \left[ \frac m2 \right] - 2 ,
	& \quad
	( j + 1 ) - ( i + 1) = \left[ \frac m2 \right] - 1,
	\\
	( j + 2 ) - i = \left[ \frac m2 \right] + 1 ,
	& \quad
	( j + 2 ) - ( i + 1 ) = \left[ \frac m2 \right]
\end{align*}
that
\begin{align*}
	&
	\| \Delta_i^j \vecf \|_{ \mathbb{R}^n } \geqq
	\frac { L_m L_1 } { L_2 }
	\frac { j - i } m
	,
	\quad
	\| \Delta_i^{ j+1}  \vecf \|_{ \mathbb{R}^n } \geqq
	\frac { L_m L_1 } { L_2 }
	\frac { j + 1 - i } m
	,
	\\
	&
	\| \Delta_{ i+1 }^j \vecf \|_{ \mathbb{R}^n } \geqq
	\frac { L_m L_1 } { L_2 }
	\frac { j - i - 1 } m
	,
	\quad
	\| \Delta_{ i+1 }^{ j+1 } \vecf \|_{ \mathbb{R}^n } \geqq
	\frac { L_m L_1 } { L_2 }
	\frac { j - i } m
	,
	\\
	&
	\| \Delta_i^{ j+2 } \vecf \|_{ \mathbb{R}^n } \geqq
	\frac { L_m L_1 } { L_2 }
	\frac { m + i  - j - 2 } m
	\geqq
	\frac { L_m L_1 } { L_2 }
	\frac { [ \frac m2 ] - 1 } m
	=
	\frac { L_m L_1 } { L_2 }
	\frac { j - i } m
	,
	\\
	&
	\| \Delta_{ i+1 }^{ j+2 } \vecf \|_{ \mathbb{R}^n } \geqq
	\frac { L_m L_1 } { L_2 }
	\frac { j + 1 - i } m
	.
\end{align*}
Therefore we have
\begin{align*}
	A_i \| \Delta_i^j \vecf \|_{ \mathbb{R}^n } \geqq & \
	\frac { L_m L_1 } { L_2 }
	\frac { j - i - \frac 12 } m
	,
	\\
	A_i \| \Delta_i^{ j+1 } \vecf \|_{ \mathbb{R}^n } \geqq & \
	\frac { L_m L_1 } { L_2 }
	\frac { j - i + \frac 12 } m
	,
	\\
	A_{ij} \| \Delta_i^j \vecf \|_{ \mathbb{R}^n } \geqq & \
	\frac { L_m L_1 } { L_2 }
	\frac { j - i } m
	,
	\\
	A_{ij} \| \Delta_i^j \vecf \|_{ \mathbb{R}^n } \geqq & \
	\frac { L_m L_1 } { L_2 }
	\frac { j - i + \frac 12 } m
	.
\end{align*}
\par
When $ j = i + [ \frac m2 ] $,
we have
\begin{align*}
	j - i = \left[ \frac m2 \right] ,
	& \quad
	j +1 - i = \left[ \frac m2 \right] + 1 ,
	\\
	j - i - 1 = \left[ \frac m2 \right] - 1 ,
	& \quad
	( j + 1 ) - ( i + 1) = \left[ \frac m2 \right]
	\\
	( j + 2 ) - i = \left[ \frac m2 \right] + 2 ,
	& \quad
	( j + 2 ) - ( i + 1 ) = \left[ \frac m2 \right] + 1
	,
\end{align*}
which yield
\begin{align*}
	&
	\| \Delta_i^j \vecf \|_{ \mathbb{R}^n } \geqq
	\frac { L_m L_1 } { L_2 }
	\frac { j - i } m
	,
	\\
	&
	\| \Delta_i^{ j+1}  \vecf \|_{ \mathbb{R}^n } \geqq
	\frac { L_m L_1 } { L_2 }
	\frac { m + i - j - 1 } m
	\geqq
	\frac { L_m L_1 } { L_2 }
	\frac { [ \frac m2 ] - 1 } m
	=
	\frac { L_m L_1 } { L_2 }
	\frac { j - i - 1 } m
	,
	\\
	&
	\| \Delta_{ i+1 }^j \vecf \|_{ \mathbb{R}^n } \geqq
	\frac { L_m L_1 } { L_2 }
	\frac { j - i } m
	,
	\quad
	\| \Delta_{ i+1 }^{ j+1 } \vecf \|_{ \mathbb{R}^n } \geqq
	\frac { L_m L_1 } { L_2 }
	\frac { j - i } m
	\\
	&
	\| \Delta_i^{ j+2 }  \vecf \|_{ \mathbb{R}^n } \geqq
	\frac { L_m L_1 } { L_2 }
	\frac { m + i - j - 2 } m
	\geqq
	\frac { L_m L_1 } { L_2 }
	\frac { [ \frac m2 ] - 2 } m
	=
	\frac { L_m L_1 } { L_2 }
	\frac { j - i - 2 } m
	,
	\\
	&
	\| \Delta_{ i+1 }^{ j+ 2 }  \vecf \|_{ \mathbb{R}^n } \geqq
	\frac { L_m L_1 } { L_2 }
	\frac { m + i + 1 - j - 2 } m
	\geqq
	\frac { L_m L_1 } { L_2 }
	\frac { [ \frac m2 ] - 1 } m
	=
	\frac { L_m L_1 } { L_2 }
	\frac { j - i - 1 } m
	,
\end{align*}
and hence
\begin{align*}
	A_i \| \Delta_i^j \vecf \|_{ \mathbb{R}^n } \geqq & \
	\frac { L_m L_1 } { L_2 }
	\frac { j - i } m
	,
	\\
	A_i \| \Delta_i^{ j+1 } \vecf \|_{ \mathbb{R}^n } \geqq & \
	\frac { L_m L_1 } { L_2 }
	\frac { j - i  - \frac 12 } m
	,
	\\
	A_{ij} \| \Delta_i^j \vecf \|_{ \mathbb{R}^n } \geqq & \
	\frac { L_m L_1 } { L_2 }
	\frac { j - i - \frac 14 } m
	,
	\\
	A_{ij} \| \Delta_i^{ j+1 } \vecf \|_{ \mathbb{R}^n } \geqq & \
	\frac { L_m L_1 } { L_2 }
	\frac { j - i - 1 } m
	.
\end{align*}
Since $ [ \frac m2 ] - 1 \geqq \frac 12 [ \frac m2 ] $ for $ m \geqq 4 $,
we have
\[
	A_{ij} \| \Delta_i^{ j+1 } \vecf \|_{ \mathbb{R}^n } \geqq
	\frac { L_m L_1 } { L_2 }
	\frac { j - i - 1 } m
	=
	\frac { L_m L_1 } { L_2 }
	\frac { [ \frac m2 ] - 1 } m
	\geqq
	\frac { L_m L_1 } { 2 L_2 }
	\frac { [ \frac m2 ] } m
	=
	\frac { L_m L_1 } { 2 L_2 }
	\frac { j - i } m
	.
\]
\par
The assertion can be proved for $ i > j $ in the same way.
\qed
\end{proof}
\subsection{The proof of convergence}
\begin{lem}
Assume that $ \vecf \in W^{ 2 , \infty } ( \mathbb{R} / \mathbb{Z} ) $ with {\rm \pref{bi-Lipschitz}},
and
{\rm \pref{equilateral}}.
Then it holds that
\[
	\lim_{ m \to \infty }
	\sum_{ i \ne j } \mathscr{P}_{1,ij}^m ( \vecf )
	=
	\mathcal{E}_1 ( \vecf )
	.
\]
\label{lem4}
\end{lem}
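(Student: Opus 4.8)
The plan is to recognise $\sum_{i\ne j}\mathscr{P}_{1,ij}^m(\vecf)$ as (essentially two superimposed copies of) a Riemann sum for the integral defining $\mathcal{E}_1(\vecf)$ in the $\theta$-parametrisation, and to control the discrepancy between the sum and the integral by the estimates of the previous subsection. Set
\[
	D(\theta_1,\theta_2)
	=
	\frac12\,
	\frac{\|\vectau(\theta_1)-\vectau(\theta_2)\|_{\mathbb{R}^n}^2}{\|\vecf(\theta_1)-\vecf(\theta_2)\|_{\mathbb{R}^n}^2}\,
	\|\dot{\vecf}(\theta_1)\|_{\mathbb{R}^n}\|\dot{\vecf}(\theta_2)\|_{\mathbb{R}^n}
	,
\]
so that $\mathcal{E}_1(\vecf)=\iint_{(\mathbb{R}/\mathbb{Z})^2}D\,d\theta_1 d\theta_2$. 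Since $\vecf\in W^{2,\infty}$ and \pref{bi-Lipschitz} holds, $\vectau$ is Lipschitz and $L_1\leqq\|\dot{\vecf}\|_{\mathbb{R}^n}\leqq L_2$; hence $\|\vectau(\theta_1)-\vectau(\theta_2)\|_{\mathbb{R}^n}\leqq C\,d_{\mathbb{R}/\mathbb{Z}}(\theta_1,\theta_2)$ whereas $\|\vecf(\theta_1)-\vecf(\theta_2)\|_{\mathbb{R}^n}\geqq L_1 d_{\mathbb{R}/\mathbb{Z}}(\theta_1,\theta_2)$, so $D\in L^\infty((\mathbb{R}/\mathbb{Z})^2)$, $D$ is continuous off the diagonal, and in particular $\mathcal{E}_1(\vecf)<\infty$. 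Consequently, since by \pref{estimete_Delta_theta} the mesh $\max_i\Delta_i\theta$ is $O(m^{-1})$ and the diagonal strip $\bigcup_i I_i\times I_i$ has measure $\sum_i(\Delta_i\theta)^2=O(m^{-1})$, the Riemann sums converge:
\[
	\sum_{i\ne j}D(\theta_i,\theta_j)\,\Delta_i\theta\,\Delta_j\theta
	\;\longrightarrow\;
	\mathcal{E}_1(\vecf)
	\qquad(m\to\infty).
\]

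The core step is to show that, uniformly for $i\ne j$ with $|i-j|\leqq[\frac m2]$ (recall that the sum over $i\ne j$ may be organised so that $|i-j|\leqq[\frac m2]$),
\[
	\mathscr{P}_{1,ij}^m(\vecf)
	=
	D(\theta_i,\theta_j)\,\Delta_i\theta\,\Delta_j\theta
	+
	O\!\left(\frac1{|i-j|\,m^2}+\frac1{m^3}\right).
\]
I start from $\mathscr{P}_{1,ij}^m(\vecf)=\mathrm{I}_{11}$ and its explicit form. First, the computation inside the proof of Lemma~\ref{estimate_Delta(i+1)-Deltai} (specialised to $\theta_\ast=\theta_i$) shows that $\Delta_i\vecf/\|\Delta_i\vecf\|_{\mathbb{R}^n}$ differs from $\vectau(\theta_i)$ by $O(m^{-1})$ in $\mathbb{R}^n$ and that $\|\Delta_i\vecf\|_{\mathbb{R}^n}$ differs from $\|\dot{\vecf}(\theta_i)\|_{\mathbb{R}^n}\Delta_i\theta$ by $O(m^{-2})$; together with $\|\vectau(\theta_i)-\vectau(\theta_j)\|_{\mathbb{R}^n}\leqq C|i-j|/m$ this replaces the two squared differences in $\mathrm{I}_{11}$ by $\|\vectau(\theta_i)-\vectau(\theta_j)\|_{\mathbb{R}^n}^2$ and $\|\vectau(\theta_{i+1})-\vectau(\theta_{j+1})\|_{\mathbb{R}^n}^2$ respectively, each up to an error $O(|i-j|/m^2)$. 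Next, $\|\Delta_i^j\vecf\|_{\mathbb{R}^n}=\|\vecf(\theta_i)-\vecf(\theta_j)\|_{\mathbb{R}^n}$, Corollary~\ref{cor1} bounds $\|\Delta_{i+1}^{j+1}\vecf-\Delta_i^j\vecf\|_{\mathbb{R}^n}=\|\Delta_j\vecf-\Delta_i\vecf\|_{\mathbb{R}^n}$ by $O(|i-j|/m^2)$ (here $K(\dot{\vecf},2L/(L_2 m))=O(m^{-1})$ since $\dot{\vecf}$ is Lipschitz), and the lower bound $\|\Delta_i^j\vecf\|_{\mathbb{R}^n}\geqq C|i-j|/m$ established just before Lemma~\ref{lem3} holds; these yield $g_{ij,n}=\|\vecf(\theta_i)-\vecf(\theta_j)\|_{\mathbb{R}^n}^2(1+O(m^{-1}))$, and, using \pref{equilateral} together with $\|\Delta_i\vecf\|_{\mathbb{R}^n}=\|\dot{\vecf}(\theta_i)\|_{\mathbb{R}^n}\Delta_i\theta+O(m^{-2})$, $g_{ij,d}=\|\dot{\vecf}(\theta_i)\|_{\mathbb{R}^n}\|\dot{\vecf}(\theta_j)\|_{\mathbb{R}^n}\Delta_i\theta\,\Delta_j\theta\,(1+O(m^{-1}))$; the same reasoning gives $\|\vectau(\theta_{i+1})-\vectau(\theta_{j+1})\|_{\mathbb{R}^n}^2=\|\vectau(\theta_i)-\vectau(\theta_j)\|_{\mathbb{R}^n}^2+O(|i-j|/m^2)$. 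Substituting these expansions, and using the a~priori bounds $g_{ij,d}/g_{ij,n}\leqq C|i-j|^{-2}$ and $\|\vectau(\theta_i)-\vectau(\theta_j)\|_{\mathbb{R}^n}^2\leqq C|i-j|^2/m^2$ to absorb every error term, gives the displayed identity.

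Summing the identity over $i\ne j$ with $|i-j|\leqq[\frac m2]$, the main term reproduces the Riemann sum of the first paragraph, while the errors contribute at most
\[
	C\sum_{i=1}^m\sum_{k=1}^{[\frac m2]}\left(\frac1{k\,m^2}+\frac1{m^3}\right)
	=
	O\!\left(\frac{\log m}{m}\right)\;\longrightarrow\;0 ,
\]
so that $\sum_{i\ne j}\mathscr{P}_{1,ij}^m(\vecf)\to\mathcal{E}_1(\vecf)$, which is the assertion. The main obstacle is precisely the uniformity of the error estimate near the diagonal, where $|i-j|$ is small: there both $\mathscr{P}_{1,ij}^m(\vecf)$ and $D(\theta_i,\theta_j)\Delta_i\theta\Delta_j\theta$ are of size $\sim m^{-2}$, so the crude bound coming from Corollary~\ref{cor1} alone would only give a total error of order $1$; one must instead match $\mathscr{P}_{1,ij}^m(\vecf)$ against the Riemann-sum density, and it is the lower bounds of Lemma~\ref{lem3} (for the denominators $g_{ij,n}$ and $\|\Delta_i^j\vecf\|_{\mathbb{R}^n}$) together with the stability estimates of Corollaries~\ref{cor1} and \ref{cor2} (for the numerators and for the increments of $g_{ij,n}$) that make the remaining error concentrated like $|i-j|^{-1}m^{-2}$, hence summable to zero.
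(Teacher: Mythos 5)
Your argument is correct, but it proceeds along a genuinely different route from the paper. The paper converts $\sum_{i\ne j}\mathscr{P}_{1,ij}^m(\vecf)$ into an integral of a step function $\sum_{i,j}(\cdots)\chi_{ij}(\vartheta_1,\vartheta_2)$ over $(\mathbb{R}/\mathbb{Z})^2$, observes that this integrand converges pointwise a.e.\ to the density of $\mathcal{E}_1(\vecf)$ in the $\theta$-parametrisation, establishes a uniform bound on the integrand by a constant (using $\vectau$ Lipschitz from $W^{2,\infty}$, the bi-Lipschitz lower bound $\|\Delta_i^j\vecf\|\geqq C|i-j|/m$, and Corollary~\ref{cor1}), and finishes with Lebesgue's dominated convergence theorem. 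You instead carry out a ``hard'' quantitative Riemann-sum analysis: you establish, uniformly over $|i-j|\leqq[\frac m2]$, that $\mathscr{P}_{1,ij}^m(\vecf)=D(\theta_i,\theta_j)\Delta_i\theta\,\Delta_j\theta+O(|i-j|^{-1}m^{-2}+m^{-3})$, sum the errors to obtain $O((\log m)/m)$, and separately use Riemann integrability of the bounded, a.e.-continuous density $D$ to get convergence of the exact Riemann sum. Both routes use essentially the same underlying estimates (Lipschitz tangent, bi-Lipschitz lower bound near the diagonal, Corollary~\ref{cor1}, the mesh estimate \pref{estimete_Delta_theta}), but the paper's dominated-convergence argument is softer and avoids bookkeeping of error rates, while your approach is more explicit and yields a quantitative rate $O((\log m)/m)$ for the discrepancy, which is extra information the paper does not record. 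Your observation that the near-diagonal terms $|i-j|=O(1)$ are the danger zone and that the crucial point is the cancellation of the $1/|i-j|^2$ prefactor against the $|i-j|^2$ in the numerators is accurate and well-placed.
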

\begin{proof}
Recall that
\[
	\mathscr{P}_{1,ij} ( \vecf )
	=
	\frac 14
	\frac { g_{ij,d} } { g_{ij,n} }
	\left\{
	\left\|
	\frac { \Delta_i \vecf } { \| \Delta_i \vecf \|_{ \mathbb{R}^n } }
	-
	\frac { \Delta_j \vecf } { \| \Delta_j \vecf \|_{ \mathbb{R}^n } }
	\right\|_{ \mathbb{R}^n }^2
	+
	\left\|
	\frac { \Delta_{ i+1 } \vecf } { \| \Delta_{ i+1 } \vecf \|_{ \mathbb{R}^n } }
	-
	\frac { \Delta_{ j+1 } \vecf } { \| \Delta_{ j+1 } \vecf \|_{ \mathbb{R}^n } }
	\right\|_{ \mathbb{R}^n }^2
	\right\}
	.
\]
Let $ \chi_{ij} $ be the characteristic function of the set $ [ \theta_i , \theta_{ i+1 } ) \times [ \theta_j , \theta_{ j+1 } ) $.
Then
\begin{align*}
	\mathcal{E}_1^m ( \vecf )
	= & \
	\iint_{ ( \mathbb{R} / \mathbb{Z} )^2 }
	\sum_{ i,j }
	\frac 1 { 4 g_{ij,n} }
	\left\{
	\left\|
	\frac { \Delta_i \vecf } { \| \Delta_i \vecf \|_{ \mathbb{R}^n } }
	-
	\frac { \Delta_j \vecf } { \| \Delta_j \vecf \|_{ \mathbb{R}^n } }
	\right\|_{ \mathbb{R}^n }^2
	+
	\left\|
	\frac { \Delta_{ i+1 } \vecf } { \| \Delta_{ i+1 } \vecf \|_{ \mathbb{R}^n } }
	-
	\frac { \Delta_{ j+1 } \vecf } { \| \Delta_{ j+1 } \vecf \|_{ \mathbb{R}^n } }
	\right\|_{ \mathbb{R}^n }^2
	\right\}
	\\
	& \quad \quad \qquad \qquad
	\times
	\frac { \| \Delta_i \vecf \|_{ \mathbb{R}^n } } { | \Delta_i \theta | }
	\frac { \| \Delta_j \vecf \|_{ \mathbb{R}^n } } { | \Delta_j \theta | }
	\chi_{ij} ( \vartheta_1 , \vartheta_2 )
	\, d \vartheta_2 d \vartheta_2
	.
\end{align*}
It holds for
a.e.\ $( \vartheta_1 , \vartheta_2 ) $ that
\begin{align*}
	&
	\sum_{ i,j }
	\frac 1 { 4 g_{ij,n} }
	\left\{
	\left\|
	\frac { \Delta_i \vecf } { \| \Delta_i \vecf \|_{ \mathbb{R}^n } }
	-
	\frac { \Delta_j \vecf } { \| \Delta_j \vecf \|_{ \mathbb{R}^n } }
	\right\|_{ \mathbb{R}^n }^2
	+
	\left\|
	\frac { \Delta_{ i+1 } \vecf } { \| \Delta_{ i+1 } \vecf \|_{ \mathbb{R}^n } }
	-
	\frac { \Delta_{ j+1 } \vecf } { \| \Delta_{ j+1 } \vecf \|_{ \mathbb{R}^n } }
	\right\|_{ \mathbb{R}^n }^2
	\right\}
	\\
	& \quad \quad \qquad
	\times
	\frac { \| \Delta_i \vecf \|_{ \mathbb{R}^n } } { | \Delta_i \theta | }
	\frac { \| \Delta_j \vecf \|_{ \mathbb{R}^n } } { | \Delta_j \theta | }
	\chi_{ij} ( \vartheta_1 , \vartheta_2 )
	\\
	& \quad
	\to
	\frac 12
	\frac
	{ \| \vectau ( \vartheta_1 ) - \vectau ( \vartheta_2 ) \|_{ \mathbb{R}^n } }
	{ \| \vecf ( \vartheta_1 ) - \vecf ( \vartheta_2 ) \|_{ \mathbb{R}^n } }
	\| \dot { \vecf } ( \vartheta_1 ) \|_{ \mathbb{R}^n }
	\| \dot { \vecf } ( \vartheta_2 ) \|_{ \mathbb{R}^n }
\end{align*}
as $ m \to \infty $.
Since $ \dot { \vecf } $ is Lipschitz,
we have
\[
	K \left( \dot { \vecf } , \frac { 2L } { L_2 m } \right)
	\leqq
	\frac C m
\]
for some positive constant independent of $ m $.
It follows from Corollay \ref{cor1} that
\[
	\left\|
	\frac { \Delta_i \vecf } { \| \Delta_i \vecf \|_{ \mathbb{R}^n } }
	-
	\frac { \Delta_j \vecf } { \| \Delta_j \vecf \|_{ \mathbb{R}^n } }
	\right\|_{ \mathbb{R}^n }
	\leqq
	\frac { C | i - j | } m
	.
\]
A similar estimate holds for $ \displaystyle{
	\left\|
	\frac { \Delta_{ i+1 } \vecf } { \| \Delta_{ i+1 } \vecf \|_{ \mathbb{R}^n } }
	-
	\frac { \Delta_{ j+1 } \vecf } { \| \Delta_{ j+1 } \vecf \|_{ \mathbb{R}^n } }
	\right\|_{ \mathbb{R}^n }
} $.
Combining this with
\[
	\| \Delta_i^j \vecf \|_{ \mathbb{R} } \geqq \frac { C | i - j | } m
\]
for $ | i - j | \leqq \left[ \frac m2 \right] $ and \pref{bi-Lipschitz}
we obtain
\begin{align*}
	&
	\sum_{ i,j }
	\frac 1 { 4 g_{ij,n} }
	\left\{
	\left\|
	\frac { \Delta_i \vecf } { \| \Delta_i \vecf \|_{ \mathbb{R}^n } }
	-
	\frac { \Delta_j \vecf } { \| \Delta_j \vecf \|_{ \mathbb{R}^n } }
	\right\|_{ \mathbb{R}^n }^2
	+
	\left\|
	\frac { \Delta_{ i+1 } \vecf } { \| \Delta_{ i+1 } \vecf \|_{ \mathbb{R}^n } }
	-
	\frac { \Delta_{ j+1 } \vecf } { \| \Delta_{ j+1 } \vecf \|_{ \mathbb{R}^n } }
	\right\|_{ \mathbb{R}^n }^2
	\right\}
	\\
	& \quad \quad \qquad
	\times
	\frac { \| \Delta_i \vecf \|_{ \mathbb{R}^n } } { | \Delta_i \theta | }
	\frac { \| \Delta_j \vecf \|_{ \mathbb{R}^n } } { | \Delta_j \theta | }
	\chi_{ij} ( \vartheta_1 , \vartheta_2 )
	\\
	& \quad
	\leqq
	C
	\sum_{ i,j }
	\frac 1 { \left( \frac Lm | i - j |^2 \right)^2 }
	\left( \frac { | i - j | } m \right)^2
	L_2^2
	\chi_{ij} ( \vartheta_1 , \vartheta_2 )
	\leqq
	C
	.
\end{align*}
Consequently the assertion is derived from Lebesgue's convergence theorem.
\qed
\end{proof}
\par
We now show $ \displaystyle{ \sum_{ i \ne j } \mathscr{R}_{1,ij}^m ( \vecf ) \to 0 } $.
Under \pref{equilateral}
$ 	\mathrm{I}_{13} =
	\mathrm{II} =
	\mathrm{III} =
	\mathrm{IV} = 0 $.
Therefore
\[
	\mathscr{R}_{1,ij}^m ( \vecf )
	=
	\mathrm{I}_{12} + \mathrm{I}_2
\]
\begin{lem}
Suppose that $ \vecf \in H^{ \frac 32 } ( \mathbb{R} / \mathbb{Z} ) $,
$ \vectau \in H^{ \frac 12 } ( \mathbb{R} / \mathbb{Z} ) $,
{\rm \pref{bi-Lipschitz}},
and {\rm \pref{equilateral}}.
Furthermore we assume
\[
	\lim_{ \epsilon \to + 0 }
	\epsilon^{-1} K ( \dot { \vecf } , \epsilon )^2
	= 0
	.
\]
Then we have
\[
	\lim_{ m \to \infty } \sum \mathrm{I}_{12} = 0 .
\]
\end{lem}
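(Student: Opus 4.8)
The plan is to exploit a twofold smallness hidden in $\mathrm{I}_{12}$. Set $a=A_{ij}\|\Delta_i^j\vecf\|_{\mathbb{R}^n}$, $b=A_{ij}\|\Delta_{i+1}^{j+1}\vecf\|_{\mathbb{R}^n}$, and write $P$, $Q$ for the two inner products $\frac{\Delta_i\vecf}{\|\Delta_i\vecf\|_{\mathbb{R}^n}}\cdot\frac{\Delta_j\vecf}{\|\Delta_j\vecf\|_{\mathbb{R}^n}}$ and $\frac{\Delta_{i+1}\vecf}{\|\Delta_{i+1}\vecf\|_{\mathbb{R}^n}}\cdot\frac{\Delta_{j+1}\vecf}{\|\Delta_{j+1}\vecf\|_{\mathbb{R}^n}}$ appearing in $\mathrm{I}_{12}$. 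Since $1-\frac ba=\frac{a-b}{a}$ and $1-\frac ab=-\frac{a-b}{b}$,
\[
	\mathrm{I}_{12}
	=
	\frac12\,\frac{g_{ij,d}}{g_{ij,n}}\,(a-b)\left(\frac Pa-\frac Qb\right)
	=
	\frac12\,\frac{g_{ij,d}}{g_{ij,n}}\left[(a-b)\,\frac{P-Q}{a}-Q\,\frac{(a-b)^2}{ab}\right].
\]
In either surviving term there are now two small factors --- the difference $a-b$ of two nearby chord half-lengths, together with either the difference $P-Q$ of two nearby tangent inner products or a second factor $a-b$ --- and this is precisely what will yield the extra power of $K(\dot{\vecf},\cdot)$ needed to defeat the factor $m$ coming from the summation.

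The second step is to collect elementary bounds, all for $i\neq j$ with $|i-j|\leqq[\frac m2]$, to which the summation reduces by the symmetry observations recalled earlier. From \pref{bi-Lipschitz}, \pref{equilateral} and the lower bound $\|\Delta_i^j\vecf\|_{\mathbb{R}^n}\geqq\frac{LL_1}{2L_2}\,\frac{|i-j|}{m}$ established above, one gets $g_{ij,d}=(L_m/m)^2$ and $g_{ij,n}\geqq c\,|i-j|^2/m^2$, hence $g_{ij,d}/g_{ij,n}\leqq C/|i-j|^2$. Lemma \ref{lem3} yields $a,b\geqq c\,|i-j|/m$, so $1/a\leqq Cm/|i-j|$ and $1/(ab)\leqq Cm^2/|i-j|^2$. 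Corollary \ref{cor2} gives $|a-b|\leqq\frac{CL|i-j|}{m}\,K(\dot{\vecf},\frac{2L}{L_2m})$. Finally, expanding
\[
	P-Q
	=
	\left(\frac{\Delta_i\vecf}{\|\Delta_i\vecf\|_{\mathbb{R}^n}}-\frac{\Delta_{i+1}\vecf}{\|\Delta_{i+1}\vecf\|_{\mathbb{R}^n}}\right)\cdot\frac{\Delta_j\vecf}{\|\Delta_j\vecf\|_{\mathbb{R}^n}}
	+
	\frac{\Delta_{i+1}\vecf}{\|\Delta_{i+1}\vecf\|_{\mathbb{R}^n}}\cdot\left(\frac{\Delta_j\vecf}{\|\Delta_j\vecf\|_{\mathbb{R}^n}}-\frac{\Delta_{j+1}\vecf}{\|\Delta_{j+1}\vecf\|_{\mathbb{R}^n}}\right)
\]
and applying Corollary \ref{cor1} with index gap $1$ gives $|P-Q|\leqq C\,K(\dot{\vecf},\frac{2L}{L_2m})$, while $|Q|\leqq1$ by Cauchy--Schwarz.

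Substituting these estimates into the displayed identity, both terms are dominated by $C\,|i-j|^{-2}\,K(\dot{\vecf},\frac{2L}{L_2m})^2$, uniformly in $i\neq j$ with $|i-j|\leqq[\frac m2]$; the powers of $m$ and $|i-j|$ cancel exactly in each term. Summing over the $\leqq m$ values of $i$ and over $k=|i-j|$, using $\sum_{k\geqq1}k^{-2}<\infty$,
\[
	\sum_{i\neq j}|\mathrm{I}_{12}|
	\leqq
	C\,m\,K\!\left(\dot{\vecf},\frac{2L}{L_2m}\right)^2 .
\]
With $\epsilon=2L/(L_2m)$ the right-hand side equals $C\,\epsilon^{-1}K(\dot{\vecf},\epsilon)^2$, which tends to $0$ as $m\to\infty$ by the standing hypothesis $\lim_{\epsilon\to+0}\epsilon^{-1}K(\dot{\vecf},\epsilon)^2=0$. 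Hence $\sum\mathrm{I}_{12}\to0$, as claimed.

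The main obstacle is the very first step. A crude estimate that bounds the coefficients $1-b/a$ and $1-a/b$ separately only extracts one factor of $K(\dot{\vecf},\frac{2L}{L_2m})$ and is left with the bound $C\,m\,K(\dot{\vecf},\frac{2L}{L_2m})$, which need not vanish under the stated hypothesis. One has to first combine the two summands into $(a-b)(P/a-Q/b)$ and only then peel off a \emph{second} small factor; making the powers of $m$, $|i-j|$ and $K$ balance also forces one to use the sharper lower bounds from Lemma \ref{lem3} in the borderline regime $|i-j|\sim m/2$.
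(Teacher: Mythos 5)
Your proof is correct and takes essentially the same approach as the paper: combine the two summands of $\mathrm{I}_{12}$ into a single product $(a-b)(P/a-Q/b)$ and then split again so that each piece contains two small factors ($a-b$ and $P-Q$, or $(a-b)^2$), giving the bound $C\,|i-j|^{-2}K^2$ and hence $\sum|\mathrm{I}_{12}|\leqq CmK^2\to 0$. The paper packages the split symmetrically using $P+Q$ and $P-Q$ together with the identity $\veca\cdot\vecb-\vecc\cdot\vecd=-\tfrac12\{(\veca-\vecc)-(\vecb-\vecd)\}\cdot(\veca-\vecb+\vecc-\vecd)$, whereas you use the asymmetric split and a direct telescoping of $P-Q$; these are cosmetic variants of the same argument.
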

\begin{proof}
$ \mathrm{I}_{12} $ can be written as
\begin{align*}
	\mathrm{I}_{12}
	= & \
	\frac 14
	\frac { g_{ij,d} } { g_{ij,n} }
	\left\{
	-
	\frac {
	\left\{ A_{ij}
	\left(
	\| \Delta_i^j \vecf \|_{ \mathbb{R}^n }
	- 
	\| \Delta_{ i+1 }^{ j+1 } \vecf \|_{ \mathbb{R}^n }
	\right)
	\right\}^2
	}
	{
	A_{ij} \| \Delta_i^j \vecf \|_{ \mathbb{R}^n }
	A_{ij} \| \Delta_{ i+1 }^{ j+1 } \vecf \|_{ \mathbb{R}^n }
	}
	\right.
	\\
	& \quad \qquad \qquad
	\left.
	\times
	\left(
	\frac { \Delta_i \vecf } { \| \Delta_i \vecf \|_{ \mathbb{R}^n } }
	\cdot
	\frac { \Delta_j \vecf } { \| \Delta_j \vecf \|_{ \mathbb{R}^n } }
	+
	\frac { \Delta_{ i+1 } \vecf } { \| \Delta_{ i+1 } \vecf \|_{ \mathbb{R}^n } }
	\cdot
	\frac { \Delta_{ j+1 } \vecf } { \| \Delta_{ j+1 } \vecf \|_{ \mathbb{R}^n } }
	\right)
	\right.
	\\
	& \quad \qquad \quad
	\left.
	+ \,
	\left(
	\frac 1 { A_{ij} \| \Delta_i^j \vecf \|_{ \mathbb{R}^n } }
	+
	\frac 1 { A_{ij} \| \Delta_{ i+1 }^{ j+1 } \vecf \|_{ \mathbb{R}^n } }
	\right)
	A_{ij}
	\left(
	\| \Delta_i^j \vecf \|_{ \mathbb{R}^n }
	- 
	\| \Delta_{ i+1 }^{ j+1 } \vecf \|_{ \mathbb{R}^n }
	\right)
	\right.
	\\
	& \quad \qquad \qquad
	\left.
	\times
	\left(
	\frac { \Delta_i \vecf } { \| \Delta_i \vecf \|_{ \mathbb{R}^n } }
	\cdot
	\frac { \Delta_j \vecf } { \| \Delta_j \vecf \|_{ \mathbb{R}^n } }
	-
	\frac { \Delta_{ i+1 } \vecf } { \| \Delta_{ i+1 } \vecf \|_{ \mathbb{R}^n } }
	\cdot
	\frac { \Delta_{ j+1 } \vecf } { \| \Delta_{ j+1 } \vecf \|_{ \mathbb{R}^n } }
	\right)
	\right\}
	.
\end{align*}
Now we use the formula
\[
	\veca \cdot \vecb - \vecc \cdot \vecd
	=
	- \frac 12
	\left\{
	( \veca - \vecc ) - ( \vecb - \vecd )
	\right\}
	\cdot
	( \veca - \vecb + \vecc - \vecd )
\]
for four unit vectors $ \veca $,
$ \vecb $,
$ \vecc $,
$ \vecd $ to get
\begin{align*}
	\mathrm{I}_{12}
	= & \
	\frac 14
	\frac { g_{ij,d} } { g_{ij,n} }
	\left[
	-
	\frac {
	\left\{ A_{ij}
	\left(
	\| \Delta_i^j \vecf \|_{ \mathbb{R}^n }
	- 
	\| \Delta_{ i+1 }^{ j+1 } \vecf \|_{ \mathbb{R}^n }
	\right)
	\right\}^2
	}
	{
	A_{ij} \| \Delta_i^j \vecf \|_{ \mathbb{R}^n }
	A_{ij} \| \Delta_{ i+1 }^{ j+1 } \vecf \|_{ \mathbb{R}^n }
	}
	\right.
	\\
	& \quad \qquad \qquad
	\left.
	\times
	\left(
	\frac { \Delta_i \vecf } { \| \Delta_i \vecf \|_{ \mathbb{R}^n } }
	\cdot
	\frac { \Delta_j \vecf } { \| \Delta_j \vecf \|_{ \mathbb{R}^n } }
	+
	\frac { \Delta_{ i+1 } \vecf } { \| \Delta_{ i+1 } \vecf \|_{ \mathbb{R}^n } }
	\cdot
	\frac { \Delta_{ j+1 } \vecf } { \| \Delta_{ j+1 } \vecf \|_{ \mathbb{R}^n } }
	\right)
	\right.
	\\
	& \quad \qquad \quad
	\left.
	- \,
	\frac 12
	\left(
	\frac 1 { A_{ij} \| \Delta_i^j \vecf \|_{ \mathbb{R}^n } }
	+
	\frac 1 { A_{ij} \| \Delta_{ i+1 }^{ j+1 } \vecf \|_{ \mathbb{R}^n } }
	\right)
	A_{ij}
	\left(
	\| \Delta_i^j \vecf \|_{ \mathbb{R}^n }
	- 
	\| \Delta_{ i+1 }^{ j+1 } \vecf \|_{ \mathbb{R}^n }
	\right)
	\right.
	\\
	& \quad \qquad \qquad
	\left.
	\times
	\left\{
	\left(
	\frac { \Delta_i \vecf } { \| \Delta_i \vecf \|_{ \mathbb{R}^n } }
	-
	\frac { \Delta_{ i+1 } \vecf } { \| \Delta_{ i+1 } \vecf \|_{ \mathbb{R}^n } }
	\right)
	-
	\left(
	\frac { \Delta_j \vecf } { \| \Delta_j \vecf \|_{ \mathbb{R}^n } }
	-
	\frac { \Delta_{ j+1 } \vecf } { \| \Delta_{ j+1 } \vecf \|_{ \mathbb{R}^n } }
	\right)
	\right\}
	\right.
	\\
	& \quad \qquad \qquad \qquad
	\left.
	\cdot
	\left(
	\frac { \Delta_i \vecf } { \| \Delta_i \vecf \|_{ \mathbb{R}^n } }
	-
	\frac { \Delta_j \vecf } { \| \Delta_j \vecf \|_{ \mathbb{R}^n } }
	+
	\frac { \Delta_{ i+1 } \vecf } { \| \Delta_{ i+1 } \vecf \|_{ \mathbb{R}^n } }
	-
	\frac { \Delta_{ j+1 } \vecf } { \| \Delta_{ j+1 } \vecf \|_{ \mathbb{R}^n } }   
	\right)
	\right]
	.
\end{align*}
As said befere we may assume $ 1 \leqq | i - j | \leqq \left[ \frac m2 \right] $.
From \pref{bi-Lipschitz} we have
\[
	g_{ij,d} = \left( \frac { L_m } m \right)^2 \leqq \left( \frac Lm \right)^2 .
\]
For sufficient large $ m $
\[
	g_{ij,n} \geqq \left( \frac { L L_1 | i - j | } { 2 L_2 m } \right)^2 .
\]
It follows from Corollary \ref{cor2} that
\[
	\left\{
	A_{ij} \left(
	\| \Delta_{ i+1 }^{ j+1 } \vecf \|_{ \mathbb{R}^n }
	-
	\| \Delta_i^j \vecf \|_{ \mathbb{R}^n }
	\right)
	\right\}^2
	\leqq
	\left\{
	\frac { C L | i - j | } m
	K \left( \dot { \vecf } , \frac { 2 L } { L_2 m } \right)
	\right\}^2
	.
\]
Lemma \ref{lem3} gives us the estimate
\[
	A_{ij} \| \Delta_i^j \vecf \|_{ \mathbb{R}^n }
	A_{ij} \| \Delta_{ i+1 }^{ j+1 } \vecf \|_{ \mathbb{R}^n }
	\geqq
	\left(
	\frac { L L_1 } { 2 L_2 }
	\frac { | i - j | - \frac 14} m
	\right)^2
\]
for large $ m $.
We find from these estimates that
\begin{align*}
	&
	\left|
	 \frac 12
	\frac { g_{ij,d} } { g_{ij,n} }
	\frac {
	\left\{ A_{ij}
	\left(
	\| \Delta_i^j \vecf \|_{ \mathbb{R}^n }
	- 
	\| \Delta_{ i+1 }^{ j+1 } \vecf \|_{ \mathbb{R}^n }
	\right)
	\right\}^2
	}
	{
	A_{ij} \| \Delta_i^j \vecf \|_{ \mathbb{R}^n }
	A_{ij} \| \Delta_{ i+1 }^{ j+1 } \vecf \|_{ \mathbb{R}^n }
	}
	\right.
	\\
	& \quad \qquad \qquad
	\left.
	\times
	\left(
	\frac { \Delta_i \vecf } { \| \Delta_i \vecf \|_{ \mathbb{R}^n } }
	\cdot
	\frac { \Delta_j \vecf } { \| \Delta_j \vecf \|_{ \mathbb{R}^n } }
	+
	\frac { \Delta_{ i+1 } \vecf } { \| \Delta_{ i+1 } \vecf \|_{ \mathbb{R}^n } }
	\cdot
	\frac { \Delta_{ j+1 } \vecf } { \| \Delta_{ j+1 } \vecf \|_{ \mathbb{R}^n } }
	\right)
	\right|
	\\
	\leqq & \
	\left[
	\frac Lm \frac { 2 L_2 m } { L L_1 | i - j | }
	\left\{
	\frac { C L | i - j |} m
	K \left( \dot { \vecf } , \frac { 2 L } { L_2 m } \right)
	\right\}
	\frac { 2 L_2 m }
	{ L L_1 ( | i - j |- \frac 14 ) }
	\right]^2
	\\
	\leqq & \
	\frac C { ( | i - j |- \frac 14 )^2 }
	K \left( \dot { \vecf } , \frac { 2 L } { L_2 m } \right)^2
	.
\end{align*}
On the other hand \ref{cor1} implies
\[
	\left\|
	\frac { \Delta_i \vecf } { \| \Delta_i \vecf \|_{ \mathbb{R}^n } }
	-
	\frac { \Delta_{ i+1 } \vecf } { \| \Delta_{ i+1 } \vecf \|_{ \mathbb{R}^n } }
	\right\|_{ \mathbb{R}^n }
	\leqq
	C K \left( \dot { \vecf } , \frac { 2 L } { L_2 m } \right)
	.
\]
Therefore
\begin{align*}
	&
	\left|
	\frac 14
	\frac { g_{ij,d} } { g_{ij,n} }
	\left(
	\frac 1 { A_{ij} \| \Delta_i^j \vecf \|_{ \mathbb{R}^n } }
	+
	\frac 1 { A_{ij} \| \Delta_{ i+1 }^{ j+1 } \vecf \|_{ \mathbb{R}^n } }
	\right)
	A_{ij}
	\left(
	\| \Delta_i^j \vecf \|_{ \mathbb{R}^n }
	- 
	\| \Delta_{ i+1 }^{ j+1 } \vecf \|_{ \mathbb{R}^n }
	\right)
	\right.
	\\
	& \quad \qquad
	\left.
	\left.
	\times
	\left\{
	\left(
	\frac { \Delta_i \vecf } { \| \Delta_i \vecf \|_{ \mathbb{R}^n } }
	-
	\frac { \Delta_{ i+1 } \vecf } { \| \Delta_{ i+1 } \vecf \|_{ \mathbb{R}^n } }
	\right)
	-
	\left(
	\frac { \Delta_j \vecf } { \| \Delta_j \vecf \|_{ \mathbb{R}^n } }
	-
	\frac { \Delta_{ j+1 } \vecf } { \| \Delta_{ j+1 } \vecf \|_{ \mathbb{R}^n } }
	\right)
	\right\}
	\right.
	\right.
	\\
	& \quad \qquad \qquad
	\left.
	\left.
	\cdot
	\left(
	\frac { \Delta_i \vecf } { \| \Delta_i \vecf \|_{ \mathbb{R}^n } }
	-
	\frac { \Delta_j \vecf } { \| \Delta_j \vecf \|_{ \mathbb{R}^n } }
	+
	\frac { \Delta_{ i+1 } \vecf } { \| \Delta_{ i+1 } \vecf \|_{ \mathbb{R}^n } }
	-
	\frac { \Delta_{ j+1 } \vecf } { \| \Delta_{ j+1 } \vecf \|_{ \mathbb{R}^n } }   
	\right)
	\right]
	\right|
	\\
	& \quad
	\leqq
	\frac Lm \frac { 2 L_2 m } { L L_1 | i - j | }
	\left\{
	\frac { C L | i - j |} m
	K \left( \dot { \vecf } , \frac { 2 L } { L_2 m } \right)
	\right\}
	K \left( \dot { \vecf } , \frac { 2 L } { L_2 m } \right)
	\left(
	\frac { 2 L_2 m }
	{ L L_1 ( | i - j |- \frac 14 ) }
	\right)^2
	\\
	& \quad
	\leqq
	\frac C { ( | i - j |- \frac 14 )^2 }
	K \left( \dot { \vecf } , \frac { 2 L } { L_2 m } \right)^2
	.
\end{align*}
As a result we obtain
\begin{align*}
	\sum | \mathrm{I}_{12} |
	\leqq & \
	\sum_{ i=1 }^m
	\sum_{ k=1 }^{ \left[ \frac m2 \right] }
	\frac C { ( k - \frac 14 )^2 }
	K \left( \dot { \vecf } , \frac { 2 L } { L_2 m } \right)^2
	\\
	\leqq & \
	C m
	K \left( \dot { \vecf } , \frac { 2 L } { L_2 m } \right)^2
	\to 
	0	\quad ( m \to \infty ) .
\end{align*}
\qed
\end{proof}
\par
To estimate $ \displaystyle{ \sum_{ i \ne j } \mathrm{I}_2 } $,
we decomposed $ \mathrm{I}_2 $ into
\begin{align*}
	\mathrm{I}_2
	= & \
	\mathrm{I}_{21}
	+
	\mathrm{I}_{22}
	,
	\\
	\mathrm{I}_{21}
	= & \
	-
	\frac 12
	\frac
	{
	A_{ij}
	\left(
	\| \Delta_{ i+1 }^{ j+1 } \vecf \|_{ \mathbb{R}^n }
	-
	\| \Delta_i^j \vecf \|_{ \mathbb{R}^n }
	\right)
	}
	{ g_{ij,n} }
	\\
	& \quad
	\times
	\left\{
	\frac {
	\left( \Delta_i A_j \| \Delta_i^j \vecf \|_{ \mathbb{R}^n } \right)
	\left( \Delta_j A_i \| \Delta_i^j \vecf \|_{ \mathbb{R}^n } \right)
	}
	{ A_{ij} \| \Delta_i^j \vecf \|_{ \mathbb{R}^n } }
	-
	\frac
	{
	\left( \Delta_i A_j \| \Delta_{ i+1 }^{ j+1 } \vecf \|_{ \mathbb{R}^n } \right)
	\left( \Delta_j A_i \| \Delta_{ i+1 }^{ j+1 } \vecf \|_{ \mathbb{R}^n } \right)
	}
	{ A_{ij} \| \Delta_{ i+i }^{ j+1 } \vecf \|_{ \mathbb{R}^n } }
	\right\}
	,
	\\
	\mathrm{I}_{22}
	= & \
	-
	\frac 12
	\frac
	{
	\left\{
	\Delta_i A_j
	\left(
	\| \Delta_{ i+1 }^{ j+1 } \vecf \|_{ \mathbb{R}^n }
	-
	\| \Delta_i^j \vecf \|_{ \mathbb{R}^n }
	\right)
	\right\}
	\left\{
	\Delta_j A_i
	\left(
	\| \Delta_{ i+1 }^{ j+1 } \vecf \|_{ \mathbb{R}^n }
	-
	\| \Delta_i^j \vecf \|_{ \mathbb{R}^n }
	\right)
	\right\}
	}
	{ g_{ij,n} }
	.
\end{align*}
Here we use \pref{equilateral}.
To estimate each part,
we need the following lemma.
\begin{lem}
Assume that $ m $ is sufficient large.
When $ | i - j | \leqq [ \frac m2 ] $,
\[
	\left|
	\Delta_j A_i
	\left(
	\| \Delta_i^j \vecf \|_{ \mathbb{R}^n }
	-
	\| \Delta_{ i+1 }^{ j+1 } \vecf \|_{ \mathbb{R}^n }
	\right)
	\right|
	\leqq
	\frac { CL } m
	K \left( \dot { \vecf } , \frac { 2 L } { L_2 m } \right)
\]
holds.
\end{lem}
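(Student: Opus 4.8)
The quantity to be estimated is $\Delta_j A_i u_{ij}$ with $u_{ij}:=\|\Delta_i^j\vecf\|_{\mathbb{R}^n}-\|\Delta_{i+1}^{j+1}\vecf\|_{\mathbb{R}^n}$. The plan is to rewrite $u_{ij}$ so that a factor controlled by $K(\dot\vecf,\,\cdot\,)$ is exposed, and then to expand $\Delta_j A_i u_{ij}$ by the discrete Leibniz rules and estimate term by term. Since $\Delta_i^j\vecf-\Delta_{i+1}^{j+1}\vecf=\Delta_i\vecf-\Delta_j\vecf$, the elementary identity $\|\veca\|_{\mathbb{R}^n}^2-\|\vecb\|_{\mathbb{R}^n}^2=\langle\veca-\vecb,\veca+\vecb\rangle_{\mathbb{R}^n}$ yields the exact representation
\[
	u_{ij}=\frac{\langle\vecd_{ij},\vecc_{ij}\rangle_{\mathbb{R}^n}}{S_{ij}},\qquad
	\vecd_{ij}:=\Delta_i\vecf-\Delta_j\vecf,\qquad
	\vecc_{ij}:=\Delta_i^j\vecf+\Delta_{i+1}^{j+1}\vecf,\qquad
	S_{ij}:=\|\Delta_i^j\vecf\|_{\mathbb{R}^n}+\|\Delta_{i+1}^{j+1}\vecf\|_{\mathbb{R}^n}.
\]
The decisive structural point is that $\vecd_{ij}$ and $\vecc_{ij}$ are additively separable in the indices $i$ and $j$, so $\Delta_i\Delta_j\vecd_{ij}=\Delta_i\Delta_j\vecc_{ij}=0$; of the three, only the scalar $S_{ij}$ has a nonvanishing mixed difference.

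Apply $\Delta_j A_i$ to this quotient, iterating Fact \ref{fact} together with $A_k(u_kv_k)=A_k(u)A_k(v)+\tfrac14(\Delta_ku_k)(\Delta_kv_k)$. Because of the separability, the outcome is a finite sum — with a number of summands independent of $m$ — of products, each of which contains exactly one factor equal to $\vecd_{ij}$, an average of $\vecd_{ij}$, a single difference of $\vecd_{ij}$, or an average of a single difference of $\vecd_{ij}$ (but never the mixed difference $\Delta_i\Delta_j\vecd_{ij}$, which vanishes), exactly one factor of the same kind built from $\vecc_{ij}$, and remaining factors built from $S_{ij}$, $S_{ij}^{-1}$ and their averages and differences up to second order. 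One then estimates each factor, for $m$ large and $1\leqq|i-j|\leqq[\tfrac m2]$, under \pref{bi-Lipschitz} and \pref{equilateral} (and $\vecf\in H^{3/2}(\mathbb{R}/\mathbb{Z})$, as in Corollary \ref{cor1}); writing $K:=K(\dot\vecf,\tfrac{2L}{L_2m})$: the $\vecd$-factors are bounded by $\tfrac{CL|i-j|}{m}K$ when undifferenced (Corollary \ref{cor1}) and by $\tfrac{CL}{m}K$ when a difference is applied (Lemma \ref{estimate_Delta(i+1)-Deltai} and \pref{estimete_Delta_theta}, since e.g.\ $\Delta_j\vecd_{ij}=-(\Delta_{j+1}\vecf-\Delta_j\vecf)$ and $\Delta_i\vecd_{ij}=\Delta_{i+1}\vecf-\Delta_i\vecf$); the $\vecc$-factors are bounded by $\tfrac{CL|i-j|}{m}$ when undifferenced and by $\tfrac{2L}{m}$ when differenced (triangle inequality, \pref{bi-Lipschitz}, \pref{equilateral}); and $\tfrac{cL|i-j|}{m}\leqq S_{ij},A_iS_{ij},A_jS_{ij},A_{ij}S_{ij}\leqq\tfrac{CL|i-j|}{m}$ by \pref{bi-Lipschitz}, the two-sided bound for $\|\Delta_i^j\vecf\|_{\mathbb{R}^n}$ established just before Lemma \ref{lem3}, and Lemma \ref{lem3}; combined with $|\Delta_iS_{ij}|,|\Delta_jS_{ij}|\leqq\tfrac{2L}{m}$ (reverse triangle inequality), this gives that the averages of $S_{ij}^{-1}$ are comparable to $\tfrac{m}{L|i-j|}$ and that its single and mixed differences are $\leqq\tfrac{Cm}{L|i-j|^2}$.

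The estimate is then closed by a power count in $\tfrac{|i-j|}{m}$: an undifferenced or merely averaged $\vecd$-, $\vecc$- or $S$-factor has degree $1$ (and a $\vecd$-factor in addition carries a factor $K$), each difference applied to it lowers the degree by $1$, and an $S^{-1}$-factor has degree $-1$ with each difference lowering it further. Hence $u_{ij}$ has degree $1$; the operator $A_i$ does not raise the degree; and the single $\Delta_j$ must hit some factor, strictly lowering its degree, so every summand of $\Delta_j A_i u_{ij}$ has degree $\leqq0$. Since each summand still carries a $\vecd$-factor — hence a factor $K$ — and, because of the differencing, at least a factor $\tfrac{L}{m}$, and since $|i-j|\geqq1$ makes any non-positive power of $|i-j|$ at most $1$, each summand is $\leqq\tfrac{CL}{m}K$, and summing the finitely many summands gives the claim. (The value $|i-j|=1$ is exceptional, since then $S_{ij}$ vanishes at one point of the stencil of $\Delta_j A_i$; but there several stencil values of $u$ vanish by \pref{equilateral}, and the estimate reduces directly to Corollary \ref{cor2}.)

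I expect the main obstacle to be the bookkeeping of this expansion: one must verify, uniformly over all Leibniz summands, both that a $\vecd$-factor survives — which is precisely why the separability $\Delta_i\Delta_j\vecd_{ij}=0$ matters, for it means that the lone $\Delta_j$ at our disposal cannot annihilate a difference of $\vecd_{ij}$ — and that the net power of $|i-j|$ is non-positive. The delicate summands are those in which a difference lands on the denominator: there one crucially uses that single differences of $S_{ij}$ are $O(\tfrac{L}{m})$, free of any factor $|i-j|$, while $S_{ij}$ itself is bounded below by $\tfrac{cL|i-j|}{m}$, so that the reciprocal and its differences do not exhaust the $|i-j|$-budget carried by the $\vecc$-factor.
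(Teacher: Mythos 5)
Your proposal is correct, and it takes a genuinely different route from the paper's. The paper computes $\Delta_j$ of $\|\Delta_i^j\vecf\|_{\mathbb{R}^n}-\|\Delta_{i+1}^{j+1}\vecf\|_{\mathbb{R}^n}$ term by term using the identity $\Delta_j\|\Delta_i^j\vecf\|_{\mathbb{R}^n}=A_j\Delta_i^j\vecf\cdot\Delta_j\vecf\,/\,A_j\|\Delta_i^j\vecf\|_{\mathbb{R}^n}$ (and its $(i{+}1,j{+}1)$ analogue), telescopes the resulting difference of two quotients into three terms, applies $A_i$, and bounds each of the three pieces directly from Corollaries \ref{cor1} and \ref{cor2}, Lemma \ref{estimate_Delta(i+1)-Deltai}, and Lemma \ref{lem3}. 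You instead apply the same elementary identity $\|\veca\|_{\mathbb{R}^n}-\|\vecb\|_{\mathbb{R}^n}=\langle\veca-\vecb,\veca+\vecb\rangle_{\mathbb{R}^n}/(\|\veca\|_{\mathbb{R}^n}+\|\vecb\|_{\mathbb{R}^n})$ at a different step, namely to $u_{ij}$ itself, giving $u_{ij}=\langle\vecd_{ij},\vecc_{ij}\rangle_{\mathbb{R}^n}/S_{ij}$; you then expose the $K$-controlled factor $\vecd_{ij}$ from the outset, use the additive separability of $\vecd_{ij}$ and $\vecc_{ij}$ (so that the mixed difference $\Delta_i\Delta_j$ annihilates rather than contributes, a possibility that does arise because the quadratic Leibniz rule for $A_i$ generates $\Delta_i$'s), and close by a power count. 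What your reorganization buys is a structural explanation of why the estimate holds: every surviving Leibniz summand necessarily retains a $\vecd$-factor, hence a $K$, while the differencing forces the extra $L/m$; you also correctly isolate the degenerate case $|i-j|=1$, where the stencil of $\Delta_jA_i$ touches the diagonal $S=0$, and reduce it to Corollary \ref{cor2}. The paper arrives at the same ingredients — its first telescoping term has numerator $\{A_j(\Delta_i^j\vecf-\Delta_{i+1}^{j+1}\vecf)\}\cdot\Delta_j\vecf=A_j\vecd_{ij}\cdot\Delta_j\vecf$ — but displays them as explicit algebra rather than as a systematic expansion. The caveat on your side is that the full expansion is not written out (you flag the bookkeeping yourself as the main obstacle), but each class of factors you identify is correctly bounded by the paper's preparatory results, and the count closes in every case, including those where the lone $\Delta_j$, or a Leibniz-generated $\Delta_i$, lands on $S_{ij}^{-1}$.
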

\begin{proof}
We have
\begin{align*}
	&
	\Delta_j A_i
	\left(
	\| \Delta_i^j \vecf \|_{ \mathbb{R}^n }
	-
	\| \Delta_{ i+1 }^{ j+1 } \vecf \|_{ \mathbb{R}^n }
	\right)
	\\
	& \quad
	=
	A_i
	\left[
	\frac {
	\left\{
	A_j
	\left(
	\Delta_i^j \vecf	- \Delta_{ i+1 }^{ j+1 } \vecf
	\right)
	\right\}
	\cdot \Delta_j \vecf }
	{ A_j \| \Delta_i^j \vecf \|_{ \mathbb{R}^n } }
	+
	\frac {
	A_j \Delta_{ i+1 }^{ j+1 } \vecf \cdot
	\left( \Delta_j \vecf - \Delta_{ j+1 } \vecf \right)
	}
	{ A_j \| \Delta_i^j \vecf \|_{ \mathbb{R}^n } }
	\right.
	\\
	& \quad \qquad \qquad
	\left.
	+ \,
	\frac {
	\left\{
	A_j
	\left(
	\| \Delta_{ i+1 }^{ j+1 } \vecf \|_{ \mathbb{R}^n }
	-
	\| \Delta_i^j \vecf \|_{ \mathbb{R}^n }
	\right)
	\right\}
	A_j \Delta_{ i+1 }^{ j+1 } \vecf \cdot \Delta_{ j+1 } \vecf
	}
	{
	\left( A_j \| \Delta_i^j \vecf \|_{ \mathbb{R}^n } \right)
	\left( A_j \| \Delta_{ i+1 }^{ j+1 } \vecf \|_{ \mathbb{R}^n } \right)
	}
	\right]
	.
\end{align*}
We want to estimate each term.
\par
It holds that
\[
	\Delta_i A_j
	\| \Delta_i^j \vecf \|_{ \mathbb{R}^n }
	=
	A_j \left(
	\frac { A_i \Delta_i^j \vecf \cdot \Delta_i \vecf }
	{ A_i \| \Delta_i \vecf \|_{ \mathbb{R}^n } }
	\right)
	=
	A_j \left(
	\frac { A_i \Delta_i^j \vecf }
	{ A_i \| \Delta_i \vecf \|_{ \mathbb{R}^n } }
	\right)
	\cdot \Delta_i \vecf
	.
\]
Since
\[
	\left\|
	\frac { A_i \Delta_i^j \vecf }
	{ A_i \| \Delta_i \vecf \|_{ \mathbb{R}^n } }
	\right\|_{ \mathbb{R}^n }
	\leqq
	\frac {
	\| \Delta_i^j \vecf + \Delta_{ i+1 }^j \vecf \|_{ \mathbb{R}^n }
	}
	{
	\| \Delta_i^j \vecf \|_{ \mathbb{R}^n }
	+
	\| \Delta_{ i+1 }^j \vecf \|_{ \mathbb{R}^n }
	}
	\leqq
	1
	,
\]
we get
\[
	\left|
	\Delta_i A_j
	\| \Delta_i^j \vecf \|_{ \mathbb{R}^n }
	\right|
	\leqq
	\| \Delta_i \vecf \|_{ \mathbb{R}^n }
	.
\]
Corollary \ref{cor1} yields
\[
	\left|
	\| \Delta_{ i+1 }^{ j+1 } \vecf - \Delta_i^j \vecf \|_{ \mathbb{R}^n }
	\right|
	\leqq
	\frac { CL | i - j |} m
	K \left( \dot { \vecf } , \frac { 2 L } { L_2 m } \right)
	,
\]
and hence
\[
	\left\|
	A_j
	\left(
	\Delta_{ i+1 }^{ j+1 } \vecf - \Delta_i^j \vecf
	\right)
	\right\|_{ \mathbb{R}^n }
	\leqq
	\frac { CL | i - j | + \frac 12} m
	K \left( \dot { \vecf } , \frac { 2 L } { L_2 m } \right)
	\leqq
	\frac { CL | i - j | } m
	K \left( \dot { \vecf } , \frac { 2 L } { L_2 m } \right)
	.
\]
Here we use $ \frac 12 < | i - j | $,
which is obvious from $ | i - j | \geqq 1 $.
Consequently we obtain
\begin{align*}
	&
	\left\|
	A_i
	\left[
	\frac {
	\left\{
	A_j
	\left(
	\Delta_i^j \vecf	- \Delta_{ i+1 }^{ j+1 } \vecf
	\right)
	\right\}
	\cdot \Delta_j \vecf }
	{ A_j \| \Delta_i^j \vecf \|_{ \mathbb{R}^n } }
	\right]
	\right\|_{ \mathbb{R}^n }
	\\
	& \quad
	\leqq
	A_i
	\left(
	\frac { CL | i - j | } m
	K \left( \dot { \vecf } , \frac { 2 L } { L_2 m } \right)
	\frac 1 { A_j \| \Delta_i^j \vecf \|_{ \mathbb{R}^n } }
	\right)
	\| \Delta_j \vecf \|_{ \mathbb{R}^n }
	\\
	& \quad
	\leqq
	\frac { C L^2 } { m^2 }
	K \left( \dot { \vecf } , \frac { 2 L } { L_2 m } \right)
	A_i \left(
	\frac { | i - j | } { A_j \| \Delta_i^j \vecf \|_{ \mathbb{R}^n } }
	\right)
	.
\end{align*}
Lemma \ref{lem3} shows
\begin{align*}
	A_i \left(
	\frac { | i - j | } { A_j \| \Delta_i^j \vecf \|_{ \mathbb{R}^n } }
	\right)
	\leqq 
	\frac { L_2 m ( | i - j | + \frac 12 ) } { L L_1 ( | i - j | - \frac 12 ) }
	\leqq 
	\frac { C m } L
	.
\end{align*}
Therefore we have
\begin{align*}
	\left\|
	A_i
	\left[
	\frac {
	\left\{
	A_j
	\left(
	\Delta_i^j \vecf	- \Delta_{ i+1 }^{ j+1 } \vecf
	\right)
	\right\}
	\cdot \Delta_j \vecf }
	{ A_j \| \Delta_i^j \vecf \|_{ \mathbb{R}^n } }
	\right]
	\right\|_{ \mathbb{R}^n }
	\leqq
	\frac { C L } m
	K \left( \dot { \vecf } , \frac { 2 L } { L_2 m } \right)
	.
\end{align*}
\par
Since
\[
	\| \Delta_{ i+1 }^{ j+1 } \vecf \|_{ \mathbb{R}^n } \leqq \frac { L | i - j | } m
	,
	\quad
	\| \Delta_{ i+1 }^{ j+2 } \vecf \|_{ \mathbb{R}^n } \leqq \frac { L ( | i - j | + 1 ) } m
	\leqq
	\frac { 2 L | i - j | } m
\]
for $ | i - j | \leqq [ \frac m2 ] $,
we have
\[
	A_j \| \Delta_{ i+1 }^{ j+1 } \vecf \|_{ \mathbb{R}^n } \leqq \frac { C L | i - j | } m .
\]
By Lemma \ref{estimate_Delta(i+1)-Deltai} we get
\[
	\| \Delta_j \vecf - 	\Delta_{ j+1 } \vecf \|_{ \mathbb{R}^n }
	\leqq
	\frac { CL } m
	K \left( \dot { \vecf } , \frac { 2 L } { L_2 m } \right) .
\]
Consequently it holds that
\begin{align*}
	&
	\left\|
	A_i
	\left(
	\frac {
	A_j \Delta_{ i+1 }^{ j+1 } \vecf \cdot
	\left( \Delta_j \vecf - \Delta_{ j+1 } \vecf \right)
	}
	{ A_j \| \Delta_i^j \vecf \|_{ \mathbb{R}^n } }
	\right)
	\right\|_{ \mathbb{R}^n }
	\\
	& \quad
	\leqq
	A_i \left(
	\frac { C L | i - j | } m
	\frac { CL } m
	K \left( \dot { \vecf } , \frac { 2 L } { L_2 m } \right)
	\frac 1 { A_j \| \Delta_i^j \vecf \|_{ \mathbb{R}^n } }
	\right)
	\\
	& \quad
	\leqq
	\frac { C L } m
	K \left( \dot { \vecf } , \frac { 2 L } { L_2 m } \right)
	.
\end{align*}
\par
The estimate
\begin{align*}
	&
	\left\|
	A_i \left[
	\frac {
	\left\{
	A_j
	\left(
	\| \Delta_{ i+1 }^{ j+1 } \vecf \|_{ \mathbb{R}^n }
	-
	\| \Delta_i^j \vecf \|_{ \mathbb{R}^n }
	\right)
	\right\}
	A_j \Delta_{ i+1 }^{ j+1 } \vecf \cdot \Delta_{ j+1 } \vecf
	}
	{
	\left( A_j \| \Delta_i^j \vecf \|_{ \mathbb{R}^n } \right)
	\left( A_j \| \Delta_{ i+1 }^{ j+1 } \vecf \|_{ \mathbb{R}^n } \right)
	}
	\right]
	\right\|_{ \mathbb{R}^n }
	\\
	& \quad
	\leqq
	A_i
	\left(
	\left\{
	\frac { CL | i - j | } m
	K \left( \dot { \vecf } , \frac { 2 L } { L_2 m } \right)
	\right\}
	\frac
	{
	\| A_j \Delta_{ i+1 }^{ j+1 } \vecf \|_{ \mathbb{R}^n }
	\| \Delta_{ j+1 } \vecf \|_{ \mathbb{R}^n }
	}
	{
	\left( A_j \| \Delta_i^j \vecf \|_{ \mathbb{R}^n } \right)
	\left( A_j \| \Delta_{ i+i }^{ j+1 } \vecf \|_{ \mathbb{R}^n } \right)
	}
	\right)
	\\
	& \quad
	\leqq
	\frac { CL^3 } { m^3 }
	K \left( \dot { \vecf } , \frac { 2 L } { L_2 m } \right)
	A_i \left(
	\frac { | i - j |^2 }
	{
	\left( A_j \| \Delta_i^j \vecf \|_{ \mathbb{R}^n } \right)
	\left( A_j \| \Delta_{ i+i }^{ j+1 } \vecf \|_{ \mathbb{R}^n } \right)
	}
	\right)
\end{align*}
can be shown similarly.
Hence we have
\begin{align*}
	&
	A_i \left(
	\frac { | i - j |^2 }
	{
	\left( A_j \| \Delta_i^j \vecf \|_{ \mathbb{R}^n } \right)
	\left( A_j \| \Delta_{ i+1 }^{ j+1 } \vecf \|_{ \mathbb{R}^n } \right)
	}
	\right)
	\\
	& \quad
	\leqq
	\left[
	\left\{ \frac { m | i - j | } { L ( | i - j | - \frac 12 ) } \right\}^2
	+
	\left\{ \frac { m | i - j | } { L ( | i - j | - \frac 12 ) } \right\}^2
	\right]
	\leqq
	C \left( \frac mL \right)^2
	.
\end{align*}
From this we arrive at
\begin{align*}
	&
	\left\|
	A_i \left[
	\frac
	{
	\left\{
	A_{ij}
	\left(
	\| \Delta_{ i+1 }^{ j+1 } \vecf \|_{ \mathbb{R}^n }
	-
	\| \Delta_i^j \vecf \|_{ \mathbb{R}^n }
	\right)
	\right\}
	\left( \Delta_i A_j \| \Delta_{ i+1 }^{ j+1 } \vecf \|_{ \mathbb{R}^n } \right)
	\left( \Delta_j A_i \| \Delta_{ i+1 }^{ j+1 } \vecf \|_{ \mathbb{R}^n } \right)
	}
	{
	\left( A_{ij} \| \Delta_i^j \vecf \|_{ \mathbb{R}^n } \right)
	\left( A_{ij} \| \Delta_{ i+i }^{ j+1 } \vecf \|_{ \mathbb{R}^n } \right)
	}
	\right]
	\right\|_{ \mathbb{R}^n }
	\\
	& \quad
	\leqq
	\frac { CL } m
	K \left( \dot { \vecf } , \frac { 2 L } { L_2 m } \right)
	.
\end{align*}
\qed
\end{proof}
\begin{lem}
Suppose that $ \vecf \in H^{ \frac 32 } ( \mathbb{R} / \mathbb{Z} ) $,
$ \vectau \in H^{ \frac 12 } ( \mathbb{R} / \mathbb{Z} ) $,
{\rm \pref{bi-Lipschitz}},
and {\rm \pref{equilateral}}.
Furthermore we assume
\[
	\lim_{ \epsilon \to + 0 }
	\epsilon^{-1} K ( \dot { \vecf } , \epsilon )^2
	= 0
	.
\]
Then we have
\[
	\lim_{ m \to \infty } \sum \mathrm{I}_{21} =
	\lim_{ m \to \infty } \sum \mathrm{I}_{22} = 0 .
\]
\end{lem}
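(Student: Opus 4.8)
The plan is to bound each of $\mathrm{I}_{21}$ and $\mathrm{I}_{22}$ pointwise by a constant multiple of $|i-j|^{-2}\,K\bigl(\dot{\vecf},\frac{2L}{L_2 m}\bigr)^2$ and then to sum. As observed just before the statement, under \pref{equilateral} we may restrict to $1\leqq |i-j|\leqq[\frac m2]$, so for large $m$ we have at our disposal the lower bound $g_{ij,n}\geqq\bigl(\frac{LL_1|i-j|}{2L_2 m}\bigr)^2$, the lower bounds of Lemma \ref{lem3} on the $A_i$- and $A_{ij}$-averages of $\|\Delta_i^j\vecf\|_{\mathbb{R}^n}$ and $\|\Delta_i^{j+1}\vecf\|_{\mathbb{R}^n}$, and the upper bound $|\Delta_i A_j\|\Delta_i^j\vecf\|_{\mathbb{R}^n}|\leqq\|\Delta_i\vecf\|_{\mathbb{R}^n}=\frac{L_m}{m}\leqq\frac Lm$ (and its analogues) established inside the proof of the preceding Lemma.

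For $\mathrm{I}_{22}$ this is immediate. The preceding Lemma gives $\bigl|\Delta_j A_i(\|\Delta_i^j\vecf\|_{\mathbb{R}^n}-\|\Delta_{i+1}^{j+1}\vecf\|_{\mathbb{R}^n})\bigr|\leqq\frac{CL}{m}K\bigl(\dot{\vecf},\frac{2L}{L_2 m}\bigr)$, and by the symmetry $i\leftrightarrow j$ the same bound holds for $\bigl|\Delta_i A_j(\|\Delta_i^j\vecf\|_{\mathbb{R}^n}-\|\Delta_{i+1}^{j+1}\vecf\|_{\mathbb{R}^n})\bigr|$. Dividing the product of these two by the lower bound for $g_{ij,n}$ yields $|\mathrm{I}_{22}|\leqq\frac{C}{|i-j|^2}K\bigl(\dot{\vecf},\frac{2L}{L_2 m}\bigr)^2$ directly.

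For $\mathrm{I}_{21}$ I would first treat the quantity in braces as a difference $\frac{\alpha\beta}{\gamma}-\frac{\alpha'\beta'}{\gamma'}$, where $\alpha=\Delta_i A_j\|\Delta_i^j\vecf\|_{\mathbb{R}^n}$, $\beta=\Delta_j A_i\|\Delta_i^j\vecf\|_{\mathbb{R}^n}$, $\gamma=A_{ij}\|\Delta_i^j\vecf\|_{\mathbb{R}^n}$, and $\alpha',\beta',\gamma'$ are the same quantities with $\Delta_i^j\vecf$ replaced by $\Delta_{i+1}^{j+1}\vecf$, and then rewrite it in the telescoping form $\frac{(\alpha-\alpha')\beta}{\gamma}+\frac{\alpha'(\beta-\beta')}{\gamma}+\alpha'\beta'\frac{\gamma'-\gamma}{\gamma\gamma'}$. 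Here $\alpha-\alpha'$ and $\beta-\beta'$ are controlled by the preceding Lemma (both $\leqq\frac{CL}{m}K$), while $\gamma-\gamma'=A_{ij}(\|\Delta_i^j\vecf\|_{\mathbb{R}^n}-\|\Delta_{i+1}^{j+1}\vecf\|_{\mathbb{R}^n})$ is controlled by Corollary \ref{cor2} ($\leqq\frac{CL|i-j|}{m}K$); combined with $|\alpha|,|\beta|,|\alpha'|,|\beta'|\leqq\frac Lm$ and $\gamma,\gamma'\geqq\frac{cL|i-j|}{m}$ from Lemma \ref{lem3}, each of the three telescoping terms is $\leqq\frac{CLK}{m|i-j|}$, so the braces is $\leqq\frac{CLK}{m|i-j|}$. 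Meanwhile the prefactor $\frac{A_{ij}(\|\Delta_{i+1}^{j+1}\vecf\|_{\mathbb{R}^n}-\|\Delta_i^j\vecf\|_{\mathbb{R}^n})}{g_{ij,n}}$ is $\leqq\frac{CmK}{L|i-j|}$ by Corollary \ref{cor2} and the lower bound on $g_{ij,n}$; multiplying gives $|\mathrm{I}_{21}|\leqq\frac{C}{|i-j|^2}K\bigl(\dot{\vecf},\frac{2L}{L_2 m}\bigr)^2$ as well. Summing either bound over $1\leqq|i-j|\leqq[\frac m2]$ and over the $m$ choices of $i$, and using $\sum_{k\geqq1}k^{-2}<\infty$, one obtains $\sum|\mathrm{I}_{2\ell}|\leqq Cm\,K\bigl(\dot{\vecf},\frac{2L}{L_2 m}\bigr)^2$ for $\ell=1,2$; since the hypothesis $\lim_{\epsilon\to+0}\epsilon^{-1}K(\dot{\vecf},\epsilon)^2=0$ applied with $\epsilon=\frac{2L}{L_2 m}\to0$ says exactly that $m\,K\bigl(\dot{\vecf},\frac{2L}{L_2 m}\bigr)^2\to0$, both sums tend to $0$.

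I expect the only genuine work to be the bookkeeping in $\mathrm{I}_{21}$: one must make sure the braces really produces the extra factor $|i-j|^{-1}$ — which arises from the cancellation of the "leading" parts of $\frac{\alpha\beta}{\gamma}$ and $\frac{\alpha'\beta'}{\gamma'}$ recorded by the telescoping decomposition — so that its product with the prefactor of size $O\bigl(\frac{m}{L|i-j|}\bigr)$ is summable. The $\mathrm{I}_{22}$ estimate and the final summation are then routine.
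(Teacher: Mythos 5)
Your proposal is correct and follows essentially the same route as the paper. The paper also rewrites the braces of $\mathrm{I}_{21}$ via the same three-term telescoping identity (two terms with a $\Delta_iA_j$ or $\Delta_jA_i$ of the difference $\|\Delta_i^j\vecf\|-\|\Delta_{i+1}^{j+1}\vecf\|$, and one with an $A_{ij}$ of that difference), bounds each piece by the preceding lemma together with Lemma~\ref{lem3} and Corollary~\ref{cor2}, and treats $\mathrm{I}_{22}$ as a direct quotient of two factors each of size $O\bigl(\tfrac{L}{m}K\bigr)$ over $g_{ij,n}$; the final summation and appeal to $m\,K(\dot{\vecf},\tfrac{2L}{L_2m})^2\to0$ are identical (the paper records a slightly faster pointwise decay $(|i-j|-\tfrac14)^{-3}$ for $\mathrm{I}_{21}$, but any rate $O(|i-j|^{-2})$ or better suffices).
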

\begin{proof}
Terms in braces of $ \mathrm{I}_{21} $ can be rewritten as
\begin{align*}
	&
	\frac {
	\left( \Delta_i A_j \| \Delta_i^j \vecf \|_{ \mathbb{R}^n } \right)
	\left( \Delta_j A_i \| \Delta_i^j \vecf \|_{ \mathbb{R}^n } \right)
	}
	{ A_{ij} \| \Delta_i^j \vecf \|_{ \mathbb{R}^n } }
	-
	\frac
	{
	\left( \Delta_i A_j \| \Delta_{ i+1 }^{ j+1 } \vecf \|_{ \mathbb{R}^n } \right)
	\left( \Delta_j A_i \| \Delta_{ i+1 }^{ j+1 } \vecf \|_{ \mathbb{R}^n } \right)
	}
	{ A_{ij} \| \Delta_{ i+i }^{ j+1 } \vecf \|_{ \mathbb{R}^n } }
	\\
	& \quad
	=
	\frac {
	\left( \Delta_i A_j \| \Delta_i^j \vecf \|_{ \mathbb{R}^n } \right)
	\left\{
	\Delta_j A_i
	\left(
	\| \Delta_i^j \vecf \|_{ \mathbb{R}^n }
	-
	\| \Delta_{ i+1 }^{ j+1 } \vecf \|_{ \mathbb{R}^n }
	\right)
	\right\}
	}
	{ A_{ij} \| \Delta_i^j \vecf \|_{ \mathbb{R}^n } }
	\\
	& \quad \qquad
	+ \,
	\frac
	{
	\left\{
	\Delta_i A_j
	\left(
	\| \Delta_i^j \vecf \|_{ \mathbb{R}^n }
	-
	\| \Delta_{ i+1 }^{ j+1 } \vecf \|_{ \mathbb{R}^n }
	\right)
	\right\}
	\left( \Delta_j A_i \| \Delta_{ i+1 }^{ j+1 } \vecf \|_{ \mathbb{R}^n } \right)
	}
	{ A_{ij} \| \Delta_i^j \vecf \|_{ \mathbb{R}^n } }
	\\
	& \quad \qquad
	+ \,
	\frac
	{
	\left\{
	A_{ij}
	\left(
	\| \Delta_{ i+1 }^{ j+1 } \vecf \|_{ \mathbb{R}^n }
	-
	\| \Delta_i^j \vecf \|_{ \mathbb{R}^n }
	\right)
	\right\}
	\left( \Delta_i A_j \| \Delta_{ i+1 }^{ j+1 } \vecf \|_{ \mathbb{R}^n } \right)
	\left( \Delta_j A_i \| \Delta_{ i+1 }^{ j+1 } \vecf \|_{ \mathbb{R}^n } \right)
	}
	{
	\left( A_{ij} \| \Delta_i^j \vecf \|_{ \mathbb{R}^n } \right)
	\left( A_{ij} \| \Delta_{ i+i }^{ j+1 } \vecf \|_{ \mathbb{R}^n } \right)
	}
	.
\end{align*}
\par
It follows from Lammas \ref{estimate_Delta(i+1)-Deltai},
\ref{lem3} and
\[
	\left| \Delta_i A_j \| \Delta_i^j \vecf \|_{ \mathbb{R}^n } \right|
	\leqq
	\| \Delta_i \vecf \|_{ \mathbb{R}^n }
	=
	\frac Lm
\]
that
\[
	\left|
	\frac {
	\left( \Delta_i A_j \| \Delta_i^j \vecf \|_{ \mathbb{R}^n } \right)
	\left\{
	\Delta_j A_i
	\left(
	\| \Delta_i^j \vecf \|_{ \mathbb{R}^n }
	-
	\| \Delta_{ i+1 }^{ j+1 } \vecf \|_{ \mathbb{R}^n }
	\right)
	\right\}
	}
	{ A_{ij} \| \Delta_i^j \vecf \|_{ \mathbb{R}^n } }
	\right|
	\leqq
	\frac { CL } { m ( | i - j | - \frac 14 ) }
	K \left( \dot { \vecf } , \frac { 2 L } { L_2 m } \right)
	.
\]
Similarly we have
\[
	\left|
	\frac
	{
	\left\{
	\Delta_i A_j
	\left(
	\| \Delta_i^j \vecf \|_{ \mathbb{R}^n }
	-
	\| \Delta_{ i+1 }^{ j+1 } \vecf \|_{ \mathbb{R}^n }
	\right)
	\right\}
	\left( \Delta_j A_i \| \Delta_{ i+1 }^{ j+1 } \vecf \|_{ \mathbb{R}^n } \right)
	}
	{ A_{ij} \| \Delta_i^j \vecf \|_{ \mathbb{R}^n } }
	\right|
	\leqq
	\frac { CL } { m ( | i - j | - \frac 14 ) }
	K \left( \dot { \vecf } , \frac { 2 L } { L_2 m } \right)
	.
\]
Consequently we have
\begin{align*}
	&
	\left|
	\frac
	{
	\left\{
	A_{ij}
	\left(
	\| \Delta_{ i+1 }^{ j+1 } \vecf \|_{ \mathbb{R}^n }
	-
	\| \Delta_i^j \vecf \|_{ \mathbb{R}^n }
	\right)
	\right\}
	\left( \Delta_i A_j \| \Delta_{ i+1 }^{ j+1 } \vecf \|_{ \mathbb{R}^n } \right)
	\left( \Delta_j A_i \| \Delta_{ i+1 }^{ j+1 } \vecf \|_{ \mathbb{R}^n } \right)
	}
	{
	\left( A_{ij} \| \Delta_i^j \vecf \|_{ \mathbb{R}^n } \right)
	\left( A_{ij} \| \Delta_{ i+i }^{ j+1 } \vecf \|_{ \mathbb{R}^n } \right)
	}
	\right|
	\\
	& \quad
	\leqq
	\frac { C L } { m ( | i - j | - \frac 14 )^2 }
	K \left( \dot { \vecf } , \frac { 2 L } { L_2 m } \right)
	,
\end{align*}
and then
\begin{align*}
	| \mathrm{I}_{21} |
	\leqq & \
	\frac 12
	\frac { \left| A_{ij} \left(
	\| \Delta_{ i+1 }^{ j+1 } \vecf \|_{ \mathbb{R}^n }
	- 
	\| \Delta_i^j \vecf \|_{ \mathbb{R}^n }
	\right) \right| }
	{ g_{ij,n} }
	\frac { C L } { m ( | i - j | - \frac 14 ) }
	K \left( \dot { \vecf } , \frac { 2 L } { L_2 m } \right)
	\\
	\leqq & \
	\left\{
	\frac { C L } m
	K \left( \dot { \vecf } , \frac { 2 L } { L_2 m } \right)
	\right\}^2
	\left( \frac m { L | i - j | } \right)^2
	\frac 1 { ( | i - j | - \frac 14 ) }
	\\
	\leqq & \
	\frac C { ( | i - j | - \frac 14 )^3 }
	K \left( \dot { \vecf } , \frac { 2 L } { L_2 m } \right)^2
	.
\end{align*}
Thus we know
\begin{align*}
	\sum_{ i \ne j } | \mathrm{I}_{21} |
	\leqq & \
	\sum_{ i=1 }^m
	\sum_{ k=1 }^{ [ \frac m2 ] }
	\frac C { ( k - \frac 14 )^3 }
	K \left( \dot { \vecf } , \frac { 2 L } { L_2 m } \right)^2
	\\
	\leqq & \
	C m
	K \left( \dot { \vecf } , \frac { 2 L } { L_2 m } \right)^2
	\to 
	0 \quad ( m \to \infty ) .
\end{align*}
Similarly we have
\begin{align*}
	| \mathrm{I}_{22} |
	\leqq 
	\frac 12
	\frac 1 { g_{ij,n} }
	\left\{
	\frac { C L } m
	K \left( \dot { \vecf } , \frac { 2 L } { L_2 m } \right)
	\right\}^2
	\leqq 
	\frac C { ( | i - j | - \frac 14 )^2 }
	K \left( \vectau , \frac L { L_2 m } \right)^2
	,
\end{align*}
and
\[
	\sum_{ i \ne j } | \mathrm{I}_{22} |
	\to 0
	\quad ( m \to \infty ) .
\]
\qed
\end{proof}
\par
Next we show the convergence of $ \displaystyle{ \sum_{ i \ne j } \mathscr{P}_{ij}^m ( \vecf ) } $ and $ \displaystyle{ \sum_{ i \ne j } \mathscr{R}_{ij}^m ( \vecf ) } $.
Note that we have $ \mathrm{J}_2 = \mathrm{J}_3 = \mathrm{J}_4 = \mathrm{J}_5 = 0 $ under \pref{equilateral},
and hence
\[
	\mathscr{P}_{ij}^m ( \vecf )
	= \mathrm{J}_{11} ,
	\quad
	\mathscr{R}_{ij}^m ( \vecf )
	= \mathrm{J}_{12}
	+ \mathrm{J}_{13}
	+ \mathrm{J}_{14}
	+ \mathrm{J}_{15}
	.
\]
\begin{lem}
Assume that $ \vecf \in W^{ 2 , \infty } ( \mathbb{R} / \mathbb{Z} ) $ with {\rm \pref{bi-Lipschitz}},
and
{\rm \pref{equilateral}}.
Then it holds that
\[
	\lim_{ m \to \infty }
	\sum_{ i \ne j } \mathscr{P}_{ij}^m ( \vecf )
	=
	\mathcal{E} ( \vecf ) - 4
	,
	\quad
	\lim_{ m \to \infty }
	\sum_{ i \ne j } \mathscr{R}_{ij}^m ( \vecf )
	=
	0
	.
\]
\end{lem}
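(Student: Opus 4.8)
Since \pref{equilateral} forces $\mathrm{J}_2=\mathrm{J}_3=\mathrm{J}_4=\mathrm{J}_5=0$, so that $\mathscr{P}_{ij}^m(\vecf)=\mathrm{J}_{11}$ and $\mathscr{R}_{ij}^m(\vecf)=\mathrm{J}_{12}+\mathrm{J}_{13}+\mathrm{J}_{14}$, the statement splits into the two limits $\sum_{i\ne j}\mathrm{J}_{11}\to\mathcal{E}(\vecf)-4$ and $\sum_{i\ne j}(\mathrm{J}_{12}+\mathrm{J}_{13}+\mathrm{J}_{14})\to0$. I would prove the first in the spirit of Lemma \ref{lem4}, and the second in the spirit of the estimates already carried out for $\mathrm{I}_{12}$, $\mathrm{I}_{21}$, $\mathrm{I}_{22}$.

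For the first limit, I would start from the representation \pref{J_11} and rewrite $\sum_{i\ne j}\mathrm{J}_{11}$ as the integral over $(\mathbb{R}/\mathbb{Z})^2$ of $\sum_{i\ne j}|\Delta_i\theta|^{-1}|\Delta_j\theta|^{-1}\mathrm{J}_{11}\,\chi_{ij}$, where $\chi_{ij}$ is the characteristic function of $I_i\times I_j$. For a.e.\ $(\vartheta_1,\vartheta_2)$ the unit edge vectors $\Delta_i^j\vecf/\|\Delta_i^j\vecf\|_{\mathbb{R}^n}$, $\Delta_i\vecf/\|\Delta_i\vecf\|_{\mathbb{R}^n}$, $\Delta_{i+1}\vecf/\|\Delta_{i+1}\vecf\|_{\mathbb{R}^n}$ tend to $\vectau(\vartheta_1)$ and the $j$-indexed ones to $\vectau(\vartheta_2)$, so the bracket in \pref{J_11} tends to $\|\vecn\wedge(\vectau(\vartheta_1)+\vectau(\vartheta_2))\|_{\wedge^2\mathbb{R}^n}^2+(\vecn\cdot(\vectau(\vartheta_1)-\vectau(\vartheta_2)))^2$ with $\vecn=(\vecf(\vartheta_1)-\vecf(\vartheta_2))/\|\vecf(\vartheta_1)-\vecf(\vartheta_2)\|_{\mathbb{R}^n}$; using $\|\vecn\wedge\vecv\|_{\wedge^2\mathbb{R}^n}^2=\|\vecv\|_{\mathbb{R}^n}^2-(\vecn\cdot\vecv)^2$ one checks that one half of this equals $1-\cos\varphi(\vartheta_1,\vartheta_2)$, so the integrand converges a.e.\ to the density of $\mathcal{E}(\vecf)-4$ furnished by the cosine formula. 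For the domination I would show that the bracket in \pref{J_11} is $\le C\|\Delta_i^j\vecf\|_{\mathbb{R}^n}^2$: since $\dot\vecf$ is Lipschitz, Corollary \ref{cor1} together with \pref{bi-Lipschitz} shows that each of the three edge vectors above differs from one common unit vector by $O(\|\Delta_i^j\vecf\|_{\mathbb{R}^n})$, whence both the wedge term and the scalar-product term are $O(\|\Delta_i^j\vecf\|_{\mathbb{R}^n}^2)$; combined with $\|\Delta_i\vecf\|_{\mathbb{R}^n}\|\Delta_j\vecf\|_{\mathbb{R}^n}/(|\Delta_i\theta||\Delta_j\theta|)\le L_2^2$ and $\bar g_{ij,n}=\|\Delta_i^j\vecf\|_{\mathbb{R}^n}^2$ this makes the integrand uniformly bounded, and Lebesgue's dominated convergence theorem gives $\sum_{i\ne j}\mathscr{P}_{ij}^m(\vecf)\to\mathcal{E}(\vecf)-4$.

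For the second limit I would estimate $\mathrm{J}_{12}$, $\mathrm{J}_{13}$, $\mathrm{J}_{14}$ after reducing, as usual, to $1\le|i-j|\le[m/2]$, where $g_{ij,d}=(L_m/m)^2$ and $g_{ij,n},\bar g_{ij,n}\ge(c|i-j|/m)^2$ for a constant $c>0$ by Lemma \ref{lem3}. Into $\mathrm{J}_{13}$ and $\mathrm{J}_{14}$ I would first substitute the identities of \S 4.2, namely $\Delta_i\Delta_j\bar g_{ij,n}=-2\Delta_i\vecf\cdot\Delta_j\vecf$, the explicit expansion of $\Delta_i\Delta_jg_{ij,n}$, and $\Delta_i\|\Delta_i^j\vecf\|_{\mathbb{R}^n}=-A_i\Delta_i^j\vecf\cdot\Delta_i\vecf/A_i\|\Delta_i^j\vecf\|_{\mathbb{R}^n}$ with its companions; after this each of $\mathrm{J}_{12}$, $\mathrm{J}_{13}$, $\mathrm{J}_{14}$ becomes a difference in which $g_{ij,n}$ is replaced by $\bar g_{ij,n}$ and $\Delta_i\log g_{ij,n}$ by $-2\Delta_i^j\vecf\cdot\Delta_i\vecf/\bar g_{ij,n}$. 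The relevant small quantities are the ``edge defects'' $A_{ij}(\|\Delta_i^j\vecf\|_{\mathbb{R}^n}-\|\Delta_{i+1}^{j+1}\vecf\|_{\mathbb{R}^n})$, $\Delta_iA_j(\|\Delta_i^j\vecf\|_{\mathbb{R}^n}-\|\Delta_{i+1}^{j+1}\vecf\|_{\mathbb{R}^n})$ and $\Delta_i\vecf/\|\Delta_i\vecf\|_{\mathbb{R}^n}-\Delta_{i+1}\vecf/\|\Delta_{i+1}\vecf\|_{\mathbb{R}^n}$, controlled by Corollaries \ref{cor1}, \ref{cor2} and the lemma preceding as $O(|i-j|m^{-1}K)$, respectively $O(m^{-1}K)$, with $K=K(\dot\vecf,2L/(L_2m))$. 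The target is a bound $|\mathrm{J}_{12}|+|\mathrm{J}_{13}|+|\mathrm{J}_{14}|\le C(|i-j|-\tfrac14)^{-2}K^2$, since then $\sum_{i=1}^m\sum_{k=1}^{[m/2]}C(k-\tfrac14)^{-2}K^2=CmK^2\to0$ (here $\dot\vecf$ Lipschitz gives $K\le C/m$, which is more than the hypothesis $\epsilon^{-1}K(\dot\vecf,\epsilon)^2\to0$).

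The main obstacle is exactly this last bound. A term-by-term estimate of $\mathrm{J}_{12}$ (and likewise of $\mathrm{J}_{13}$, $\mathrm{J}_{14}$) exposes only a single factor of an edge defect, i.e.\ $|\mathrm{J}_{12}|\le CK|i-j|^{-2}$, which gives merely a bounded sum $CmK$ rather than a vanishing one. To gain the second power of $K$ one must exhibit cancellations: in $\mathrm{J}_{14}$ the $O(|i-j|^{-2})$ pieces of $\frac{(\Delta_ig_{ij,n})(\Delta_jg_{ij,n})}{g_{ij,n}^2}$ and $\frac{4(\Delta_i^j\vecf\cdot\Delta_i\vecf)(\Delta_i^j\vecf\cdot\Delta_j\vecf)}{\bar g_{ij,n}^2}$ cancel to leading order because $\Delta_i\log g_{ij,n}=-2\Delta_i^j\vecf\cdot\Delta_i\vecf/\bar g_{ij,n}+O(m^{-1}K)$, and the $g_{ij,n}$-versus-$\bar g_{ij,n}$ discrepancies in $\mathrm{J}_{12}$ and $\mathrm{J}_{13}$ (using the explicit $\Delta_i\Delta_jg_{ij,n}$ expansion) combine so that every surviving term is a product of two difference-type factors, each $O(m^{-1}K)$ or $O(K)$ up to harmless powers of $|i-j|$. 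This is the analogue, at the level of chord lengths rather than unit tangents, of the identity $\veca\cdot\vecb-\vecc\cdot\vecd=-\tfrac12\{(\veca-\vecc)-(\vecb-\vecd)\}\cdot(\veca-\vecb+\vecc-\vecd)$ used for $\mathrm{I}_{12}$; once it is in place, the summation above closes the argument, and the convergence of $\mathcal{E}_2^m$ follows as a corollary of the two limits just established.
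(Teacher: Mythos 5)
Your treatment of the first limit, $\sum_{i\ne j}\mathscr{P}_{ij}^m\to\mathcal{E}(\vecf)-4$, is exactly the paper's: rewrite the sum as an integral of a step function, identify the a.e.\ limit as the cosine-formula density (the paper derives the same limit from the representation \pref{J_11}), and dominate via $|\mathrm{J}_{11}|\le C\,K(\dot\vecf,2L/(L_2m))^2$ together with the bi-Lipschitz bound; no issues there.

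For the second limit your route genuinely diverges from the paper's. The paper bounds each of $\mathrm{J}_{12}$, $\mathrm{J}_{131}$, $\mathrm{J}_{132}$, $\mathrm{J}_{141}$, $\mathrm{J}_{142}$, $\mathrm{J}_{143}$ by $CK/|i-j|^2$, treats the two pieces of $\mathrm{J}_{144}$ separately ($Cm\log m\,K^2$ and $CmK$), and then passes to the limit by Lebesgue's theorem on the integral of the associated step functions, with the remark that $W^{2,\infty}$ (hence the Lipschitz bound $K\le C/m$) keeps $mK$ uniformly bounded. You instead aim at a term-wise bound $|\mathrm{J}_{12}|+|\mathrm{J}_{13}|+|\mathrm{J}_{14}|\le C(|i-j|-\tfrac14)^{-2}K^2$ so that the direct summation $\sum_{i=1}^m\sum_{k=1}^{[m/2]}Ck^{-2}K^2=CmK^2$ closes the argument. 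Your observation that the naive summation $\sum CK/|i-j|^2\asymp CmK$ is only bounded, not obviously vanishing, is fair as a critique and is the reason the paper must lean on a pointwise-plus-domination passage rather than a raw sum estimate.

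However, the specific target you set yourself cannot be reached. The bound you want carries absolute values on each $\mathrm{J}_{1\ell}$ individually, yet each of these is \emph{genuinely} of first order in the edge defect: e.g.\ $\mathrm{J}_{12}=g_{ij,d}\bigl(1/g_{ij,n}-1/\bar g_{ij,n}\bigr)$ is a single difference quotient $g_{ij,d}\bigl(\|\Delta_i^j\vecf\|-\|\Delta_{i+1}^{j+1}\vecf\|\bigr)/\bigl(\|\Delta_i^j\vecf\|^2\|\Delta_{i+1}^{j+1}\vecf\|\bigr)$, and its numerator is generically comparable to $(|i-j|/m)K$, not $K^2$. There is no internal cancellation inside $\mathrm{J}_{12}$, and the signs of $\mathrm{J}_{12}$, $\mathrm{J}_{13}$, $\mathrm{J}_{14}$ are independently governed by the sign of $\|\Delta_i^j\vecf\|-\|\Delta_{i+1}^{j+1}\vecf\|$ and its first differences, so one cannot expect $|\mathrm{J}_{12}|+|\mathrm{J}_{13}|+|\mathrm{J}_{14}|$ to drop to order $K^2$ either. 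If one wants a $K^2$ gain, the cancellation has to be extracted at the level of the \emph{sum over indices}: for fixed $k=j-i$, summing $\mathrm{J}_{12}(i,i+k)$ over $i$ along the closed polygon and using discrete integration by parts (the discrete analogue of $\int(1/a)(1/a)'=-\tfrac12\int((1/a)')^2$ on a periodic sequence) turns $\sum_i g_{ij,d}\bigl(1/g_{ij,n}-1/\bar g_{ij,n}\bigr)$ into $-\tfrac12 g_{ij,d}\sum_i\bigl(\Delta_i\|\Delta_i^{i+k}\vecf\|^{-1}\bigr)^2$, which \emph{is} quadratic in the edge defect and gives $O(mK^2/k^2)$, hence a vanishing total. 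That summation-by-parts mechanism — not a pointwise cancellation between the $\mathrm{J}_{1\ell}$ — is the right place to look for the gain you describe, and it is what your ``one must exhibit cancellations'' paragraph would have to actually carry out.
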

\begin{proof}
The proof is based on the argument similar to those of Lemma \ref{lem4}.
It is not difficult to show
\begin{align*}
	&
	\sum_{ i,j }
	\mathscr{P}_{ij}^m ( \vecf )
	\chi_{{}_{ij} } ( \vartheta_1 , \vartheta_2 )
	\\
	& \quad
	\to
	\frac 12
	\left[
	\left\|
	\frac { \vecf ( \vartheta_1 ) - \vecf ( \vartheta_2 ) } { \| \vecf ( \vartheta_1 ) - \vecf ( \vartheta_2 ) \|_{ \mathbb{R}^n } }
	\bigwedge
	( \vectau ( \vartheta_1 ) + \vectau ( \vartheta_2 ) )
	\right\|_{ \wedge^2 \mathbb{R}^n }^2
	\right.
	\\
	& \quad \qquad
	\left.
	+ \,
	\left\{
	\frac { \vecf ( \vartheta_1 ) - \vecf ( \vartheta_2 ) } { \| \vecf ( \vartheta_1 ) - \vecf ( \vartheta_2 ) \|_{ \mathbb{R}^n } }
	\cdot
	\left( \vectau ( \vartheta_1 ) - \vectau ( \vartheta_2 ) \right)
	\right\}^2
	\right]
	\frac { \| \dot { \vecf } ( \vartheta_1 ) \|_{ \mathbb{R}^n } \| \dot { \vecf }( \vartheta_2 ) \|_{ \mathbb{R}^n } }
	{ \| \vecf ( \vartheta_1 ) - \vecf ( \vartheta_2 ) \|_{ \mathbb{R}^n }^2 }
	,
	\\
	&
	\sum_{ i,j }
	\mathscr{R}_{ij}^m ( \vecf )
	\chi_{{}_{ij} } ( \vartheta_i , \vartheta_j )
	\to 0
\end{align*}
as $ m \to \infty $ for a.e.\ $ ( \vartheta_1 , \vartheta_2 ) \in ( \mathbb{R} / \mathbb{Z} )^2 $.
The first one is the energy density of $ \mathcal{E} ( \vecf ) - 4 $.
To apply Lebesgue's theorem we will show uniform boundedness.
Let $ i < j \leqq j + \frac m2 $.
Then
\begin{align*}
	&
	\frac { \Delta_i^j \vecf }
	{ \| \Delta_i^j \vecf \|_{ \mathbb{R}^n } }
	-
	\frac { j - i }
	{ 2 \| \Delta_i^j \vecf \|_{ \mathbb{R}^n } }
	\frac { L_m } m
	\left(
	\frac { \Delta_i \vecf } { \| \Delta_i \vecf \|_{ \mathbb{R}^n } }
	+
	\frac { \Delta_j \vecf } { \| \Delta_j \vecf \|_{ \mathbb{R}^n } }
	\right)
	\\
	& \quad
	=
	\frac 1
	{ \| \Delta_i^j \vecf \|_{ \mathbb{R}^n } }
	\sum_{ k=i }^{ j-1 }
	\left\{
	\Delta_k \vecf
	-
	\frac 12
	\left(
	\Delta_i \vecf + \Delta_j \vecf
	\right)
	\right\}
	\\
	& \quad
	=
	\frac 1
	{ 2 \| \Delta_i^j \vecf \|_{ \mathbb{R}^n } }
	\sum_{ k=i }^{ j-1 }
	\left\{
	\left( \Delta_k \vecf - \Delta_i \vecf \right)
	+
	\left( \Delta_k \vecf - \Delta_j \vecf \right)
	\right\}
	.
\end{align*}
Therefore we have
\begin{align*}
	&
	\left\|
	\frac { \Delta_i^j \vecf }
	{ \| \Delta_i^j \vecf \|_{ \mathbb{R}^n } }
	-
	\frac { j - i }
	{ 2 \| \Delta_i^j \vecf \|_{ \mathbb{R}^n } }
	\frac { L_m } m
	\left(
	\frac { \Delta_i \vecf } { \| \Delta_i \vecf \|_{ \mathbb{R}^n } }
	+
	\frac { \Delta_j \vecf } { \| \Delta_j \vecf \|_{ \mathbb{R}^n } }
	\right)
	\right\|_{ \mathbb{R}^n }
	\\
	& \quad
	\leqq
	\frac C { | i - j | }
	\sum_{ k=i }^{ j-1 }
	\left\{ | k - i | + | k - j | \right\}
	K \left( \dot { \vecf } , \frac { 2 L } { L_2 m } \right)
	\\
	& \quad
	\leqq
	C | i - j | K \left( \dot { \vecf } , \frac { 2 L } { L_2 m } \right)
	.
\end{align*}
A similar estimate holds for $ i - \frac m2 \leqq j < i $ also.
Therefore it holds that
\begin{align*}
	&
	\left\|
	\frac { \Delta_i^j \vecf } { \| \Delta_i^j \vecf \|_{ \mathbb{R}^n } }
	\bigwedge
	\left(
	\frac { \Delta_i \vecf } { \| \Delta_i \vecf \|_{ \mathbb{R}^n } }
	+
	\frac { \Delta_j \vecf } { \| \Delta_j \vecf \|_{ \mathbb{R}^n } }
	\right)
	\right\|_{ \wedge^2 \mathbb{R}^n }^2
	\\
	& \quad
	=
	\left\|
	\left[
	\frac { \Delta_i^j \vecf }
	{ \| \Delta_i^j \vecf \|_{ \mathbb{R}^n } }
	-
	\frac { j - i }
	{ 2 \| \Delta_i^j \vecf \|_{ \mathbb{R}^n } }
	\frac { L_m } m
	\left(
	\frac { \Delta_i \vecf } { \| \Delta_i \vecf \|_{ \mathbb{R}^n } }
	+
	\frac { \Delta_j \vecf } { \| \Delta_j \vecf \|_{ \mathbb{R}^n } }
	\right)
	\right]
	\bigwedge
	\left(
	\frac { \Delta_i \vecf } { \| \Delta_i \vecf \|_{ \mathbb{R}^n } }
	+
	\frac { \Delta_j \vecf } { \| \Delta_j \vecf \|_{ \mathbb{R}^n } }
	\right)
	\right\|_{ \wedge^2 \mathbb{R}^n }^2
	\\
	& \quad
	\leqq
	C | i - j |^2 K \left( \dot { \vecf } , \frac { 2 L } { L_2 m } \right)^2
	.
\end{align*}
On the other hand,
we have
\[
	\left\{
	\frac { \Delta_i^j \vecf } { \| \Delta_i^j \vecf \|_{ \mathbb{R}^n } }
	\cdot
	\left(
	\frac { \Delta_i \vecf } { \| \Delta_i \vecf \|_{ \mathbb{R}^n } }
	-
	\frac { \Delta_j \vecf } { \| \Delta_j \vecf \|_{ \mathbb{R}^n } }
	\right)
	\right\}^2
	\leqq
	C | i - j |^2 K \left( \dot { \vecf } , \frac { 2 L } { L_2 m } \right)^2
	.
\]
Hence the estimate
\[
	| \mathrm{J}_{11} |
	\leqq
	\frac C { | i - j |^2 } | i - j |^2 K \left( \dot { \vecf } , \frac { 2 L } { L_2 m } \right)^2
	=
	C K \left( \dot { \vecf } , \frac { 2 L } { L_2 m } \right)^2
\]
follows from \pref{J_11}.
This implies that we can apply Lebesgue's convergence theorem to
$ \displaystyle{
	\sum_{ i \ne j } \mathrm{J}_{11}
} $,
and we find that it converges to $ \mathcal{E} ( \vecf ) - 4 $ as $ m \to \infty $.
\par
Now we estimate each term of $ \mathscr{R}_{ij}^m ( \vecf ) $.
We begin with
\[
	\frac 1 { g_{ij,n} } - \frac 1 { \bar g_{ij,n} }
	=
	\frac 1 { \| \Delta_i^j \vecf \|_{ \mathbb{R}^n } }
	\left(
	\frac 1 { \| \Delta_{ i+1 }^{ j+1 } \vecf \|_{ \mathbb{R}^n } }
	-
	\frac 1 { \| \Delta_i^j \vecf \|_{ \mathbb{R}^n } }
	\right)
	=
	\frac {
	\| \Delta_i^j \vecf \|_{ \mathbb{R}^n }
	-
	\| \Delta_{ i+1 }^{ j+1 } \vecf \|_{ \mathbb{R}^n }
	}
	{
	\| \Delta_i^j \vecf \|_{ \mathbb{R}^n }^2
	\| \Delta_{ i+1 }^{ j+1 } \vecf \|_{ \mathbb{R}^n }
	}
	.
\]
It follows from the proof of Corollary \ref{cor2} that
\[
	\left|
	\frac 1 { g_{ij,n} } - \frac 1 { \bar g_{ij,n} }
	\right|
	\leqq
	\left( \frac Lm \right)^{ -2 }
	\frac { C | i - j | } { | i - j |^3 } K \left( \dot { \vecf } , \frac { 2 L } { L_2 m } \right)
	=
	\left( \frac Lm \right)^{ -2 }
	\frac C { | i - j |^2 } K \left( \dot { \vecf } , \frac { 2 L } { L_2 m } \right)
	.
\]
Hence we can estimate $ \mathrm{J}_{12} $ as
\[
	| \mathrm{J}_{12} |
	\leqq
	\frac C { | i - j |^2 } K \left( \dot { \vecf } , \frac { 2 L } { L_2 m } \right)
	.
\]
We decompose $ \mathrm{J}_{13} $ into
\begin{align*}
	\mathrm{J}_{13}
	= & \
	\mathrm{J}_{131} + \mathrm{J}_{132}
	,
	\\
	\mathrm{J}_{131}
	= & \
	- \frac 12
	\left(
	\frac 1 { g_{ij,n} } - \frac 1 { \bar g_{ij,n} }
	\right)
	\Delta_i \Delta_j g_{ij,n}
	,
	\\
	\mathrm{J}_{132}
	= & \
	-
	\frac 1 { \bar g_{ij,n} }
	\Delta_i \Delta_j \left( g_{ij,n} - \bar g_{ij,n} \right)
	.
\end{align*}
From
\begin{align*}
	\Delta_i \Delta_j g_{ij,n}
	= & \
	-
	\Delta_i \vecf \cdot \Delta_j \vecf
	-
	\Delta_{ i+1 } \vecf \cdot \Delta_{ j+1 } \vecf
	\\
	& \quad
	-
	\frac {
	A_{ij} \left(
	\| \Delta_{ i+1 }^{ j+1 } \vecf \|_{ \mathbb{R}^n }
	-
	\| \Delta_i^j \vecf \|_{ \mathbb{R}^n }
	\right)
	}
	{
	\left( A_{ij} \| \Delta_i^j \vecf \|_{ \mathbb{R}^n } \right)
	}
	\left( \Delta_i \vecf \cdot \Delta_j \vecf \right)
	\\
	& \quad
	+
	\frac {
	A_{ij} \left(
	\| \Delta_{ i+1 }^{ j+1 } \vecf \|_{ \mathbb{R}^n }
	-
	\| \Delta_i^j \vecf \|_{ \mathbb{R}^n }
	\right)
	}
	{
	\left( A_{ij} \| \Delta_{ i+1 }^{ j+1 } \vecf \|_{ \mathbb{R}^n } \right)
	}
	\left( \Delta_{ i+1 } \vecf \cdot \Delta_{ j+1 } \vecf \right)
	\\
	& \quad
	- \,
	\frac
	{ A_{ij} \left(
	\| \Delta_{ i+1 }^{ j+1 } \vecf \|_{ \mathbb{R}^n }
	-
	\| \Delta_i^j \vecf \|_{ \mathbb{R}^n }
	\right)
	}
	{ A_{ij} \| \Delta_i^j \vecf \|_{ \mathbb{R}^n } }
	\left( \Delta_i A_j \| \Delta_i^j \vecf \|_{ \mathbb{R}^n } \right)
	\left( \Delta_j A_i \| \Delta_i^j \vecf \|_{ \mathbb{R}^n } \right)
	\\
	& \quad
	+ \,
	\frac
	{ A_{ij} \left(
	\| \Delta_{ i+1 }^{ j+1 } \vecf \|_{ \mathbb{R}^n }
	-
	\| \Delta_i^j \vecf \|_{ \mathbb{R}^n }
	\right)
	}
	{ A_{ij} \| \Delta_{ i+1 }^{ j+1 } \vecf \|_{ \mathbb{R}^n } }
	\left( \Delta_i A_j \| \Delta_{ i+1 }^{ j+1 } \vecf \|_{ \mathbb{R}^n } \right)
	\left( \Delta_j A_i \| \Delta_{ i+1 }^{ j+1 } \vecf \|_{ \mathbb{R}^n } \right)
	\\
	& \quad
	- \,
	\left\{
	\Delta_i A_j 
	\left(
	\| \Delta_{ i+1 }^{ j+1 } \vecf \|_{ \mathbb{R}^n }
	-
	\| \Delta_i^j \vecf \|_{ \mathbb{R}^n }
	\right)
	\right\}
	\left\{
	\Delta_j A_i 
	\left( 
	\| \Delta_{ i+1 }^{ j+1 } \vecf \|_{ \mathbb{R}^n }
	-
	\| \Delta_i^j \vecf \|_{ \mathbb{R}^n }
	\right)
	\right\}
\end{align*}
and $ \left| \Delta_i \Delta_j \| \Delta_i^j \vecf \|_{ \mathbb{R}^n } \right| \leqq \frac Lm $ we find
\[
	| \Delta_i \Delta_j g_{ij} | \leqq C \left( \frac Lm \right)^2 .
\]
Hence we can estimate $ \mathrm{J}_{131} $ as
\[
	| \mathrm{J}_{131} |
	\leqq
	\frac C { | i - j |^2 } K \left( \dot { \vecf } , \frac { 2 L } { L_2 m } \right)
	.
\]
Since
\[
	g_{ij,n} - \bar g_{ij,n}
	=
	\| \Delta_i^j \vecf \|_{ \mathbb{R}^n }
	\left(
	\| \Delta_{ i+1 }^{ j+1 } \vecf \|_{ \mathbb{R}^n }
	-
	\| \Delta_i^j \vecf \|_{ \mathbb{R}^n }
	\right)
	,
\]
we obtain
\begin{align*}
	&
	\Delta_j \left(
	g_{ij,n} - \bar g_{ij,n}
	\right)
	\\
	& \quad
	=
	\left( \Delta_j \| \Delta_i^j \vecf \|_{ \mathbb{R}^n } \right)
	A_j \left(
	\| \Delta_{ i+1 }^{ j+1 } \vecf \|_{ \mathbb{R}^n }
	-
	\| \Delta_i^j \vecf \|_{ \mathbb{R}^n }
	\right)
	+
	\left( A_j \| \Delta_i^j \vecf \|_{ \mathbb{R}^n } \right)
	\Delta_j
	\left(
	\| \Delta_{ i+1 }^{ j+1 } \vecf \|_{ \mathbb{R}^n }
	-
	\| \Delta_i^j \vecf \|_{ \mathbb{R}^n }
	\right)
	,
\end{align*}
and
\begin{align*}
	&
	\Delta_i \Delta_j \left(
	g_{ij,n} - \bar g_{ij,n}
	\right)
	\\
	& \quad
	=
	\left( \Delta_i \Delta_j \| \Delta_i^j \vecf \|_{ \mathbb{R}^n } \right)
	A_{ij} \left(
	\| \Delta_{ i+1 }^{ j+1 } \vecf \|_{ \mathbb{R}^n }
	-
	\| \Delta_i^j \vecf \|_{ \mathbb{R}^n }
	\right)
	\\
	& \quad \qquad
	+ \,
	\left( \Delta_j A_i \| \Delta_i^j \vecf \|_{ \mathbb{R}^n } \right)
	\Delta_i A_j \left(
	\| \Delta_{ i+1 }^{ j+1 } \vecf \|_{ \mathbb{R}^n }
	-
	\| \Delta_i^j \vecf \|_{ \mathbb{R}^n }
	\right)
	\\
	& \quad \qquad
	+ \,
	\left( \Delta_i A_j \| \Delta_i^j \vecf \|_{ \mathbb{R}^n } \right)
	\Delta_j A_i
	\left(
	\| \Delta_{ i+1 }^{ j+1 } \vecf \|_{ \mathbb{R}^n }
	-
	\| \Delta_i^j \vecf \|_{ \mathbb{R}^n }
	\right)
	\\
	& \quad \qquad
	+ \,
	\left( A_{ij} \| \Delta_i^j \vecf \|_{ \mathbb{R}^n } \right)
	\Delta_i \Delta_j
	\left(
	\| \Delta_{ i+1 }^{ j+1 } \vecf \|_{ \mathbb{R}^n }
	-
	\| \Delta_i^j \vecf \|_{ \mathbb{R}^n }
	\right)
	.
\end{align*}
We have already known
\begin{align*}
	\left|
	A_{ij} \left(
	\| \Delta_{ i+1 }^{ j+1 } \vecf \|_{ \mathbb{R}^n }
	-
	\| \Delta_i^j \vecf \|_{ \mathbb{R}^n }
	\right) \right)
	\leqq & \
	C \frac Lm | i - j | K \left( \dot { \vecf } , \frac { 2 L } { L_2 m } \right) ,
	\\
	\left|
	\Delta_i \Delta_j \| \Delta_i^j \vecf \|_{ \mathbb{R}^n }
	\right|
	\leqq & \
	\frac Lm \frac C { | i - j | }
	,
	\\
	\left| \Delta_j A_i \| \Delta_i^j \vecf \|_{ \mathbb{R}^n } \right|
	\leqq & \
	C \frac Lm
	,
	\\
	\left| A_{ij} \| \Delta_i^j \vecf \|_{ \mathbb{R}^n } \right|
	\leqq & \
	C \frac Lm | i - j |
	.
\end{align*}
Furthermore we have
\begin{align*}
	&
	\Delta_i
	\left(
	\| \Delta_{ i+1 }^{ j+1 } \vecf \|_{ \mathbb{R}^n }
	-
	\| \Delta_i^j \vecf \|_{ \mathbb{R}^n }
	\right)
	\\
	& \quad
	=
	-
	\frac { A_i \Delta_{ i+1 }^{ j+1 } \vecf \cdot \Delta_{ i+1 } \vecf }
	{ A_i \| \Delta_{ i+1 }^{ j+1 } \vecf \|_{ \mathbb{R}^n } }
	+
	\frac { A_i \Delta_i^j \vecf \cdot \Delta_i \vecf }
	{ A_i \| \Delta_i^j \vecf \|_{ \mathbb{R}^n } }
	\\
	& \quad
	=
	-
	\frac {
	A_i \left(
	\Delta_{ i+1 }^{ j+1 } \vecf - \Delta_i^j \vecf
	\right)
	\cdot \Delta_{ i+1 } \vecf }
	{ A_i \| \Delta_{ i+1 }^{ j+1 } \vecf \|_{ \mathbb{R}^n } }
	-
	\frac { A_i \Delta_i^j \vecf \cdot
	\left(
	\Delta_{ i+1 } \vecf - \Delta_i \vecf
	\right)
	}
	{ A_i \| \Delta_i^j \vecf \|_{ \mathbb{R}^n } }
	\\
	& \quad \qquad
	- \,
	\left(
	\frac 1
	{ A_i \| \Delta_{ i+1 }^{ j+1 } \vecf \|_{ \mathbb{R}^n } }
	-
	\frac 1
	{ A_i \| \Delta_i^j \vecf \|_{ \mathbb{R}^n } }
	\right)
	A_i \Delta_i^j \vecf \cdot \Delta_{ i+1 } \vecf
	.
\end{align*}
From estimates
\[
	\left|
	-
	\frac {
	A_i \left(
	\Delta_{ i+1 }^{ j+1 } \vecf - \Delta_i^j \vecf
	\right)
	\cdot \Delta_{ i+1 } \vecf }
	{ A_i \| \Delta_{ i+1 }^{ j+1 } \vecf \|_{ \mathbb{R}^n } }
	\right|
	\leqq
	C \frac Lm
	\frac { | i - j | K \left( \dot { \vecf } , \frac { 2 L } { L_2 m } \right) } { | i - j | }
	=
	C \frac Lm K \left( \dot { \vecf } , \frac { 2 L } { L_2 m } \right)
	,
\]
\[
	\left|
	-
	\frac { A_i \Delta_i^j \vecf \cdot
	\left(
	\Delta_{ i+1 } \vecf - \Delta_i \vecf
	\right)
	}
	{ A_i \| \Delta_i^j \vecf \|_{ \mathbb{R}^n } }
	\right|
	\leqq
	C \| \Delta_{ i+1 } \vecf - \Delta_i \vecf \|_{ \mathbb{R}^n }
	\leqq
	C \frac Lm K \left( \dot { \vecf } , \frac { 2 L } { L_2 m } \right)
	,
\]
\begin{align*}
	&
	\left|
	- \,
	\left(
	\frac 1
	{ A_i \| \Delta_{ i+1 }^{ j+1 } \vecf \|_{ \mathbb{R}^n } }
	-
	\frac 1
	{ A_i \| \Delta_i^j \vecf \|_{ \mathbb{R}^n } }
	\right)
	A_i \Delta_i^j \vecf \cdot \Delta_{ i+1 } \vecf
	\right|
	\\
	& \quad
	\leqq
	\left|
	\frac { A_i 
	\left(
	\| \Delta_{ i+1 }^{j+1 } \vecf \|_{ \mathbb{R}^n }
	-
	\| \Delta_i^j \vecf \|_{ \mathbb{R}^n }
	\right)
	}
	{
	A_i \| \Delta_{ i+1 }^{ j+1 } \vecf \|_{ \mathbb{R}^n }
	A_i \| \Delta_i^j \vecf \|_{ \mathbb{R}^n }
	}
	\right|
	\| A_i \Delta_i^j \vecf \|_{ \mathbb{R}^n }
	\| \Delta_{ i+1 } \vecf \|_{ \mathbb{R}^n }
	\\
	& \quad
	\leqq
	C \frac Lm
	\frac { | i - j | K \left( \dot { \vecf } , \frac { 2 L } { L_2 m } \right) | i - j | } { | i - j | }
	=
	C \frac Lm K \left( \dot { \vecf } , \frac { 2 L } { L_2 m } \right)
\end{align*}
we obtain
\[
	\left|
	\Delta_i
	\left(
	\| \Delta_{ i+1 }^{ j+1 } \vecf \|_{ \mathbb{R}^n }
	-
	\| \Delta_i^j \vecf \|_{ \mathbb{R}^n }
	\right)
	\right|
	\leqq
	C \frac Lm K \left( \dot { \vecf } , \frac { 2 L } { L_2 m } \right)
	.
\]
Since
\begin{align*}
	&
	\Delta_i \Delta_j
	\left(
	\| \Delta_{ i+1 }^{ j+1 } \vecf \|_{ \mathbb{R}^n }
	-
	\| \Delta_i^j \vecf \|_{ \mathbb{R}^n }
	\right)
	\\
	& \quad
	=
	- \frac 12
	\frac {
	\left( \Delta_{ i+1 } \vecf - \Delta_i \vecf \right)
	\cdot
	\left( \Delta_{ j+1 } \vecf + \Delta_j \vecf \right)
	+
	\left( \Delta_{ i+1 } \vecf - \Delta_i \vecf \right)
	\cdot
	\left( \Delta_{ j+1 } \vecf - \Delta_j \vecf \right)
	}
	{ A_{ij} \| \Delta_{ i+1 }^{ j+1 } \vecf \|_{ \mathbb{R}^n } }
	\\
	& \quad \qquad
	+ \,
	\left(
	\frac 1
	{ A_{ij} \| \Delta_i^j \vecf \|_{ \mathbb{R}^n } }
	-
	\frac 1
	{ A_{ij} \| \Delta_{ i+1 }^{ j+1 } \vecf \|_{ \mathbb{R}^n } }
	\right)
	\Delta_i \vecf \cdot \Delta_j \vecf
	\\
	& \quad \qquad
	- \,
	\frac 12
	\frac {
	\left\{ \Delta_i A_j
	\left(
	\| \Delta_{ i+1 }^{ j+1 } \vecf \|_{ \mathbb{R}^n }
	-
	\| \Delta_i^j \vecf \|_{ \mathbb{R}^n }
	\right) \right\}
	\left\{ \Delta_j A_i
	\left(
	\| \Delta_{ i+1 }^{ j+1 } \vecf \|_{ \mathbb{R}^n }
	+
	\| \Delta_i^j \vecf \|_{ \mathbb{R}^n }
	\right) \right\}
	}
	{ A_{ij} \| \Delta_{ i+1 }^{ j+1 } \vecf \|_{ \mathbb{R}^n } }
	\\
	& \quad \qquad
	- \,
	\frac 12
	\frac {
	\left\{ \Delta_i A_j
	\left(
	\| \Delta_{ i+1 }^{ j+1 } \vecf \|_{ \mathbb{R}^n }
	+
	\| \Delta_i^j \vecf \|_{ \mathbb{R}^n }
	\right) \right\}
	\left\{ \Delta_j A_i
	\left(
	\| \Delta_{ i+1 }^{ j+1 } \vecf \|_{ \mathbb{R}^n }
	-
	\| \Delta_i^j \vecf \|_{ \mathbb{R}^n }
	\right) \right\}
	}
	{ A_{ij} \| \Delta_{ i+1 }^{ j+1 } \vecf \|_{ \mathbb{R}^n } }
	\\
	& \quad \qquad
	+ \,
	\left(
	\frac 1
	{ A_{ij} \| \Delta_i^j \vecf \|_{ \mathbb{R}^n } }
	-
	\frac 1
	{ A_{ij} \| \Delta_{ i+1 }^{ j+1 } \vecf \|_{ \mathbb{R}^n } }
	\right)
	\left( \Delta_i A_j \| \Delta_i^j \vecf \|_{ \mathbb{R}^n } \right)
	\left( \Delta_j A_i \| \Delta_i^j \vecf \|_{ \mathbb{R}^n } \right)
	,
\end{align*}
it holds that
\[
	\left|
	\Delta_i \Delta_j
	\left(
	\| \Delta_{ i+1 }^{ j+1 } \vecf \|_{ \mathbb{R}^n }
	-
	\| \Delta_i^j \vecf \|_{ \mathbb{R}^n }
	\right)
	\right|
	\leqq
	\frac Lm \frac C { | i - j | }K \left( \dot { \vecf } , \frac { 2 L } { L_2 m } \right)
	.
\]
Combining estimates above,
we get
\[
	| \mathrm{J}_{132} | \leqq
	\frac C { | i - j |^2 } K \left( \dot { \vecf } , \frac { 2 L } { L_2 m } \right) .
\]
\par
To estimate $ \mathrm{J}_{14} $,
it is decomposed into 4 parts as
\begin{align*}
	\mathrm{J}_{14}
	= & \
	\mathrm{J}_{141} + \mathrm{J}_{142}
	,
	\\
	\mathrm{J}_{141}
	= & \
	\frac 12
	\left(
	\frac 1 { g_{ij,n}^2 }
	-
	\frac 1 { \bar g_{ij,n}^2 }
	\right)
	\left( \Delta_i g_{ij,n} \right)
	\left( \Delta_j g_{ij,n} \right)
	,
	\\
	\mathrm{J}_{142}
	= & \
	\frac 1 { 2 \bar g_{ij,n}^2 }
	\left\{
	\Delta_i \left( g_{ij,n} - \bar g_{ij,n} \right)
	\right\}
	\left( \Delta_j g_{ij,n} \right)
	,
	\\
	\mathrm{J}_{143}
	= & \
	\frac 1 { 2 \bar g_{ij,n}^2 }
	\left( \Delta_i \bar g_{ij,n} \right)
	\left\{
	\Delta_j \left( g_{ij,n} - \bar g_{ij,n} \right)
	\right\}
	,
	\\
	\mathrm{J}_{144}
	= & \
	\frac 1 { 2 \bar g_{ij,n}^2 }
	\left\{
	\left( \Delta_i \bar g_{ij,n} \right)
	\left( \Delta_j \bar g_{ij,n} \right)
	+
	4
	\left( \Delta_i^j \vecf \cdot \Delta_i \vecf \right)
	\left( \Delta_i^j \vecf \cdot \Delta_j \vecf \right)
	\right\}
	-
	\frac 1 { 2 g_{ij}^2 }
	.
\end{align*}
Since
\[
	\frac 1 { g_{ij,n}^2 }
	-
	\frac 1 { \bar g_{ij,n}^2 }
	=
	\frac 1 { \| \Delta_i^j \vecf \|_{ \mathbb{R}^n }^3 }
	\left(
	\frac 1 { \| \Delta_{ i+1 }^{ j+1 } \vecf \|_{ \mathbb{R}^n } }
	-
	\frac 1 { \| \Delta_i^j \vecf \|_{ \mathbb{R}^n } }
	\right)
	,
\]
we have
\[
	\left|
	\frac 1 { g_{ij,n}^2 }
	-
	\frac 1 { \bar g_{ij,n}^2 }
	\right|
	\leqq
	\left( \frac Lm \right)^{-4}
	\frac { C | i - j | K \left( \dot { \vecf } , \frac { 2 L } { L_2 m } \right) } { | i - j |^5 }
	=
	\left( \frac Lm \right)^{-4}
	\frac { C K \left( \dot { \vecf } , \frac { 2 L } { L_2 m } \right) } { | i - j |^4 }
	.
\]
The relation
\begin{align*}
	\Delta_i g_{ij,n}
	= 
	-
	\frac { A_i \Delta_i^j \vecf \cdot \Delta_i \vecf }
	{ A_i \| \Delta_i^j \vecf \|_{ \mathbb{R}^n } }
	A_i \| \Delta_{ i+1 }^{ j+1 } \vecf \|_{ \mathbb{R}^n }
	+
	A_i \| \Delta_i^j \vecf \|_{ \mathbb{R}^n }
	\frac { A_i \Delta_{ i+1 }^{ j+1 } \vecf \cdot \Delta_{ i+1 } \vecf }
	{ A_i \| \Delta_{ i+1 }^{ j+1 } \vecf \|_{ \mathbb{R}^n } }
\end{align*}
implies
\[
	| \Delta_i g_{ij,n} |
	\leqq
	C \left( \frac Lm \right)^2 | i - j |
	.
\]
Hence we obtain the estimate
\[
	| \mathrm{J}_{141} |
	\leqq
	\frac { CK \left( \dot { \vecf } , \frac { 2 L } { L_2 m } \right) } { | i - j |^2 }
	.
\]
It follows from
\begin{align*}
	&
	\Delta_i
	\left( g_{ij,n} - \bar g_{ij,n} \right)
	\\
	& \quad
	=
	-
	\frac { A_i \Delta_i^j \vecf \cdot \Delta_i \vecf }
	{ A_i \| \Delta_i^j \vecf \|_{ \mathbb{R}^n } }
	A_i \left( \| \Delta_{ i+1}^{ j+1 } \vecf \|_{ \mathbb{R}^n }
	- \| \Delta_i^j \vecf \|_{ \mathbb{R}^n } \right)
	\\
	& \quad \qquad
	+ \,
	\frac
	{ A_i \left( \| \Delta_{ i+1 }^{ j+1 } \vecf \|_{ \mathbb{R}^n }
	-
	\| \Delta_i^j \vecf \|_{ \mathbb{R}^n } \right) }
	{ A_i \| \Delta_{ i+1 }^{ j+1 } \vecf \|_{ \mathbb{R}^n } }
	A_i \Delta_{ i+1 }^{ j+1 } \vecf \cdot \Delta_i \vecf
	- 
	A_i \left( \Delta_{ i+1 }^{ j+1 } \vecf - \Delta_i^j \vecf \right) \cdot \Delta_i \vecf
\end{align*}
that
\[
	\left|
	\Delta_i
	\left( g_{ij,n} - \bar g_{ij,n} \right)
	\right|
	\leqq
	C \left( \frac Lm \right)^2 | i - j | K \left( \dot { \vecf } , \frac { 2 L } { L_2 m } \right)
	.
\]
Consequently
\[
	| \mathrm{J}_{142} |
	\leqq
	C \left( \frac Lm \right)^{-4} | i - j |^{-4}
	\left( \frac Lm \right)^2 | i - j | K \left( \dot { \vecf } , \frac { 2 L } { L_2 m } \right)
	\left( \frac Lm \right)^2 | i - j |
	=
	\frac { CK \left( \dot { \vecf } , \frac { 2 L } { L_2 m } \right) } { | i - j |^2 }
\]
holds.
We have
\[
	| \mathrm{J}_{143} |
	\leqq
	C \left( \frac Lm \right)^{-4} | i - j |^{-4}
	\left( \frac Lm \right)^2 | i - j |
	\left( \frac Lm \right)^2 | i - j | K \left( \dot { \vecf } , \frac { 2 L } { L_2 m } \right)
	=
	\frac { CK \left( \dot { \vecf } , \frac { 2 L } { L_2 m } \right) } { | i - j |^2 }
	.
\]
from
\[
	\Delta_i \bar g_{ij,n}
	=
	- 2 A_i \Delta_i^j \vecf \cdot \Delta_i \vecf
	.
\]
\par
Now we use
\begin{align*}
	&
	\frac 14
	\left( \Delta_i \bar g_{ij,n} \right)
	\left( \Delta_j \bar g_{ij,n} \right)
	+
	\left( \Delta_i^j \vecf \cdot \Delta_i \vecf \right)
	\left( \Delta_i^j \vecf \cdot \Delta_j \vecf \right)
	\\
	& \quad
	=
	\frac 14 \left( \frac { L_m } m \right)^2
	\left\{
	\left( \frac { L_m } m \right)^2
	+
	2 \Delta_i^j \vecf \cdot \left( \Delta_j \vecf - \Delta_i \vecf \right)
	\right\}
\end{align*}
to estimate $ \mathrm{J}_{144} $:
\[
	\sum_{ i \ne j } \mathrm{J}_{144}
	=
	\frac 12 \sum_{ i \ne j }
	\left[
	\frac 1 { \bar g_{ij,n}^2 }
	\left( \frac { L_m } m \right)^2
	\left\{
	\left( \frac { L_m } m \right)^2
	+
	2 \Delta_i^j \vecf \cdot \left( \Delta_j \vecf - \Delta_i \vecf \right)
	\right\}
	-
	\frac 1 { g_{ij}^2 }
	\right]
	.
\]
Let $ i < j \leqq i + \frac m2 $.
Then we have
\begin{align*}
	&
	\Delta_i^j \vecf \cdot ( \Delta_j \vecf - \Delta_i \vecf )
	\\
	& \quad
	=
	\sum_{ k=i }^{ j-1 } \Delta_k \vecf \cdot \sum_{ \ell = i }^{ j-1 } \Delta_\ell \Delta_\ell \vecf
	\\
	& \quad
	=
	\sum_{ k=i }^{ j-1 }	
	\sum_{ \ell=i }^{ j-1 }
	\left\{
	\Delta_k \vecf
	-
	\frac 12 \left( \Delta_\ell \vecf + \Delta_{ \ell + 1 } \vecf \right)
	\right\}
	\cdot \Delta_\ell \Delta_\ell \vecf
	\\
	& \quad
	=
	\frac 12
	\sum_{ k=i }^{ j-1 }	
	\sum_{ \ell=i }^{ j-1 }
	\left\{
	\left( \Delta_k \vecf - \Delta_\ell \vecf \right)
	+
	\left( \Delta_k \vecf - \Delta_{ \ell +1 } \vecf \right)
	\right\}
	\cdot \Delta_\ell \Delta_\ell \vecf
	\\
	& \quad
	=
	\frac 12
	\sum_{ k=i }^{ j-1 }	
	\sum_{ \ell=i }^{ j-1 }
	\left\{
	\mathrm{sgn} ( k - \ell )
	\sum_{ p = \min \{ k , \ell \} }^{ \max \{ k , \ell \}-1 } \Delta_p \Delta_p \vecf
	+
	\mathrm{sgn} ( k - \ell -1 )
	\sum_{ p = \min \{ k , \ell+1 \} }^{ \max \{ k , \ell+1 \}-1 } \Delta_p \Delta_p \vecf
	\right\}
	\cdot \Delta_\ell \Delta_\ell \vecf
	.
\end{align*}
Similar relation holds for $ i - \frac m2 \leqq j < i $.
Using
\begin{align*}
	\| \Delta_q \Delta_q \vecf \|_{ \mathbb{R}^n }
	= 
	\| \Delta_{ q+2 } \vecf - \Delta_{ q+1 } \vecf - \Delta_{ q+1 } \vecf + \Delta_q \vecf \|_{ \mathbb{R}^n }
	\leqq 
	\frac { CL } m K \left( \dot { \vecf } , \frac { 2 L } { L_2 m } \right)
	,
\end{align*}
We have
\[
	\|	\Delta_i^j \vecf \cdot ( \Delta_j - \Delta_i \|_{ \mathbb{R}^n }
	\leqq
	C \left( \frac Lm \right)^2 | i - j |^3 K \left( \dot { \vecf } , \frac { 2 L } { L_2 m } \right)^2
	.
\]
Consequently we obtain the decay 
\begin{align*}
	\sum_{ i \ne j } \frac 1 { \bar g_{ij,n} } \left( \frac { L_m } m \right)^2
	\|	\Delta_i^j \vecf \cdot ( \Delta_j - \Delta_i \|_{ \mathbb{R}^n }
	\leqq & \
	\sum_{ i \ne j } \frac { C | i - j |^3 } { | i - j |^4 } K \left( \dot { \vecf } , \frac { 2 L } { L_2 m } \right)^2
	\\
	\leqq & \
	C m \log m K \left( \dot { \vecf } , \frac { 2 L } { L_2 m } \right)^2 \to 0 \quad ( m \to \infty )
	.
\end{align*}
Furthermore we have
\begin{align*}
	\left|
	\sum_{ i \ne j }
	\left\{
	\frac 1 { 2 \bar g_{ij,n}^2 } \left( \frac { L_m } m \right)^4
	-
	\frac 1 { 2 g_{ij}^2 }
	\right\}
	\right|
	= & \
	\frac 12 \sum_{ i \ne j }
	\left( \frac { L_m } m \right)^4
	\left( \frac 1 { \bar g_{ij,n}^2 } - \frac 1 { g_{ij,n}^2 } \right)
	\\
	\leqq & \
	C
	\sum_{ i \ne j } \frac { K \left( \dot { \vecf } , \frac { 2 L } { L_2 m } \right) } { | i - j |^4 }
	\leqq
	C m K \left( \dot { \vecf } , \frac { 2 L } { L_2 m } \right)
	.
\end{align*}
If $ \vecf \in W^{1,1} ( \mathbb{R} / \mathbb{Z} ) $,
then $ m K \left( \dot { \vecf } , \frac { 2 L } { L_2 m } \right) $ is bounded unifornly on $ m $.
Thus Lebesgue's convergence theorem is applicable to
\[
	\sum_{ i \ne j } \left( \mathrm{J}_{12} + \mathrm{J}_{13} + \mathrm{J}_{141}
	+ \mathrm{J}_{142} + \mathrm{J}_{143} + \mathrm{J}_{144} \right)
	,
\]
and we find that it converges to $ 0 $ as $ m \to \infty $.
\qed
\end{proof}


\begin{thebibliography}{99}
\bibitem{BIN}
	S. Blatt, A. Ishizeki \& T. Nagasawa,
	{\it A M\"obius invariant discretization of O'Hara's M\"obius energy for knots},
	submitted.
\bibitem{FHW}
    M. H. Freedman, Z.-X. He \& Z. Wang,
    {\it M\"{o}bius energy of knots and unknots},
	Ann.\ of Math.\ (2) {\bf 139} (1) (1994),
	1--50.
\bibitem{IshizekiNagasawaI}
	A. Ishizeki \& T. Nagasawa,
	{\it A decomposition theorem of the M\"{o}bius energy I:
	Decomposition and M\"{o}bius invariance},
	Kodai Math.\ J. {\bf 37} (3) (2014), 
	737--754. 
\bibitem{IshizekiNagasawaIII}
	A. Ishizeki \& T. Nagasawa,
	{\it The invariance of decomposed M\"{o}bius energies under the inversions with center on curves},
	J. Knot Theory Ramifications {\bf 26} (2016),
	1650009,
	22 pp.
\bibitem{KimKusner}
	D. Kim \& R. Kusner,
	{\it Torus knots extremizing the M\"{o}bius energy},
	Experiment.\ Math. {\bf 2} (1) (1993),
	1--9.
\bibitem{OH}
	J. O'Hara,
	{\it Energy of a knot},
	Topology {\bf 30} (2) (1991),
	241--247. 
\bibitem{Rawdon-Simon}
	E. J. Rawdon \& J. K. Simon,
	{\it Polygonal approximation and energy of smooth knots},
	J. Knot Theory Ramifications {\bf 15} (4) (2006),
	429--451.
\bibitem{Scholtes}
	S. Scholtes,
	{\it Discrete M\"{o}bius energy},
	J. Knot Theory Ramifications {\bf 23} (2014),
	1450045,
	16 pp.
\bibitem{Simon}
	J. K. Simon,
	{\it Energy functions for polygonal knots},
	J. Knot Theory Ramifications {\bf 3} (3) (1994),
	299--320.
\end{thebibliography}
\end{document}